\theoremstyle{plain}
\newtheorem{theorem}{Theorem}[section]
\newtheorem{lemma}[theorem]{Lemma}
\theoremstyle{definition}
\theoremstyle{remark}
\newtheorem{remark}[theorem]{Remark}
\newcommand{\m}[2]{\ensuremath{M_{2^{#1}}^{#2}}}
\newcommand{\Z}{\ensuremath{\mathbb{Z}}}
\newcommand{\Cs}{\ensuremath{C_{r}^{5,s}}}
\newcommand{\Cse}{\ensuremath{C_{r,\epsilon}^{5,s}}}
\newcommand{\F}{\ensuremath{F^{s}_{r,\epsilon}}}
\title[2-local unstable homotopy groups of indecomposable $\mathbf{A}_3^2$ -complexes]{2-local unstable homotopy groups of indecomposable $\mathbf{A}_3^2$ -complexes}
\author[Z. Zhu]{Zhongjian Zhu}
\address{College of Mathematics and Physics, Wenzhou University, Wenzhou, Zhejiang \rm{325035}, China}
\email{zhuzhongjian@amss.ac.cn}
\author[J. Pan]{Jianzhong Pan}
\address{Hua Loo-Keng Key Mathematical Laboratory, Institute of Mathematics,Academy of Mathematics and Systems Science, Chinese Academy of Sciences,University of Chinese Academy of Sciences, Beijing, {\rm 100190}, China}
\email{pjz@amss.ac.cn}
\subjclass[2020]{Primary 55P15}
\keywords{Homotopy groups, $\mathbf{A}_{n}^2$-complexes, fibration sequence, relative James construction}
\begin{document}

\begin{abstract}
In this paper, we calculate the 2-local unstable homotopy groups of  indecomposable $\mathbf{A}_3^2$-complexes. The main technique used is  analysing the homotopy property of $J(X,A)$, defined by B. Gray for a CW-pair $(X,A)$,  which is homotopy equivalent to the homotopy fibre of the pinch map $X\cup CA\rightarrow \Sigma A$.
\end{abstract}
\maketitle

\tableofcontents

\section{Introduction}

\label{intro}

For a suspended  finite CW-complex $X$, if $X\simeq X_1\vee X_2$ and both $X_1$ and $X_2$ are not contractible, then $X$ is called decomposable; otherwise $X$ is called indecomposable. Let $\mathbf{A}_{n}^k$ be the  homotopy category consisting of $(n-1)$-connected finite CW-complexes with dimension less than or equal to $n+k$ $(n\geq k+1)$. The objects of  $\mathbf{A}_{n}^k$ are also called $\mathbf{A}_{n}^k$-complexes. In 1950, S.C.Chang classified the indecomposable homotopy types in $\mathbf{A}_{n}^2 (n\geq 3)$ \cite{RefChang}, that is
\begin{itemize}
	\item [(i)] Spheres:~~ $S^{n}$, $S^{n+1}$, $S^{n+2}$;
	\item [(ii)] Elementary Moore spaces:~~ $M_{p^{r}}^n$ , $M_{p^{r}}^{n+1}$  where $p$ is a prime, $r\in \mathbb{Z}^+$ and $M_{p^r}^{k}$ denotes $M(\Z/p^r, k)$, whose  only nontrivial reduced homology is  $\tilde{H}_{k}(M_{p^r}^{k})=\Z/p^r\Z$;
	\item [(iii)] Elementary Chang complexes:~~ $C_{\eta}^{n+2}$, $C^{n+2,s}$, $C_{r}^{n+2}$, $C_{r}^{n+2,s}$ ( $r,s\in \mathbb{Z}^+$), which are given by the mapping cones of the maps
	$\eta_{n}:S^{n+1}\rightarrow S^{n}$,  $f^s=j^{n+1}_1(2^s\iota_{n+1})+j^n_2\eta_{n}: S^{n+1}\rightarrow S^{n+1}\vee S^n$, $f_r=(\eta_{n}, 2^{r}\iota_{n}):S^{n+1}\vee S^n \rightarrow  S^n$, $f_r^s=(j^{n+1}_1(2^s\iota_{n+1})+j^n_2\eta_{n}, j^n_{2}(2^{r}\iota_{n})):S^{n+1}\vee S^n \rightarrow S^{n+1}\vee S^n$ respectively,
\end{itemize}
where $\mathbb{Z}^+$ denotes the set of positive integers; $\iota_n\in \pi_{n}(S^n)$ is the identity map of $S^n$; $\eta_{2}$ is the Hopf map $S^3\rightarrow S^2$ and  $\eta_{n}=\Sigma^{n-2}\eta_2$ for $n\geq 3$; $j^{n+1}_1$, resp. $j^n_2$ is the inclusion of $S^{n+1}$, resp. $S^n$, into $S^{n+1}\vee S^n$.

The suspension $\Sigma$ gives us sequences of functors $\mathbf{A}_{n}^k\xrightarrow{\Sigma}\mathbf{A}_{n+1}^k$ for all $n\geq k+1$.
The Freudenthal suspension theorem shows that these sequences stabilize in the sense that for $k+1<n$ the functor $\mathbf{A}_{n}^k\xrightarrow{\Sigma}\mathbf{A}_{n+1}^k$  is an  equivalence of additive categories. We point out that for $k+1=n$, the suspension functor $\mathbf{A}_{k+1}^k\xrightarrow{\Sigma}\mathbf{A}_{k+2}^k$ is a full representation equivalence, i.e. it is full, dense and reflects isomorphisms \cite{Drozd}, which implies that $\Sigma$ gives a 1-1 correspondence of homotopy types. Thus we often study   $\mathbf{A}_{k+1}^k$ as a beginning of the study of  $\mathbf{A}_{n}^k$ for $n\geq k+1$. There has been a lot of research on homotopy  of spheres and elementary Moore spaces, but only a few on homotopy  of all indecomposable  $\mathbf{A}_{n}^2$-complexes by taking them as a whole. In the 1950s, P.J.Hilton calculated the $n+1,n+2$-dim homotopy groups of $\mathbf{A}_{n}^2$-complexes \cite{Hilton1950,Hilton1951, Hiltonbook}. In 1985, H.J.Baues calculated the abelian groups $[X,Y]$  and  groups of homotopy equivalences $Aut(X)$  for all indecomposable $\mathbf{A}_{n}^2$-complexes $X$ and $Y$ \cite{BauJHC}. In 2017, the authors obtained the complete wedge decomposition of smash product $X\wedge Y$ for all indecomposable $\mathbf{A}_{n}^2$-complexes $X$ and $Y$ \cite{ZP}, and then as an application, we prove that the stable homotopy groups of elementary Chang complexes $C_{r}^{n+2,r}$ are direct summands of their unstable homotopy groups\cite{ZLP}. In 2020,  we obtained the local hyperbolicity, which is defined by R.Z.Huang and J.Wu to study the asymptotic behavior of the $p$-primary part of the homotopy groups of simply connected finite $p$-local complexes \cite{HuangRuiWu},  of $\mathbf{A}_{n}^2$-complexes by an analysis of decomposition of loop suspension \cite{ZPhyperbolicity}. In recent years, the problem of realisability of groups as self-homotopy equivalences of $\mathbf{A}_{n}^2$-complexes are studied by  C. Costoya, et al.\cite{C.Costoya}. Then  D.$\acute{M}$endez study the problem of realisability of rings as the ring of stable homotopy classes of self-maps of   $\mathbf{A}_{n}^2$-complexes \cite{D.Mendez}.

Calculating the unstable homotopy groups of finite CW-complexes is a fundamental and difficult problem in algebraic topology. A lot of related work \cite{X.G.Liu,Mukai:pi(CPn),Mukai,Toda,WJProjplane,JXYang} has been done on  CW-complexes with the number of cells less than or equal to 2, such as spheres, elementary Moore spaces, projective space and so on. J.Wu calculated the homotpy groups of mod 2 Moore spaces by using the  functorial decomposition \cite{WJProjplane} and recently, J.X.Yang, et al. calculate the homotopy groups of the suspended quaternionic projective plane in \cite{JXYang} by using the relative James construction.
Although calculating the  unstable homotopy groups of a CW-complex with the number of cells greater than 2 will be more complicated, we realize that it is possible to compute homotopy groups of $\mathbf{A}_{n}^2$-complex by similar method after reading their preprint \cite{JXYang}. In this paper, we will calculate the 6 and 7 dimensional unstable homotopy groups of indecomposable $\mathbf{A}_3^2$ -complexes. We should point out that for an  $\mathbf{A}_3^2$ -complex $X$,  the Freudenthal suspension theorem implies that $\pi_{m}(X)$ is in the stable range for $m\leq 4$, and by the calculation of \cite{Hiltonbook} so is $\pi_{5}(X)$ when $X$ is indecomposable. Hence the 6-dimensional homotopy group of an indecomposable $\mathbf{A}_3^2$ -complex (except  $M_{p^r}^4$) is its first unstable homotopy group. As a potential application, the  7-dimensional homotopy group of $\mathbf{A}_3^2$ -complexes may be used to study the classification problem of 2-connected 8-dimensional manifolds $M^8$, since the homotopy class of the attaching map  of its top cell is an element of $\pi_7(X)$, where $X$ is an  $\mathbf{A}_3^2$ -complex.

\begin{theorem} The $6,7$-homotopy groups of all $2$-loacl nontrivial  indecomposable $\mathbf{A}_3^2$-complexes are listed as follows:

	\begin{itemize}
		\item [$(1)$]\begin{align}
			&\pi_{6}(M_{2^r}^3) \cong \left\{
			\begin{array}{ll}
				\Z_4\oplus\Z_2, & \hbox{$r=1$;} \\
				\Z_8\oplus \Z_2\oplus\Z_2, & \hbox{$r=2$;} \\
				\Z_4\oplus\Z_2\oplus\Z_{2^r}, & \hbox{$r\geq 3$;}
			\end{array}
			\right. \nonumber \\
			&\pi_{7}(M_{2^r}^3)\cong \Z_2\oplus \Z_2\oplus (1-\epsilon_r)\Z_4;~~~~~~~~~~~~~~~~~~~~~~~~~
			\nonumber\\
			&\pi_{7}(M_{2^r}^4) \cong  \Z_{2^{min\{2,r-1\}}}\oplus\Z_{2^{r+1}}\oplus\Z_2.\nonumber
		\end{align}
		\item  [$(2)$] \begin{align}
			&\pi_{6}(C_{\eta}^5)\cong \Z_2;\nonumber\\
			&\pi_{6}(C_{r}^{5})\cong
			\Z_2\oplus (1-\epsilon_r)\Z_2\oplus\Z_{2^{r+\epsilon_r}};\nonumber\\
			&\pi_{6}(C^{5,s})\cong \Z_2\oplus\Z_2\oplus \Z_{2^s};\nonumber\\
			&\pi_{6}(\Cs)\cong\Z_2\oplus \Z_2\oplus(1-\epsilon_r)\Z_2\oplus\Z_{2^{min\{r,s\}}}\oplus\Z_{2^{r+\epsilon_r}}.\nonumber
		\end{align}
		\item  [$(3)$]\begin{align}
			&\pi_{7}(C_{\eta}^5)\cong \Z_{(2)};\nonumber\\
			&\pi_{7}(C_{r}^{5})\cong \Z_4\oplus \Z_{2^{r+1}};\nonumber\\
			&\pi_{7}(C^{5,s})\cong  \Z_{2^{min\{s,2\}}}\oplus\Z_{2^{s+2}}; \nonumber\\
			&\pi_{7}(C_{r}^{5,s})\cong
			\Z_{2^{min\{s-\epsilon_r,2\}}}\oplus\Z_{2^{min\{s+1,r+1\}}}\oplus\Z_{2^{s+2}}\oplus \Z_4, \nonumber
		\end{align}
	\end{itemize}
	where $\Z_{(2)}$ denotes the $2$-local integers and $\Z_{k}:=\Z/k\Z$.
\end{theorem}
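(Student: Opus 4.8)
The plan is to realise every $2$-local nontrivial indecomposable $\mathbf{A}_3^2$-complex $W$ as a mapping cone $W\simeq Y\cup_f CA$ in which $A$ is a single sphere, and then to run the long exact homotopy sequence of Gray's fibration
$$ J(Y,A)\longrightarrow W\xrightarrow{\ p\ }\Sigma A, $$
where $p$ is the pinch map and $J(Y,A)$ its homotopy fibre. Concretely I would use $M_{2^r}^3=S^3\cup_{2^r}CS^3$, $M_{2^r}^4=S^4\cup_{2^r}CS^4$, $C_\eta^5=S^3\cup_\eta CS^4$, $C^{5,s}=(S^4\vee S^3)\cup CS^4$, $C_r^5=M_{2^r}^3\cup_\eta CS^4$ and $\Cs=(M_{2^r}^3\vee S^4)\cup CS^4$, the last two wedge cases having their top cell attached by $2^s\iota_4+\eta_3$; in every case the base $\Sigma A$ is $S^4$ or $S^5$, whose $2$-local homotopy I read off from Toda's tables. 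Since the fibres occurring for the Chang complexes involve the Moore-space skeleta, I would settle part $(1)$ first and feed it into $(2)$ and $(3)$.

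The fibre is controlled by the relative James filtration $Y=J_1(Y,A)\subset J_2(Y,A)\subset\cdots$ with successive quotients $J_n/J_{n-1}\simeq Y\wedge A^{\wedge(n-1)}$. In the range $\ast\le 7$ only $J_1=Y$ and the bottom cells of $J_2/J_1=Y\wedge A$ can contribute, so $\pi_6$ and $\pi_7$ of $J(Y,A)$ are extracted from those of $Y$ — a sphere, a Moore space, or (for $C^{5,s}$ and $\Cs$) a wedge $S^4\vee S^3$ or $M_{2^r}^3\vee S^4$ which I split by Hilton's theorem — together with the bottom cell of $Y\wedge A$ and its attaching map. The decisive point is that the arithmetic content of $f$ (the maps $2^r$, $2^s$ and $\eta$) is carried not by the space $J(Y,A)$ but by the connecting homomorphism $\partial\colon\pi_\ast(\Sigma A)\to\pi_{\ast-1}(J(Y,A))$, computed as composition with the class $[f]\in\pi_\ast(Y)$ corrected by the relevant James--Hopf terms. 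For the Chang complexes $Y\wedge A$ sits in dimension $\ge 7$, so $J(Y,A)\simeq Y$ through dimension $6$ and the computation reduces to the homotopy of $Y$ (known once part $(1)$ is in hand) plus a single controlled correction in dimension $7$. For the Moore spaces $\dim(Y\wedge A)$ is smallest, equal to $6$ for $M_{2^r}^3$ and $8$ for $M_{2^r}^4$; in particular $M_{2^r}^3$ acquires a $6$-cell inside the range, and there one analyses the twisted fibre via the Serre spectral sequence together with the $2$-primary homotopy of $S^3$, which is exactly where the $r$-dependence of part $(1)$ is born.

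It then remains to read off $\pi_6$ and $\pi_7$ from
$$ \cdots\to\pi_m(J(Y,A))\to\pi_m(W)\to\pi_m(\Sigma A)\xrightarrow{\ \partial\ }\pi_{m-1}(J(Y,A))\to\cdots . $$
I expect the two genuine difficulties to be, first, the exact determination of $\partial$ and of the induced maps, which rests on a careful bookkeeping of $2$-primary compositions and Toda brackets among $\eta$, $\eta^2$, $\eta^3$, $\nu'$ and the degree maps $2^r,2^s$, including how $\eta$ interacts with the mod-$2^r$ degree maps on the Moore-space summands; and second, the resolution of the resulting group extensions, for instance distinguishing $\Z_8$ from $\Z_4\oplus\Z_2$ in $\pi_6(M_{2^r}^3)$ or pinning down the $\min\{r,s\}$- and $\epsilon_r$-summands of $\pi_6(\Cs)$ and $\pi_7(\Cs)$. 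These composition and extension phenomena, tracked uniformly in $r$ and $s$, are the technical heart of the argument and account for every case distinction appearing in the statement.
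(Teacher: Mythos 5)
Your overall strategy coincides with the paper's: realize each complex as a mapping cone, replace the homotopy fibre of the pinch map by Gray's relative James construction $J(Y,A)$, and chase the long exact sequence, with the relative James--Hopf invariant controlling part of $\partial$. The one genuinely different choice is your cell decomposition of the Chang complexes: you cone off a single $S^4$ from $M_{2^r}^3$, $S^4\vee S^3$ or $M_{2^r}^3\vee S^4$, so every fibration has base $S^4$ or $S^5$, whereas the paper takes the cofibre of $f^s_{r,\epsilon}:S^4\vee S^3\to S^4\vee S^3$ and fibres over $S^5\vee S^4$. This is a legitimate trade-off: your base contains no Whitehead products ($\pi_8(S^5)=\Z_8\{\nu_5\}$ replaces the summand $\Z_{(2)}\{[j_1^5,j_2^4]\}$ whose image under $(\partial^s_{r,1})_{7\ast}$ occupies Lemmas \ref{lem detemine x,y,t'}--\ref{lem Cokerpartial rs1} and the appendix), but the complexity migrates into the fibre, whose $8$-skeleton becomes $Y\cup_{[id_Y,g]}C(Y\wedge S^3)$ with $Y$ a Moore space or a wedge containing one; you would first need $\pi_7(M_{2^r}^3\vee S^4)$ and then the effect of the Whitehead-product attaching maps there, so neither route is obviously shorter.

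The genuine gap is in the determination of $\partial$ on classes that are not suspensions. Lemma~\ref{partial calculate1} computes $\partial$ only on the image of $\Sigma$, and the James--Hopf invariant $H_2'$ only detects the component of $\partial_{6\ast}(\nu_4)$ on the top cell $S^6$ of the fibre. For $\pi_6(M_{2^r}^3)$ the entire trichotomy $r=1,2,\ge 3$ is decided by the coefficient $y$ in $\partial_{6\ast}(\nu_4)=y\,j_{p_3}\nu'+2^rj^6_{p_3}$, which lives in the part of $\pi_6(F_{p_3})$ invisible to $H_2'$; the paper extracts it from the second Gray fibration via the map $\phi:J(M_f,X)\to\Omega\Sigma Y$ of Lemma~\ref{Diagram of phi} together with the identity $(-2^r\iota_4)\nu_4=2^{2r}\nu_4-2^{r-1}(2^r+1)\Sigma\nu'$. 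Your sketch supplies no substitute for $\phi$, and the Serre spectral sequence you invoke will not produce this coefficient: in the relevant range the fibre is simply $S^3\vee S^6$, independent of $r$, so all the $r$-dependence sits in the boundary map rather than in the fibre. The same issue recurs for $\partial_{7\ast}(\nu_4\eta_7)$ and, in your decomposition, for the Whitehead-product components of the fibre's attaching maps. Finally, the splittings of the resulting extensions (an order-$2$ lift of $\eta_4\eta_5$, the order-$4$ lift $\widetilde{\alpha_2}$ when $r=1$, the comparison maps between the fibrations for different $r,s$) rest on specific lifting results of Mukai and Morisugi--Mukai that a bookkeeping of Toda brackets does not by itself provide.
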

$\epsilon_r=\left\{
\begin{array}{ll}
	1, & \hbox{$r=1$;} \\
	0, & \hbox{$r\geq 2$}
\end{array}
\right.$ in the Theorem and  we also set $\epsilon_{\infty}=0$ when $r=\infty$ is allowed in the following text.

The proof of the first statement of the Theorem is given in Section 3 and the  remaining proofs are given in Section \ref{subsec:pi6 C} and \ref{subsec:pi7 C} respectively.

\section{Some notations and  lemmas}

In this paper,  all spaces and maps are in the category of pointed CW-complexes and maps (i.e. continuous functions) preserving basepoint. And we always use $*$ and $0$ to denote the basepoints and the constant maps mapping to the basepoints respectively. We denote $A\hookrightarrow X$  as an inclusion map.

Let  $(X,A)$ be a pair of  spaces with base point $*\in A$, and suppose that $A$ is closed in $X$. In \cite{Gray}, B.Gray constructed a space $(X,A)_{\infty}$ analogous to the James construction, which is denoted by us as $J(X,A)$ to parallel with the the absolute James construction $J(X)$. In fact, $J(X,A)$ is the subspace of $J(X)$ of words for which letters after the first are in $A$.  Especially,  $J(X,X)=J(X)$. As parallel with the familiar symbol $J_{r}(X)$ which is the $r$-th filtration of $J(X)$, we denote the  $r$-th filtration of $J(X,A)$ by  $J_{n}(X,A):=J(X,A)\cap J_{r}(X)$, which is denoted by Gray as  $(X, A)_r$ in  \cite{Gray}.

For example, $J_{1}(X,A)=X$, $J_{2}(X,A)=(X\times A)/((a,\ast)\thicksim (\ast,a))$ for each $a\in A$. In fact there is a  pushout diagram for $r\geq 2$:
$$ \footnotesize{\xymatrix{
		X\times A^{n-1} \ar[r]^{\Pi_{r}} &  J_{r}(X, A) \\
		F \ar@{^{(}->}[u]\ar[r] & J_{r-1}(X, A)\ar@{^{(}->}[u]^{I_{r}} } }$$
where $F\subset X\times  A^{n-1}$ is the ``fat wedge" consisting of those points in which one or more coordinates is the base-point; $\Pi_{r}$ and $I_{r}$  are the projection and  the inclusion respectively and both of them are natural.

\begin{remark}$J_{n}(X,A)/J_{n-1}(X,A)$ is naturally homeomorphic to $(X\times A^{n-1})/F=X\wedge A^{\wedge (n-1)}$.
\end{remark}
It is well known that there is a natural weak homotopy equivalence  $\omega:J(X)\rightarrow \Omega\Sigma X$, which is a homotopy equivalence when $X$ is a finite CW-complex, and satisfies
$\footnotesize{\xymatrix{
		&X\ar@/_0.5pc/[rr]_{\Omega\Sigma}\ar@{^{(}->}[r]& J(X) \ar[r]^{\omega}& \Omega\Sigma X } }$,
where $X\xrightarrow{\Omega\Sigma} \Omega\Sigma X$ is the inclusion $x\mapsto \psi$ where $\psi: S^1\rightarrow S^1\wedge X, t\mapsto  t\wedge x .$

Let $X\xrightarrow{f}Y$ be a map.  We always use $C_f$, $F_f$ and  $M_f$ to denote the maping cone ( or say, cofibre ),  homotopy fibre and  mapping cylinder of $f$, $C_f\xrightarrow{p} \Sigma X$ the pinch map and $\Omega\Sigma X\xrightarrow{\partial}F_p\rightarrow C_f\xrightarrow{p}\Sigma X$ the homotopy fibration sequence induced by $p$ respectively. We get the relative James construction $J(M_f,X)$  (resp. $r$-th relative James construction $J_{r}(M_f,X)$ ) for the pair $(M_f,X)$.

\begin{lemma}\label{lemma Gray} Let $X\xrightarrow{f} Y$ be a map.  Then we have
	\begin{itemize}
		\item [(i)]  $F_{p}\simeq J(M_f,X)$;
		\item [(ii)] $\Sigma J(M_f,X)\simeq \bigvee_{k\geq 0}(\Sigma Y\wedge X^{\wedge k})$;$\Sigma J_{k}(M_f,X)\simeq \bigvee_{i= 0}^{k-1}(\Sigma Y\wedge X^{\wedge i})$;
		\item [(iii)] If $Y=\Sigma Y'$, $X=\Sigma X'$, then  $J_2(M_f,X)\simeq Y\cup_{\gamma}C(Y\wedge X')$, where $\gamma=[id_Y, f]$ is the generalized Whitehead product.
	\end{itemize}
\end{lemma}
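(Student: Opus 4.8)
The plan is to treat (i) and (ii) as direct consequences of Gray's construction and to reserve the real work for (iii). For (i) I would specialize the defining property of $J(X,A)$ recalled in the abstract, that $J(X,A)$ is the homotopy fibre of the pinch map $X\cup CA\to\Sigma A$, to the pair $(M_f,X)$. Since $X\hookrightarrow M_f$ is a cofibration and the deformation retraction $M_f\simeq Y$ carries it to $f$, we get $M_f\cup CX\simeq Y\cup_f CX=C_f$, and under this equivalence the pinch map $M_f\cup CX\to\Sigma X$ (collapsing $M_f$) corresponds to $p\colon C_f\to\Sigma X$ (collapsing $Y$). Hence $F_p\simeq J(M_f,X)$.

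For (ii) I would use the James-type filtration $\{J_n(M_f,X)\}$ and the Remark, which identifies its subquotients as $J_n(M_f,X)/J_{n-1}(M_f,X)\cong M_f\wedge X^{\wedge(n-1)}\simeq Y\wedge X^{\wedge(n-1)}$. Gray's relative James splitting, the relative analogue of $\Sigma\Omega\Sigma X\simeq\bigvee_{k\geq1}\Sigma X^{\wedge k}$ obtained by constructing combinatorial James--Hopf maps that split each cofibration $J_{n-1}\hookrightarrow J_n\to J_n/J_{n-1}$ after one suspension, then gives $\Sigma J(M_f,X)\simeq\bigvee_{n\geq1}\Sigma(Y\wedge X^{\wedge(n-1)})$; reindexing by $k=n-1$ yields the first formula, and applying the same splitting to the truncated filtration $J_k$ yields the second.

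The substance is (iii). First I would read off from the pushout square in the case $r=2$ that $J_2(M_f,X)$ is the pushout of $M_f\xleftarrow{\,g\,}M_f\vee X\hookrightarrow M_f\times X$: the fat wedge of two factors is $F=M_f\vee X$, and $g$ is the identity on $M_f$ together with the inclusion $X\hookrightarrow M_f$ on the second summand. As $F\hookrightarrow M_f\times X$ is a cofibration and $M_f\simeq Y$ carries $g$ to $(\mathrm{id}_Y,f)\colon Y\vee X\to Y$, this is, up to homotopy, the homotopy pushout of $Y\xleftarrow{(\mathrm{id}_Y,f)}Y\vee X\hookrightarrow Y\times X$. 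Since $Y=\Sigma Y'$ and $X=\Sigma X'$ are suspensions, the product has the standard cell structure $Y\times X\simeq(Y\vee X)\cup_{w}C(Y\wedge X')$, in which $w\colon Y\wedge X'=\Sigma(Y'\wedge X')\to Y\vee X$ is the universal Whitehead product $[\iota_Y,\iota_X]$; equivalently, the square with corners $Y\wedge X'$, $Y\vee X$, $C(Y\wedge X')$, $Y\times X$ and top map $w$ is a homotopy pushout.

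I would finish with the pasting law for homotopy pushouts: gluing this mapping-cone square on top of the square defining $J_2$ along their common edge $Y\vee X\hookrightarrow Y\times X$ shows that the outer rectangle, with corners $Y\wedge X'$, $Y$, $C(Y\wedge X')$, $J_2(M_f,X)$ and top map $g\circ w$, is again a homotopy pushout; therefore $J_2(M_f,X)\simeq Y\cup_{g\circ w}C(Y\wedge X')$. The hard part is the identification $g\circ w=\gamma$, which I would get from naturality of the generalized Whitehead product in the target: $g\circ[\iota_Y,\iota_X]=[g\circ\iota_Y,\,g\circ\iota_X]=[\mathrm{id}_Y,f]$, exactly the $\gamma$ claimed. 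Thus the two points that need care are the bookkeeping that converts the strict relative-James pushout into a homotopy pushout of suspensions, and this naturality statement that pins down the attaching map.
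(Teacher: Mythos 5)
Your argument is correct, but it does genuinely more than the paper, whose entire proof is the one-line citation ``follows from the Theorems of \cite{Gray} for $(M_f,X)$.'' Your parts (i) and (ii) amount to quoting exactly those theorems of Gray (the identification of $J(X,A)$ with the homotopy fibre of the pinch map $X\cup CA\to\Sigma A$, and the suspension splitting along the filtration quotients $J_n/J_{n-1}\cong X\wedge A^{\wedge(n-1)}$) and specializing them to the pair $(M_f,X)$ via $M_f\cup CX\simeq C_f$; the only cosmetic slip is calling the fibre property the ``defining property'' of $J(X,A)$ when it is a theorem, not the definition. Part (iii) is where you diverge: rather than citing Gray's description of $(X,A)_2$, you reprove it from the pushout square defining $J_2$, using that the fat wedge inclusion is a cofibration, that the retraction $M_f\to Y$ carries the fold-plus-inclusion map to $(\mathrm{id}_Y,f)$, that $Y\times X$ for suspensions is $(Y\vee X)\cup_{[\iota_Y,\iota_X]}C(Y\wedge X')$, and the pasting law together with naturality of the generalized Whitehead product in the target to identify the attaching map as $[\mathrm{id}_Y,f]$. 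This is a sound and essentially self-contained derivation; what it buys is transparency about exactly which standard facts are being used, at the cost of redoing work that the paper simply outsources to \cite{Gray}.
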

\begin{proof}
	The lemma follows from the Theorems of \cite{Gray} for $(M_f,X)$.
\end{proof}

Denote both the inclusion  $Y\hookrightarrow J_{2}(M_f,X)$ and the composition of the inclusions
$Y\hookrightarrow J_{2}(M_f,X)\hookrightarrow J(M_f,X) \simeq  F_{p}$ by $j_{p}$ without ambiguous.

\begin{lemma}\label{J(X,A)toJ(X',A')}
	Suppose the left  diagram  is commutative\\
	$ \footnotesize{\xymatrix{
			X\ar[d]^{\mu'}\ar[r]^-{f} & Y\ar[d]^{\mu}\\
			X'\ar[r]^-{f'} & Y'}}$;~~~~~~$ \footnotesize{\xymatrix{
			F_p\simeq  J(M_f,X)\ar[d]^{J(\widehat{\mu},\mu')}\ar[r]&M_f/X\simeq C_f  \ar[d]^{\bar{\mu}}\ar[r]& \Sigma X\ar[d]^{\Sigma\mu'}\\
			F_{p'}\simeq J(M_{f'},X')\ar[r]& M_{f'}/X'\simeq C_{f'} \ar[r]&\Sigma X'}}$
	
	then it induces the right commutative diagrams on fibrations, where $\widehat{\mu}$ satisfies
	
	$ \footnotesize{\xymatrix{
			&Y\ar@/^0.5pc/[rrr]^{\mu}\ar@{^{(}->}[r]_{\simeq}& M_f \ar[r]_{\widehat{\mu}}& M_{f'}\ar[r]_{\simeq}& Y'} }$.  Let
	
	$M_f=J_{1}(M_f,X)\xrightarrow{J(\widehat{\mu},\mu')|_{M_f}=J_{1}(\widehat{\mu},\mu')=\widehat{\mu}} J_{1}(M_{f'},X')=M_{f'}$,
	
	$J_{2}(M_f,X)\xrightarrow{J(\widehat{\mu},\mu')|_{J_{2}(M_f,X)}=J_{2}(\widehat{\mu},\mu')} J_{2}(M_{f'},X')$,
	
	then we have the following commutative diagram
	$$ \footnotesize{\xymatrix{
			Y\wedge X \ar[d]_{\simeq} \ar[r]^{\mu\wedge\mu'} & Y'\wedge X' \ar[d]_{\simeq}\\
			J_{2}(M_f,X)/J_{1}(M_f,X)\ar[r]^-{\overline{J_{2}(\widehat{\mu},\mu')}} & J_{2}(M_{f'},X')/J_{1}(M_{f'},X')} }$$
\end{lemma}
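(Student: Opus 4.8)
The plan is to deduce the claimed square entirely from the naturality of the defining pushout of the relative James construction, so that no genuine computation is needed. First I would recall from the pushout diagram that $J_2(M_f,X)$ carries the natural projection $\Pi_2\colon M_f\times X\to J_2(M_f,X)$ together with the natural inclusion $I_2\colon M_f=J_1(M_f,X)\hookrightarrow J_2(M_f,X)$, and that, by the Remark, the quotient $J_2(M_f,X)/J_1(M_f,X)$ is naturally homeomorphic to $(M_f\times X)/F=M_f\wedge X$, the homeomorphism being induced by $\Pi_2$. Since $\Pi_r$ and $I_r$ are natural in the pair, the map $J_2(\widehat{\mu},\mu')$ is exactly the map induced by $\widehat{\mu}\times\mu'\colon M_f\times X\to M_{f'}\times X'$ through the projections, i.e. $J_2(\widehat{\mu},\mu')\circ\Pi_2=\Pi_2'\circ(\widehat{\mu}\times\mu')$, where $\Pi_2'$ is the projection for the primed pair.

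Next I would pass to the quotients. A product of based maps sends the fat wedge $F\subset M_f\times X$ into $F'\subset M_{f'}\times X'$, since $\widehat{\mu}\times\mu'$ preserves the locus where some coordinate is the basepoint; hence $\widehat{\mu}\times\mu'$ descends to the smash map $\widehat{\mu}\wedge\mu'\colon M_f\wedge X\to M_{f'}\wedge X'$. Combining this with the identity $J_2(\widehat{\mu},\mu')\circ\Pi_2=\Pi_2'\circ(\widehat{\mu}\times\mu')$ and the naturality of the homeomorphism of the Remark, the induced map $\overline{J_2(\widehat{\mu},\mu')}$ on the quotients is precisely $\widehat{\mu}\wedge\mu'$ under the identifications $J_2(M_f,X)/J_1(M_f,X)\cong M_f\wedge X$ and $J_2(M_{f'},X')/J_1(M_{f'},X')\cong M_{f'}\wedge X'$.

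Finally I would translate from the mapping cylinders back to $Y$ and $Y'$. The left vertical equivalence of the claimed square is the composite $Y\wedge X\xrightarrow{\simeq}M_f\wedge X\cong J_2(M_f,X)/J_1(M_f,X)$, where the first arrow is the inclusion $Y\hookrightarrow M_f$ smashed with $\mathrm{id}_X$, and likewise on the primed side. Writing $\iota_Y\colon Y\hookrightarrow M_f$ and $\iota_{Y'}\colon Y'\hookrightarrow M_{f'}$ for these inclusions, the defining property of $\widehat{\mu}$ gives $\widehat{\mu}\circ\iota_Y=\iota_{Y'}\circ\mu$ (in fact strictly, since $\widehat{\mu}$ is defined on $M_f$ so as to extend $\mu$ on the $Y$-part). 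Therefore $(\widehat{\mu}\wedge\mu')\circ(\iota_Y\wedge\mathrm{id}_X)=(\widehat{\mu}\circ\iota_Y)\wedge\mu'=(\iota_{Y'}\circ\mu)\wedge\mu'=(\iota_{Y'}\wedge\mathrm{id}_{X'})\circ(\mu\wedge\mu')$, which is exactly the commutativity of the square, the bottom edge being $\overline{J_2(\widehat{\mu},\mu')}$ as identified above.

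The main obstacle I anticipate is not conceptual but a matter of careful bookkeeping: one must verify that the homeomorphism $J_2/J_1\cong M_f\wedge X$ of the Remark really is natural for maps of pairs, so that it intertwines $\overline{J_2(\widehat{\mu},\mu')}$ with $\widehat{\mu}\wedge\mu'$, and one must keep the homotopy equivalences $Y\simeq M_f$, $Y'\simeq M_{f'}$ and the relation $\widehat{\mu}\circ\iota_Y=\iota_{Y'}\circ\mu$ aligned throughout. Both points are forced by the functoriality already built into the pushout construction, so once these identifications are set up explicitly the verification is a short diagram chase.
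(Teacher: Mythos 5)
Your argument for the final square is correct and is essentially the intended one: the paper's own ``proof'' is a one-line citation to Gray's naturality of the relative James construction, whereas you spell out the bookkeeping — $J_2(\widehat{\mu},\mu')\circ\Pi_2=\Pi_2'\circ(\widehat{\mu}\times\mu')$, the fat wedge being preserved so that $\widehat{\mu}\times\mu'$ descends to $\widehat{\mu}\wedge\mu'$ on $J_2/J_1$, and the strict identity $\widehat{\mu}\circ\iota_Y=\iota_{Y'}\circ\mu$ coming from the mapping-cylinder construction. All of that is sound, and it is arguably more useful than the paper's citation, since it makes visible exactly which identifications are natural.

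However, your proposal only addresses the second conclusion of the lemma. The first conclusion — that the strictly commutative square induces the displayed commutative ladder of fibration sequences, with $J(\widehat{\mu},\mu')$ over $\bar{\mu}$ over $\Sigma\mu'$ — is not touched. That part does not follow from the pushout bookkeeping alone: it requires that Gray's (weak) equivalence $F_p\simeq J(M_f,X)$ is itself natural with respect to maps of pairs, i.e.\ that the square comparing $J(M_f,X)\to M_f/X\simeq C_f$ with the fibre inclusion $F_p\to C_f$ commutes up to homotopy compatibly for $f$ and $f'$. That is the substantive content of Gray's theorem being invoked, and it should either be cited explicitly or argued separately (e.g.\ by comparing the two fibration sequences over $\Sigma X$ and $\Sigma X'$ and using the naturality of the pinch maps). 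With that citation added, your write-up is complete; without it, the lemma as stated is only half proved.
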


\begin{proof}
	The above lemma is easily obtained from \cite{Gray}.
\end{proof}

The following Lemma \ref{partial calculate1} to Lemma \ref{relate partial to H2} come  from \cite{JXYang} in original or generalized form.
\begin{lemma}\label{partial calculate1}
	Let $X\xrightarrow{f}Y$ be a map. Then the following diagram is homotopy commutative
	$$ \footnotesize{\xymatrix{
			X\ar[d]_{\Omega\Sigma } \ar[r]^{f} & Y \ar@{^{(}->}[d]\\
			\Omega\Sigma X \ar[r]^{\partial} & F_p } }$$
\end{lemma}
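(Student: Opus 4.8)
The plan is to deduce the homotopy-commutativity by naturality, comparing the given map $f$ with the \emph{universal} case of an identity map, for which the statement is transparent. Throughout write $u=\Omega\Sigma\colon X\to\Omega\Sigma X$ for the unit and $j_{p}\colon Y\hookrightarrow F_{p}$ for the inclusion introduced just before Lemma \ref{J(X,A)toJ(X',A')}; thus the assertion to be proved is $\partial\circ u\simeq j_{p}\circ f$.

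First I would apply Lemma \ref{J(X,A)toJ(X',A')} to the commutative square comparing the top map $\mathrm{id}_{X}\colon X\to X$ with the bottom map $f\colon X\to Y$, via the vertical maps $\mu'=\mathrm{id}_{X}$ and $\mu=f$. Since $M_{\mathrm{id}_{X}}\simeq X$ with the same subspace $X$, the source pair produces the absolute James construction $J(M_{\mathrm{id}_{X}},X)\simeq J(X)$, while the cone $C_{\mathrm{id}_{X}}\simeq\ast$ is contractible; hence the source fibration is the path--loop fibration $\Omega\Sigma X\to\ast\to\Sigma X$, whose connecting map $\partial_{\mathrm{id}}\colon\Omega\Sigma X\to F_{p_{\mathrm{id}}}$ is a homotopy equivalence. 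As $\Sigma\mu'=\mathrm{id}_{\Sigma X}$, the induced self-map of the base is the identity, so naturality of the connecting map for the fibration sequence $\Omega\Sigma X\to F_{p}\to C_{f}\to\Sigma X$ (Lemma \ref{J(X,A)toJ(X',A')} extended one step to the left by looping the base) yields $\partial\simeq J(\widehat{\mu},\mu')\circ\partial_{\mathrm{id}}$.

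Next I would identify the model composite $\partial_{\mathrm{id}}\circ u$. Under the Gray equivalence $F_{p_{\mathrm{id}}}\simeq J(X)$ the map $\partial_{\mathrm{id}}$ is inverse to the standard James equivalence $\omega$, and since $\omega$ carries the bottom filtration inclusion $X=J_{1}(X)\hookrightarrow J(X)$ to the unit $u$, I obtain $\partial_{\mathrm{id}}\circ u\simeq\bigl(X=J_{1}(X)\hookrightarrow J(X)\bigr)$. Feeding this into the previous relation gives $\partial\circ u\simeq J(\widehat{\mu},\mu')\circ\bigl(J_{1}(X)\hookrightarrow J(X)\bigr)$. I would then invoke the filtration compatibility recorded in Lemma \ref{J(X,A)toJ(X',A')}: on the bottom filtration $J(\widehat{\mu},\mu')$ restricts to $J_{1}(\widehat{\mu},\mu')=\widehat{\mu}$, and the bottom-filtration inclusions are natural, so $J(\widehat{\mu},\mu')\circ\bigl(J_{1}(X)\hookrightarrow J(X)\bigr)=\bigl(J_{1}(M_{f},X)\hookrightarrow J(M_{f},X)\bigr)\circ\widehat{\mu}$. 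The right-hand inclusion $M_{f}=J_{1}(M_{f},X)\hookrightarrow J(M_{f},X)\simeq F_{p}$ is by definition $j_{p}$, and the defining property of $\widehat{\mu}$ makes it realise $f$ under the equivalences $M_{\mathrm{id}_{X}}\simeq X$ and $M_{f}\simeq Y$; hence $\partial\circ u\simeq j_{p}\circ f$, which is exactly the claimed homotopy-commutativity.

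The main obstacle will be the identification $\partial\simeq J(\widehat{\mu},\mu')\circ\partial_{\mathrm{id}}$: one must know that Gray's equivalences $J(M_{(-)},X)\simeq F_{p_{(-)}}$ are natural for maps of pairs \emph{and compatible with the connecting maps} of the two pinch fibrations, and that in the identity case this equivalence reduces to the classical $\omega$ of Lemma \ref{lemma Gray}. Both facts are built into Gray's construction and into Lemma \ref{J(X,A)toJ(X',A')}, but they are what the argument genuinely rests on; once they are granted, the remainder is bookkeeping with the James filtration.
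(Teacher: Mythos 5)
Your argument is correct and rests on the same two ingredients as the paper's own proof: the naturality of the relative James construction (your $J(\widehat{\mu},\mu')\colon J(M_{\mathrm{id}_X},X)\to J(M_f,X)$ is, after the identification $M_{\mathrm{id}_X}\simeq X$, exactly the map $J(i,\mathrm{id}_X)\colon J(X)\to J(M_f,X)$ induced by the inclusion of pairs $(X,X)\hookrightarrow (M_f,X)$ that the paper uses) together with Gray's Lemma 4.1 identifying $\partial$ with $\omega$ under the equivalence $F_p\simeq J(M_f,X)$. The paper assembles these into a single three-square diagram for the given $f$ rather than detouring through the identity map and the naturality of connecting maps of fibration sequences, but the content is essentially the same.
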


\begin{proof}
	We have the following homotopy-commutative diagram
	$$ \footnotesize{\xymatrix{
			&X\ar[dl]_{f}\ar@/^1pc/[rr]^{\Omega\Sigma}\ar@{_{(}->}[d]_{i} \ar@{^{(}->}[r]& J(X) \ar[r]^{\omega}\ar[d]_{J(i,id_X)}& \Omega\Sigma X\ar[d]^{\partial}\\
			Y\ar@{^{(}->}[r]^{\simeq }&M_f\ar@{^{(}->}[r] & J(M_f, X)\ar[r]^{\simeq }& F_p } }$$
	where the middle homotopy-commutative square comes from the naturality of the relative James construction and the  right homotopy-commutative square comes from Lemma 4.1 of \cite{Gray}.
	Thus the Lemma \ref{partial calculate1} is obtained.
	
\end{proof}

\begin{lemma}\label{Diagram of phi}
	Let $X\xrightarrow{f} Y \xrightarrow{i}C_f \xrightarrow{p} \Sigma X\xrightarrow{-\Sigma f} \Sigma Y $be a cofibration sequence. Then there is a homotopy commutative diagram with rows fibration sequences:
	$$ \footnotesize{\xymatrix{
			\Omega\Sigma X\ar@{=}[d]\ar[r]^-{\partial} &J(M_f,X)\ar[d]^{\phi}\ar[r] & C_f\ar@{_{(}->}[d]\ar[r]^{p}& \Sigma X \ar@{=}[d]& \\
			\Omega\Sigma X\ar[r]^{\Omega(-\Sigma f)} & \Omega\Sigma Y \ar[r] & J(C_{f},Y)\ar[r]&\Sigma X \ar[r]^-{-\Sigma f}&\Sigma Y}}$$
\end{lemma}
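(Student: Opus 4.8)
The plan is to exhibit both rows as the Barratt--Puppe fibration sequences of two consecutive pinch maps and to build the vertical maps from the fact that those pinch maps compose trivially.

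First I would identify the two rows. The top row is the fibration sequence of the pinch map $p\colon C_f\to\Sigma X$ of the cofibration $X\xrightarrow{f}Y$, whose fibre is identified by Lemma~\ref{lemma Gray}(i) as $F_p\simeq J(M_f,X)$. For the bottom row I would apply the same machinery to the inclusion $i\colon Y\hookrightarrow C_f$: its mapping cylinder satisfies $M_i\simeq C_f$, its cofibre is $C_i\simeq\Sigma X$, and the associated pinch map $p_i\colon\Sigma X\to\Sigma Y$ is, by the Barratt--Puppe sequence, homotopic to $-\Sigma f$. Lemma~\ref{lemma Gray}(i) then gives $F_{p_i}\simeq J(M_i,Y)\simeq J(C_f,Y)$, so the fibration sequence of $p_i$, extended one step to the left, reads $\Omega\Sigma X\xrightarrow{\Omega(-\Sigma f)}\Omega\Sigma Y\to J(C_f,Y)\xrightarrow{\rho}\Sigma X\xrightarrow{-\Sigma f}\Sigma Y$, which is precisely the bottom row. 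The crucial structural observation is that the portion $\Omega\Sigma Y\to J(C_f,Y)\xrightarrow{\rho}\Sigma X$ is the fibration $\rho\colon J(C_f,Y)\to\Sigma X$ with fibre $\Omega\Sigma Y$, and this sits over the \emph{same} base $\Sigma X$ as the top fibration $p\colon C_f\to\Sigma X$. This already pins down the outer vertical maps as the stated identities on $\Omega\Sigma X$ and on $\Sigma X$, together with the labels $p$, $-\Sigma f$ and $\Omega(-\Sigma f)$.

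Next I would produce the middle map and $\phi$. Since $p$ and $p_i$ are consecutive in the cofibration sequence, $p_i\circ p\simeq\ast$, so $p$ lifts along $\rho$ to a map $C_f\to J(C_f,Y)$, and I would identify this lift with the canonical filtration inclusion $C_f=J_1(C_f,Y)\hookrightarrow J(C_f,Y)$. Concretely, restricting the fibre-to-total map $\rho$ to the first filtration gives the inclusion of $C_f$ into the cone $C_i$, which under the collapse of the contractible subcone $C_i\simeq C_f/Y=\Sigma X$ becomes the quotient map $C_f\to C_f/Y$, that is, exactly $p$; this verifies the rightmost interior square. Regarding the inclusion $C_f\hookrightarrow J(C_f,Y)$ as a fibrewise map over $\Sigma X$ from $(C_f\xrightarrow{p}\Sigma X)$ to $(J(C_f,Y)\xrightarrow{\rho}\Sigma X)$, I would define $\phi\colon J(M_f,X)=F_p\to\Omega\Sigma Y$ to be the induced map on fibres. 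A fibrewise map over a common base, covering the identity on $\Sigma X$, automatically induces a map of the entire Puppe fibration sequences: the induced map on $\Omega(\text{base})$ is $\Omega(\mathrm{id})$, giving the identity on $\Omega\Sigma X$, the induced map on fibres is $\phi$, and the bottom connecting map is forced to be $\Omega p_i=\Omega(-\Sigma f)$. The two remaining squares then commute up to homotopy by naturality, the leftmost reducing to $\phi\circ\partial\simeq\Omega(-\Sigma f)$.

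The main obstacle is the bookkeeping that turns this into a genuine map of fibration sequences with the stated labels rather than a zig-zag of equivalences: one must check that the abstractly chosen lift of $p$ agrees up to homotopy with the filtration inclusion $C_f\hookrightarrow J(C_f,Y)$, that the resulting fibre map $\phi$ is canonical, and that the sign and connecting-map conventions yield exactly $-\Sigma f$ and $\Omega(-\Sigma f)$. I expect to control all of these by invoking the naturality of Gray's construction in Lemma~\ref{J(X,A)toJ(X',A')}, applied to the commutative square with vertical maps $f$ and $i$, together with the compatibility of $\partial$ with suspension in Lemma~\ref{partial calculate1}, rather than re-deriving the identifications by hand.
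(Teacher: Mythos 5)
Your proposal is correct and follows essentially the same route as the paper: both identify the bottom row as the fibration sequence of the pinch map of the cofibration $Y\xrightarrow{i}C_f$, take the canonical (filtration) inclusion $C_f\hookrightarrow J(C_f,Y)$ as the lift of $p$ making the right square commute, and obtain $\phi$ as the induced map on fibres, with the remaining squares commuting by naturality. The only cosmetic difference is that the paper simply quotes Gray's Lemma 4.1 for the natural inclusion lifting $p$, whereas you re-verify this by restricting to the first filtration.
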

\begin{proof}
	As pointed out in  the proof of Lemma 4.1.of \cite{Gray}, there is a natural inclusion $C_f\hookrightarrow J(C_f,Y)$ lifting the inclusion
	$ \xymatrix{
		C_{f}\ar@/_1pc/[rr]_{p}\ar@{^{(}->}[r] &C_f\cup_{i}CY\ar[r]^{\simeq } &\Sigma X\\ }$
	The homotopy commutativity of the right square implies that there exists a map $J(M_f,X)\xrightarrow{\phi} \Omega\Sigma Y$ such that the left and the middle squares are homotopy commutative.
\end{proof}

Similar to the James-Hopf invariant, we define the $n$-th relative James-Hopf invariant
$$J(X,A)\xrightarrow{H_{n}} J(X\wedge A^{\wedge(n-1)}),~x_1x_2\dots x_t\mapsto\prod\limits_{1\leq x_1<x_2<\dots<x_t\leq n}(x_{i_{1}}\wedge x_{i_{2}}\wedge\dots\wedge x_{i_{t}}  )$$ which are natural for pairs.
$H_{n}(x_1x_2\dots x_n)=x_1\wedge x_2\wedge \dots\wedge x_n$ implies the following lemma

\begin{lemma}\label{lem for H2}
	Let $X\xrightarrow{f}Y$ be a map. Then the following diagram is homotopy commutative
	$$ \footnotesize{\xymatrix{
			J(M_f,X)\ar[d]_{H_{n}}& \ar@{_{(}->}[l]J_{n}(M_f,X) \ar[d]^-{pinch} \\
			J(M_f\wedge X^{\wedge (n-1)} )&M_f\wedge X^{\wedge (n-1)}=J_{n}(M_f,X)/ J_{n-1}(M_f,X)\ar@{_{(}->}[l]} }$$
\end{lemma}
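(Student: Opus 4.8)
The plan is to verify the square directly on the combinatorial level of the (relative) James construction, exploiting the explicit formula by which $H_n$ was defined on words; the reduced-product models make all the maps strict, so I expect to obtain on-the-nose commutativity, which in particular yields the asserted homotopy commutativity. Both routes around the square are maps $J_n(M_f,X)\to J(M_f\wedge X^{\wedge(n-1)})$, so it suffices to evaluate them on an arbitrary generator, represented by a word $w=x_1x_2\cdots x_t$ with $x_1\in M_f$, $x_2,\dots,x_t\in X$, and length $t\le n$.

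First I would compute the upper--left route. The top inclusion $J_n(M_f,X)\hookrightarrow J(M_f,X)$ is the identity on representatives, after which $H_n$ sends $w$ to the ordered product $\prod_{1\le i_1<\cdots<i_n\le t}\bigl(x_{i_1}\wedge\cdots\wedge x_{i_n}\bigr)$. The combinatorial heart of the matter is the displayed identity preceding the statement: a word of length $t\le n$ admits no strictly increasing length-$n$ index sequence when $t<n$, so the product is empty and $H_n(w)=\ast$; and it admits exactly the one sequence $(1,2,\dots,n)$ when $t=n$, so $H_n(w)=x_1\wedge x_2\wedge\cdots\wedge x_n$. Consequently $H_n$ carries $J_n(M_f,X)$ into the first filtration $J_1(M_f\wedge X^{\wedge(n-1)})=M_f\wedge X^{\wedge(n-1)}$ and annihilates $J_{n-1}(M_f,X)$.

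Next I would compare this with the lower--right route. By the Remark, $J_n(M_f,X)/J_{n-1}(M_f,X)$ is naturally homeomorphic to $M_f\wedge X^{\wedge(n-1)}$, and under this identification the pinch map sends $w$ to $\ast$ for $t<n$ and to $x_1\wedge x_2\wedge\cdots\wedge x_n$ for $t=n$; post-composing with the bottom inclusion $M_f\wedge X^{\wedge(n-1)}\hookrightarrow J(M_f\wedge X^{\wedge(n-1)})$ reproduces precisely the values found above. Thus both composites agree on every representative, and the square commutes strictly, hence up to homotopy.

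The one point requiring genuine care --- and the step I expect to be the main obstacle --- is upgrading this word-level computation to a statement about the actual continuous maps: one must know that the defining formula for $H_n$ is compatible with the identifications of the James construction (deletion of basepoint letters and the degeneracies), so that it descends to a well-defined continuous map, and that the resulting map on $J_n(M_f,X)$ genuinely factors through $J_n/J_{n-1}$ in a continuous fashion. This I would dispatch using the naturality of $H_n$ for pairs together with the pushout presentation of $J_r(M_f,X)$ recorded above; once these formal points are in place, the length-counting identity supplies everything that remains.
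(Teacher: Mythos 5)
Your proposal is correct and follows essentially the same route as the paper: the paper's entire justification is the one-line observation that $H_{n}(x_1x_2\dots x_n)=x_1\wedge x_2\wedge\dots\wedge x_n$ (with shorter words going to the basepoint), which is exactly the word-level computation you carry out in detail. Your additional care about well-definedness and continuity of $H_n$ on the quotient identifications is a reasonable elaboration of what the paper leaves implicit.
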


\begin{remark}
	
	By abuse of notion, $H_2$ also denotes the composition of the maps $\Omega\Sigma X \xrightarrow{\simeq} J(X)=J(X,X)\xrightarrow{H_2} J(X\wedge X)\xrightarrow{\simeq} \Omega\Sigma (X\wedge X)$, where $X$ is a CW-complex and let $H_2': F_p\simeq J(M_f,X)\xrightarrow{H_2} J(M_f\wedge X)\xrightarrow{\simeq} J(Y\wedge X)\simeq \Omega\Sigma (Y\wedge X)$.
\end{remark}

\begin{lemma}\label{relate partial to H2}
	Let $X\xrightarrow{f}Y$ be a map. Then the following diagram is homotopy commutative
	$$ \footnotesize{\xymatrix{
			\Omega\Sigma X\ar[d]_{H_2 } \ar[r]^{\partial} & F_p \ar[d]^{H'_2}\\
			\Omega\Sigma (X\wedge X) \ar[r]^-{\Omega\Sigma(f\wedge id_{A})} & \Omega\Sigma (Y\wedge X)~~. } }$$
\end{lemma}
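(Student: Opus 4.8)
The plan is to reduce the statement to the naturality of the relative James--Hopf invariant $H_2$ for pairs, combined with the identification of $\partial$ extracted from the proof of Lemma \ref{partial calculate1}. Throughout write $i\colon X\hookrightarrow M_f$ for the inclusion of the domain and $r\colon M_f\xrightarrow{\simeq}Y$ for the retraction of the mapping cylinder, so that $r\circ i=f$ on the nose, and let $\omega_{\,\cdot\,}$ denote the natural weak equivalence $J(-)\xrightarrow{\simeq}\Omega\Sigma(-)$ evaluated at the relevant space. Recall from the Remark preceding the statement that the absolute invariant is the composite $H_2=\omega_{X\wedge X}\circ H_2\circ\omega_X^{-1}$ (the inner $H_2$ being the relative one $J(X)=J(X,X)\to J(X\wedge X)$), and that by definition $H'_2=\omega_{Y\wedge X}\circ J(r\wedge\mathrm{id}_X)\circ H_2\circ e^{-1}$, where $e\colon J(M_f,X)\xrightarrow{\simeq}F_p$ is the equivalence of Lemma \ref{lemma Gray}(i) and $J(r\wedge\mathrm{id}_X)$ realises the equivalence $J(M_f\wedge X)\simeq J(Y\wedge X)$ induced by $M_f\simeq Y$. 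Here $A=X$, so $\mathrm{id}_A=\mathrm{id}_X$.

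First I would invoke the homotopy-commutative diagram displayed in the proof of Lemma \ref{partial calculate1}. Its right-hand square gives $\partial\circ\omega_X\simeq e\circ J(i,\mathrm{id}_X)$, where $J(i,\mathrm{id}_X)\colon J(X)=J(X,X)\to J(M_f,X)$ is induced by the morphism of pairs $(i,\mathrm{id}_X)\colon(X,X)\to(M_f,X)$; equivalently $\partial\simeq e\circ J(i,\mathrm{id}_X)\circ\omega_X^{-1}$. Substituting this and the formula for $H'_2$ and cancelling $e^{-1}\circ e$ yields
$$H'_2\circ\partial\simeq \omega_{Y\wedge X}\circ J(r\wedge\mathrm{id}_X)\circ H_2\circ J(i,\mathrm{id}_X)\circ\omega_X^{-1}.$$

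The heart of the argument is then the naturality of $H_2$ for pairs applied to $(i,\mathrm{id}_X)$, which supplies the square $H_2\circ J(i,\mathrm{id}_X)\simeq J(i\wedge\mathrm{id}_X)\circ H_2$. Inserting it and using functoriality of $J$ together with $r\circ i=f$ collapses the middle composite, since $J(r\wedge\mathrm{id}_X)\circ J(i\wedge\mathrm{id}_X)=J((r\circ i)\wedge\mathrm{id}_X)=J(f\wedge\mathrm{id}_X)$, giving
$$H'_2\circ\partial\simeq \omega_{Y\wedge X}\circ J(f\wedge\mathrm{id}_X)\circ H_2\circ\omega_X^{-1}.$$
Finally I would push $\omega_{Y\wedge X}$ through $J(f\wedge\mathrm{id}_X)$ by the naturality of $\omega$ for the map $f\wedge\mathrm{id}_X\colon X\wedge X\to Y\wedge X$, i.e.\ $\omega_{Y\wedge X}\circ J(f\wedge\mathrm{id}_X)\simeq\Omega\Sigma(f\wedge\mathrm{id}_X)\circ\omega_{X\wedge X}$; the remaining factor $\omega_{X\wedge X}\circ H_2\circ\omega_X^{-1}$ is precisely the absolute $H_2$ of the Remark, so $H'_2\circ\partial\simeq\Omega\Sigma(f\wedge\mathrm{id}_X)\circ H_2$, as required.

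The only genuinely delicate point is the naturality of $H_2$ for the morphism of pairs $(i,\mathrm{id}_X)\colon(X,X)\to(M_f,X)$: one must verify that $(i,\mathrm{id}_X)$ is indeed a map of pairs (it is, since $i$ restricts on the subspace to the inclusion $X\hookrightarrow M_f$) and that the induced square on relative James--Hopf invariants commutes, which is exactly where the ``natural for pairs'' property recorded before Lemma \ref{lem for H2} is used. Everything else is bookkeeping with the equivalences $\omega$ and $e$ and the strict identity $r\circ i=f$, so I expect the main obstacle to be the careful verification of this single naturality square rather than any substantial new construction.
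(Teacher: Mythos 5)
Your proposal is correct and follows essentially the same route as the paper: the paper's proof is a single homotopy-commutative diagram assembled from Lemma 4.1 of Gray (your identification of $\partial$ via $J(i,\mathrm{id}_X)$), the naturality of the relative James--Hopf invariant for the map of pairs $(X,X)\to(M_f,X)$, and the naturality of $\omega$ together with $M_f\simeq Y$. Your write-up merely makes explicit the bookkeeping that the paper leaves implicit in its diagram.
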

\begin{proof} By the Lemma 4.1 of \cite{Gray} and the naturality of the  2nd relative James-Hopf invariant, we have the following homotopy commutative diagram
	$$ \footnotesize{\xymatrix{
			\Omega\Sigma X\ar@{.>}[d]_{H_2}\ar@/^1pc/[rrr]^{\partial} & J(X)=J(X,X)\ar[l]^-{\simeq}\ar[r]_-{J(i)}\ar[d]_{H_2} & J(M_f,X)\ar[r]_-{\simeq} \ar[d]_{H_2}& F_p\ar@{.>}[d]_{H'_2}\\
			\Omega\Sigma(X\wedge X)\ar@/_1pc/[rrr]_{\Omega\Sigma(f\wedge id_{X})} &J\ar[l]_-{\simeq} (X\wedge X)\ar[r]^-{J(i\wedge id_{X})} & J(M_f\wedge X) \simeq \Omega\Sigma(M_f\wedge X) \ar[r]^-{\simeq} & \Omega\Sigma(Y\wedge X) } }$$
	We complete the proof.
\end{proof}

The following lemma comes from \cite{Cohen}
\begin{lemma}\label{stable exact seq}
	If $X\xrightarrow{f} Y$ is a map, $X$ is $n-1$ connected, $C_f$ is $m-1$ connected, the dimension of $W$ is less than or equal to $m+n-2$, then we have the exact sequence $$[W,X]\xrightarrow{f_{\ast}} [W,Y] \rightarrow [W,C_{f}].$$
\end{lemma}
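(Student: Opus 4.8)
The plan is to show the sequence of pointed sets $[W,X]\xrightarrow{f_*}[W,Y]\xrightarrow{i_*}[W,C_f]$ is exact, where $i\colon Y\to C_f$ is the inclusion from the cofibration sequence. The inclusion $\operatorname{im}(f_*)\subseteq\ker(i_*)$ is immediate, since $i\circ f\simeq *$ forces $i_*f_*[h]=[ifh]=*$ for every $h\colon W\to X$. The entire content therefore lies in the reverse inclusion $\ker(i_*)\subseteq\operatorname{im}(f_*)$, and I would reduce this to a connectivity statement. Let $F_i$ denote the homotopy fibre of $i$, with projection $\pi\colon F_i\to Y$, and let $\theta\colon X\to F_i$ be the canonical map determined by $f$ together with the null-homotopy $i\circ f\simeq *$, so that $\pi\circ\theta\simeq f$. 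Given $[g]\in\ker(i_*)$, i.e. a map $g\colon W\to Y$ with $i\circ g\simeq *$, a choice of null-homotopy lifts $g$ to some $\tilde g\colon W\to F_i$ with $\pi\circ\tilde g\simeq g$. If I can produce $h\colon W\to X$ with $\theta\circ h\simeq\tilde g$, then $f\circ h\simeq\pi\circ\theta\circ h\simeq\pi\circ\tilde g\simeq g$, so $[g]=f_*[h]$. Thus it suffices to show that $\theta_*\colon[W,X]\to[W,F_i]$ is surjective.

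The key step is to estimate the connectivity of $\theta$. First I would record the connectivity of $f$ itself: from the homology long exact sequence of the cofibration and the hypothesis that $\widetilde H_j(C_f)=0$ for $j<m$, the map $f_*\colon\widetilde H_j(X)\to\widetilde H_j(Y)$ is an isomorphism for $j\le m-2$ and an epimorphism for $j=m-1$, whence $H_j(M_f,X)=0$ for $j\le m-1$; the relative Hurewicz theorem (using that the spaces are simply connected in the intended range $n\ge 3$) then shows $f$, equivalently the pair $(M_f,X)$, is $(m-1)$-connected. Since $X$ is $(n-1)$-connected, the map $X\to *$ is $n$-connected. Now $C_f$ is the homotopy pushout of $Y\xleftarrow{f}X\to *$, while $F_i$ is precisely the homotopy pullback of $Y\xrightarrow{i}C_f\leftarrow *$; applying the Blakers–Massey homotopy excision theorem to this square shows that the comparison map $\theta\colon X\to F_i$ is $\bigl((m-1)+n-1\bigr)=(m+n-2)$-connected.

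It remains to convert this connectivity into the desired surjectivity. The homotopy fibre of $\theta$ is $(m+n-3)$-connected, so the obstructions to compressing $\tilde g\colon W\to F_i$ through $\theta$ lie in the groups $H^{j+1}\!\bigl(W;\pi_j(\operatorname{fib}\theta)\bigr)$, which vanish whenever $\pi_j(\operatorname{fib}\theta)=0$, i.e. for $j\le m+n-3$. The only possibly nonzero coefficient group occurs at $j=m+n-2$, contributing an obstruction in $H^{m+n-1}(W;-)$; but $\dim W\le m+n-2<m+n-1$ forces this group to vanish as well, so a lift $h$ exists, which would complete the argument. I expect the main obstacle to be the bookkeeping in the connectivity estimate — in particular extracting the connectivity of $f$ from that of $C_f$ and invoking Blakers–Massey with the correct numerical bound — together with the standing simple-connectivity hypotheses that legitimize the relative Hurewicz and obstruction-theoretic steps; the dimension bound $m+n-2$ is sharp precisely because the top obstruction sits one cohomological degree above $\dim W$.
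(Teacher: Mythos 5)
Your proof is correct. The paper itself gives no argument for this lemma --- it simply cites Cohen's \emph{Stable homotopy} notes --- and the route you take (relative Hurewicz to see that $f$ is $(m-1)$-connected, Blakers--Massey applied to the pushout square $Y\leftarrow X\rightarrow *$ to conclude that the comparison map $X\to F_i$ is $(m+n-2)$-connected, then the dimension hypothesis on $W$ to get surjectivity of $\theta_*$) is exactly the standard proof of this ``stable range'' exactness, with the numerology coming out right. The only blemish is the phrase asserting that the \emph{only} possibly nonzero coefficient group occurs at $j=m+n-2$: the groups $\pi_j(\operatorname{fib}\theta)$ may be nonzero for all $j\ge m+n-2$, but this is harmless since for every such $j$ the obstruction group $H^{j+1}(W;\pi_j(\operatorname{fib}\theta))$ vanishes because $j+1>\dim W$; equivalently, you could skip obstruction theory entirely and invoke the standard fact that a $k$-connected map induces a surjection on $[W,-]$ for CW-complexes $W$ of dimension at most $k$.
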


\begin{lemma}\label{Lem:Spliting}
	Let $p$ be a prime  and suppose that there is a commutative diagrams of short exact sequences of $p$-torsion  abelian groups with $s<r$
	
	$\small{\xymatrix{
			0 \ar[r]&B_1\ar[r]^-{i_2} & A_1 \ar[r]^{p_2}&\Z_{p^r}\ar[r]& 0\\
			0\ar[r]&B\ar[u]\ar[r]^-{i_1}& A\ar[u]^{f}\ar[r]^{p_1} &\Z_{p^s} \ar@_{(->}[u]_{J}\ar[r]& 0
	} }. \label{Diagram pi6(M2^1)to pi6(M2^r)}$
	
	If the characteristic  $ch(B_1)\leq p^s$ and  the bottom short exact sequence is split
	then so is the top.
\end{lemma}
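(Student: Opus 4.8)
The plan is to translate "splitting'' into the existence of a well-chosen preimage of a generator and then to manufacture such a preimage out of the splitting of the bottom row. Fix a generator $1$ of $\Z_{p^r}$. The top row splits if and only if there is an element $a_1\in A_1$ with $p_2(a_1)=1$ and $p^r a_1=0$: such an $a_1$ automatically has order exactly $p^r$ (its image already does), so $k\mapsto k\,a_1$ is a well-defined section $\Z_{p^r}\to A_1$. Hence it suffices to exhibit a single preimage of the generator that is annihilated by $p^r$.

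First I would pick an arbitrary $\tilde a\in A_1$ with $p_2(\tilde a)=1$ and set $\beta:=p^r\tilde a$. Since $p_2(\beta)=p^r\cdot 1=0$, exactness of the top row gives $\beta\in\operatorname{im} i_2$; in fact $\beta$ is independent of the choice of $\tilde a$, as two choices differ by some $i_2(c)$ with $p^r i_2(c)=i_2(p^r c)=0$ (using $p^r B_1=0$, which holds because $ch(B_1)\le p^s$ and $s<r$). The whole problem therefore reduces to proving $\beta=0$.

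Next I would feed in the splitting of the bottom row, which supplies $a\in A$ with $p_1(a)=1$ (a generator of $\Z_{p^s}$) and $p^s a=0$. Transporting $a$ by the middle vertical map and using commutativity of the right-hand square, $p_2(f(a))=J(p_1(a))=J(1)$; since $J$ is injective and $s<r$, this is a generator of the order-$p^s$ subgroup $p^{r-s}\Z_{p^r}$ of $\Z_{p^r}$, i.e.\ $p^{r-s}$ up to a prime-to-$p$ unit (a bookkeeping factor I would absorb by replacing $a$ with a suitable multiple). Meanwhile $p^s f(a)=f(p^s a)=0$. Both $f(a)$ and $p^{r-s}\tilde a$ map under $p_2$ to $p^{r-s}$, so their difference lies in $\operatorname{im} i_2$, say $p^{r-s}\tilde a-f(a)=i_2(d)$ with $d\in B_1$. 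The key computation is then
\[
\beta=p^r\tilde a=p^s\bigl(p^{r-s}\tilde a\bigr)=p^s\bigl(f(a)+i_2(d)\bigr)=p^s f(a)+i_2(p^s d)=0,
\]
where the first summand vanishes because $p^s a=0$ and the second because $ch(B_1)\le p^s$ forces $p^s B_1=0$. This gives $\beta=0$, hence the section, hence the splitting of the top row.

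The substance of the argument lives entirely in the final display, and this is the only place the two hypotheses are genuinely used: $s<r$ and $ch(B_1)\le p^s$ are precisely what guarantee that, after multiplying by $p^s$, both error contributions — the transported element $f(a)$ and the correcting term $d\in B_1$ — are annihilated. I expect the main obstacle to be conceptual rather than computational: one must hit on the right reformulation (a preimage of the generator killed by $p^r$) and realize that $p^{r-s}\tilde a$ should be compared against the transported splitting element $f(a)$ rather than $\tilde a$ itself. Once that comparison is set up, the verification is immediate, with only the mild unit bookkeeping for a general injection $J$ requiring any care.
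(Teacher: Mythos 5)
Your proof is correct, and it is exactly the "easy diagram chasing argument" the paper invokes without writing out: reduce splitting to finding a preimage of the generator killed by $p^r$, then compare $p^{r-s}\tilde a$ with the transported splitting element $f(a)$ and use $p^sB_1=0$ together with $s<r$ to kill the error term. The unit bookkeeping for the injection $J$ and the verification that $p^r\tilde a$ lands in $\operatorname{im} i_2$ are both handled properly, so nothing is missing.
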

\begin{proof}
	It follows from an easy diagram chasing argument.
\end{proof}

The following generators of homotopy groups of spheres after localization at 2  come from \cite{Toda}.
$\iota_n=[id]\in\pi_n(S^n)$; $\pi_{3}(S^2)=\Z_{(2)}\{\eta_2\}$;  $\pi_{n+1}(S^n)=\Z_2\{\eta_n\} (n\geq 3)$;
$\pi_{n+2}(S^n)=\Z_2\{\eta_n\eta_{n+1}\} (n\geq 3)$; $\pi_{6}(S^3)=\Z_4\{\nu'\}$;  $\pi_{7}(S^4)=\Z_4\{\Sigma\nu'\}\oplus \Z_{(2)}\{\nu_4\}$; $\pi_{n+3}(S^n)=\Z_8\{\nu_n\} (n\geq 5)$; $\pi_{7}(S^3)=\Z_2\{\nu'\eta_6\}$; $\pi_{8}(S^4)=\Z_2\{\Sigma \nu'\eta_7\}\oplus\Z_2\{\nu_4\eta_7\}$.

Throughout the paper, we will not distinguish a map and its homotopy class in many cases.

In the following  all spaces  are  2-local.  $2^r=0$ is allowed, in this case we denote $r=\infty$, i.e,  $2^{\infty}=0$,  $\Z_{2^{\infty}}=\Z_{0}=\Z_{(2)}$ (after 2-localization), $\Z_1=0$ (trivial group),  $min\{k,\infty\}=k$ for some integer $k$.

\section{Elementary Moore spaces}

In this section we calculate  $\pi_{n}(M_{2^r}^{3}) (n=6,7)$ and $\pi_{7}(M_{2^r}^{4})$. For $r=1,2,3$,  many homotopy groups of these Moore spaces have been calculated by J. Wu, J. Mukai, T. Shinpo, and X.G.Liu  in \cite{WJ Proj plane},\cite{Mukai}, and \cite{X.G.Liu} respectively.

There is a canonical cofibration sequence
\begin{align}
	S^k\xrightarrow{2^{r}\iota_k} S^{k}\xrightarrow{i_{k}} M_{2^r}^{k}\xrightarrow{p_{k}}  S^{k+1} \label{Cofiberation for Mr^k}
\end{align}
where $M_{2^{\infty}}^{k}=M_{0}^{k}=S^{k}\vee S^{k+1}$.

Let $\Omega S^{k+1}\xrightarrow{\partial}  F_{p_{k}}\rightarrow M_{2^r}^{k}\xrightarrow{p_{k}}  S^{k+1}$ be the homotpy fibration sequence.
By Lemma \ref{lemma Gray} we get $\Sigma F_{p_{k}}\simeq S^{k+1}\vee S^{2k+1}\vee\dots$.

\subsection{Calculating $\pi_{6}(M_{2^r}^{3})$}
For $M_{2^r}^{3}$, the 8-skeleton  $Sk_{8}(F_{p_{3}})\simeq S^3\bigcup_{\gamma}CS^5$ with $\Sigma\gamma=0$ and isomorphism $ [S^5, S^3]\xrightarrow{\Sigma }[S^6, S^4]$ implies that $\gamma=0$. Thus the 8-skeleton  $Sk_{8}(F_{p_{3}})\simeq S^3\vee S^6$. By  Lemma \ref{partial calculate1}, we have exact sequence with commutative squares
\begin{align} \small{\xymatrix{
			\pi_{7}( S^4)\ar[r]^-{ \partial_{6\ast}}&\pi_{6}(F_{p_3})\ar[r]&\pi_{6}(M_{2^r}^{3})\ar[r]^-{q_{r\ast}}&\pi_{6}(S^4)\ar[r]^-{ \partial_{5\ast}}& \pi_{5}(F_{p_3})\\
			\pi_{6}(S^3)\ar[r]^-{(2^r\iota_{3})_{\ast}}\ar[u]_{\Sigma } &\pi_{6}(S^3)\ar@{^{(}->}[u]_{\cong j_{p_3\ast}}&&\pi_{5}( S^3)\ar[r]^-{(2^r\iota_{3})_{\ast}=0}\ar[u]_{\cong\Sigma } &\pi_{5}(S^3)\ar@{^{(}->}[u]_{j_{p_3\ast}} }}  \label{exact pi6(M^3)}
\end{align}
$\pi_{5}(F_{p_3})=\Z_2\{j_{p_{3}}\eta_3\eta_4\}$;
$\pi_{6}(F_{p_3})=\Z_4\{j_{p_3}\nu'\}\oplus\Z_{(2)}\{j^6_{p_3}\}$,
where  $j^6_{p_{3}}: S^6\hookrightarrow Sk_8(F_{p_{3}})$ is the inclusions of the wedge summand $ S^6$.

By the right commutative square of (\ref{exact pi6(M^3)}), we get $Ker \partial_{5\ast}\cong \Z_2\{\eta_4\eta_5\}$.

Next calculate $Coker \partial_{6\ast}$ in (\ref{exact pi6(M^3)}).

By Lemma 4.5 of \cite{Toda}, $(k\iota_{n})\alpha= k\alpha$ for $\alpha\in \pi_{n}(S^3)$, then by the left commutative square of (\ref{exact pi6(M^3)})
\begin{align}
	\partial_{6\ast}(\Sigma \nu')=2^rj_{p_3}\nu'.
	\label{partial6 on Sigma nu'}
\end{align}

Lemma \ref{lem for H2}, Lemma \ref{relate partial to H2} for map $S^3\xrightarrow{2^r\iota_3}S^3$ give the following commutative diagram
\begin{align}
	\footnotesize{\xymatrix{
			\pi_7( S^4)\ar[d]_{H_2} \ar[r]^-{\partial_{6\ast}} & \pi_{6}(F_{p_3})\ar[d]_{H'_2 }&\pi_{6}(S^3\vee S^6)\!\ar@{_{(}->}[l]^{\cong}\ar[r]^-{Proj}&\!\!\pi_{6}(S^6)\ar[d]_{\Sigma \cong}\\
			\pi_{6}(\Omega\Sigma S^3\wedge S^3) ~~~\ar[r]^-{(\Omega\Sigma2^r\wedge \iota_3)_{\ast}} & \pi_{6}(\Omega\Sigma S^3\wedge S^3)\ar@{=}[rr]&&\! \pi_{7}(S^7) } }  \label{diagram H2,H2'M^3}
\end{align}
By the right commutative square, $H'_{2}(j^6_{p_3})=\iota_7$ for $j^6_{p_3}\in \pi_6(F_{p_3})$.
$H_{2}(\nu_{4})=\iota_7$ by Lemma 5.4 of \cite{Toda}.

Thus from the left commutative square of  (\ref{diagram H2,H2'M^3}), we get
\begin{align}
	\partial_{6\ast}(\nu_4)=yj_{p_3}\nu'+2^rj^6_{p_3} ~~\text{for some}~y\in\Z_4.
	\label{partial6 on nu4}
\end{align}
From Lemma \ref{Diagram of phi}, we have the following two (homotopy) commutative diagrams
\begin{align} \footnotesize{\xymatrix{
			& S^3 \ar@{_{(}->}[d]_{j_{p_3}}\ar@/^1pc/[dd]\\
			\Omega S^4\ar@{=}[d] \ar[r]&F_{p_3}\ar[r]\ar[d]_{\phi}& M_{2^r}^3\ar@{_{(}->}[d]\\
			\Omega S^4 \ar[r]^-{\Omega(\!-\!2^r\!\iota_4)}&\Omega S^4\ar[r] & J(M_{2^r}^3,S^3),
	} }~~ \footnotesize{\xymatrix{
			&\Z_4\{j_{p_3}\nu'\}\oplus\Z_{(2)}\{j^6_{p_3}\}\ar@{=}[d]\\
			\pi_{7}( S^4)\ar[rd]_-{P_1(-2^r\iota_4)_\ast} \ar[r]^-{{\partial_{6\ast}}} & \!\pi_{6}(F_{p_3})\ar[d]_{P_1\phi_{\ast} }\\
			& \Z_4\{\Sigma \nu'\} } }\label{Diagram phi j}
\end{align}
$P_1:\pi_{7}( S^4)=\Z_4\{\Sigma \nu'\}\oplus\Z_{(2)}\{\nu_{4}\}\rightarrow \Z_4\{\Sigma \nu'\}$ is the canonical projection.

By comparing the Homology $H_{3}(-;\Z)$, we get
\begin{align}
	\phi j_{p_3}\simeq h\Omega\Sigma: S^3\rightarrow \Omega S^4 =\Omega\Sigma S^3,  ~ h ~\text{is odd integer}. \label{phi j=dOmegaSigam}
\end{align}
$(-2^r\iota_4)_{\ast}( \nu_4)=2^{2r}\nu_4-2^{r-1}(2^r+1)\Sigma \nu'$ by Lemma \ref{lem[lota4,iota4]H(nu4)}.
\begin{align}
	&P_1\phi_{\ast}\partial_{6\ast}( \nu_4)=P_1(y\phi_{\ast}(j_{p_3}\nu')+ 2^r\phi_{\ast}(j^6_{p_3}))=hy\Sigma \nu'+2^rP_1\phi_{\ast}(j^6_{p_3})  ~~~~ (\text{By (\ref{partial6 on nu4}}))\nonumber\\
	&=(hy+2^rt)\Sigma \nu' ~~~~~~(\text{for some odd integer } t). \nonumber\\
	&P_1(-2^r\iota_4)_\ast(\nu_4)=P_1(2^{2r}\nu_4-2^{r-1}(2^r+1)\Sigma \nu')=-2^{r-1}(2^r+1)\Sigma \nu'.\nonumber
\end{align}
From  (\ref{Diagram phi j}), we get $hy\Sigma \nu'+2^rP_1\phi_{\ast}(j^6_{p_3})=-2^{r-1}(2^r+1)\Sigma \nu'$, thus
\begin{align}
	\!\!\!\!\!\!\footnotesize{\begin{tabular}{r|c|c|c|}
			\cline{2-4}
			& $ r=1$&$r=2$&$\infty\geq r\geq 3$ \\
			\cline{2-4}
			$\Z_4\ni y=$  &  $  \pm  1$ & $2$ &$0$\\
			\cline{2-4}
	\end{tabular}}~  \label{equation y}
\end{align}

From (\ref{partial6 on Sigma nu'}), (\ref{partial6 on nu4}), (\ref{equation y})
\begin{align}
	Coker \partial_{6\ast}\cong \frac{\Z_4\oplus \Z_{(2)}}{ \left \langle  (2^r, 0), (y, 2^r) \right \rangle}\cong \left\{
	\begin{array}{ll}
		\Z_4, & \hbox{$r=1$;} \\
		\Z_8\oplus \Z_2, & \hbox{$r=2$;}\\
		\Z_4\oplus \Z_{2^r}, & \hbox{$ r\geq 3$.}
	\end{array}
	\right.
	\label{Cokpartial6 M3}
\end{align}

\begin{align}
	\small{\xymatrix{
			0\ar[r]&Coker\partial_{6\ast} \ar[r]& \pi_{6}(M_{2^r}^3) \ar[r]^-{p_{3\ast}}&Ker\partial_{5\ast}\cong\Z_2\{\eta_4\eta_5\}\ar[r]& 0
	} }.  \label{exact pi6(Mr3)}
\end{align}
The above short exact sequence splits for $r=1$, since $\pi_{6}(M_{2}^3)\cong \Z_4\oplus\Z_2$ by \cite{WJProjplane}, i.e., there is an element  $\varsigma_1\in \pi_{6}(M_{2}^3)$ with order 2 such that $p_{3\ast}(\varsigma_1)=\eta_4\eta_5$. Hence by the Lemma 2.5 of \cite{Mukai}, for $r>1$, there is also an element  $\varsigma_r\in \pi_{6}(M_{2^{r}}^3)$ with order 2 such that $p_{3\ast}(\varsigma_r)=\eta_4\eta_5$. Thus short exact sequence (\ref{exact pi6(Mr3)}) splits for $r>1$.
\begin{align}
	\text{So}~~~~~~~~~ \pi_{6}(M_{2^r}^3)\cong \left\{
	\begin{array}{ll}
		\Z_4\oplus\Z_2, & \hbox{$r=1$;} \\
		\Z_8\oplus \Z_2\oplus\Z_2, & \hbox{$r=2$;} \\
		\Z_4\oplus\Z_2\oplus\Z_{2^r}, & \hbox{$r\geq 3$.}
	\end{array}
	\right. \nonumber
\end{align}

\subsection{Calculating $\pi_{7}(M_{2^r}^3)$}
Consider the following  diagram
\begin{align} \small{\xymatrix{
			\pi_{8}( S^4)\ar[r]^-{ \partial_{7\ast}}&\pi_{7}(F_{p_3})\ar[r]&\pi_{7}(M_{2^r}^{3})\ar[r]^-{q_{r\ast}}&\pi_{7}(S^4)\ar[r]^-{ \partial_{6\ast}}& \pi_{6}(F_{p_3})\\
			\pi_{7}(S^3)\ar[r]^-{(2^r\iota_{3})_{\ast}}\ar[u]_{\Sigma } &\pi_{7}(S^3)\ar@{^{(}->}[u]_{\cong j_{p_3\ast}}&&& }}  \label{exact pi7(M^3)}
\end{align}
where the first row is exact sequence, and the left square follows from Lemma \ref{partial calculate1}.
$\pi_{7}(F_{p_3})=\Z_2\{j_{p_3}\nu'\eta_6\}\oplus\Z_2\{j_{p_3}^6\eta_6\}$.\\
From (\ref{partial6 on Sigma nu'}), (\ref{partial6 on nu4}),
~~~~~~~~~~~$Ker\partial_{6\ast}=\left\{
\begin{array}{ll}
	\Z_2\{2\Sigma \nu'\}  & \hbox{$r=1$;} \\
	\Z_4\{\Sigma \nu'\} , & \hbox{$r\geq 2$ .}
\end{array}
\right.$

\begin{align}
	\partial_{7\ast}(\Sigma \nu'\eta_7)=j_{p_3}(2^r\iota_{3}) \nu'\eta_6=0
	\label{partial7 Sigma nu'eta7}
\end{align}
Assume  $\partial_{7\ast}(\nu_4\eta_7)=aj_{p_3}\nu'\eta_6+bj_{p_3}^6\eta_6$ with $a,b\in \Z_2$. By Lemma \ref{lem for H2} and Lemma \ref{relate partial to H2}, we get the following commutative diagrams

\begin{align}
	\footnotesize{\xymatrix{
			\pi_8( S^4)\ar[d]_{H_2} \ar[r]^-{\partial_{7\ast}} & \pi_{7}(F_{p_3})\ar[d]_{H'_2 }&\pi_{7}(S^3\vee S^6)\!\ar@{_{(}->}[l]^{\cong}\ar[r]^-{Proj}&\!\!\pi_{7}(S^6)\ar[d]_{\Sigma \cong}\\
			\pi_{7}(\Omega\Sigma S^3\wedge S^3) ~~~\ar[r]^-{(\Omega\Sigma2^r\wedge \iota_3)_{\ast}} & \pi_{7}(\Omega\Sigma S^3\wedge S^3)\ar@{=}[rr]&&\! \pi_{8}(S^7) } } \nonumber
\end{align}
\begin{align}
	&0=(\Omega\Sigma2^r\wedge \iota_3)_{\ast}H_2(\nu_4\eta_7)=H'_2\partial_{7\ast}(\nu_4\eta_7)=H'_2(aj_{p_3}\nu'\eta_6+bj_{p_3}^6\eta_6)=b\eta_7, \nonumber
\end{align}
which implies that  $b=0$.

Diagram (\ref{Diagram phi j}) induces the following commutative diagram
$$ \footnotesize{\xymatrix{
		\pi_{8}( S^4)\ar[dr]_-{(-2^r\iota_4)_\ast} \ar[r]^-{{\partial_{7\ast}}} & \!\pi_{7}(F_{p_3})=\Z_2\{j_{p_3}\nu'\eta_6\}\oplus\Z_2\{j^6_{p_3}\eta_6\}\ar[d]_{\phi_{\ast} }\\
		& \pi_{8}(S^4)=\Z_2\{\Sigma \nu'\eta_7\}\oplus\Z_2\{\nu_{4}\eta_7\} \xrightarrow{P_1}\Z_2\{\Sigma \nu'\eta_7\}} }$$
where $P_1:\Z_2\{\Sigma \nu'\eta_7\}\oplus\Z_2\{\nu_{4}\eta_7\}\rightarrow \Z_2\{\Sigma \nu'\eta_7\}$ is the canonical projection.
\begin{align}
	&P_1(-2^r\iota_4)_{\ast}(\nu_4\eta_7)=P_1((2^{2r}\nu_4-2^{r-1}(2^r+1)\Sigma \nu')\eta_7)
	=\epsilon_r \Sigma \nu'\eta_7.  \nonumber\\
	&=P_1\phi_\ast\partial_{7\ast}(\nu_4\eta_7)=P_1\phi_\ast(aj_{p_3}\nu'\eta_6)=a\Sigma\nu'\eta_7,\nonumber
\end{align}
Hence $a=\epsilon_r, \infty\geq r\geq 1$.                                              Thus
\begin{align}
	&\partial_{7\ast}(\nu_4\eta_7)=\epsilon_rj_{p_3}\nu'\eta_6, \label{partial7nu4eta7}
\end{align}
The diagram (3) of \cite{Mukai} induces the following commutative diagram for $r>1$
\begin{align}
	\! \!\! \! \!\small{\xymatrix{
			0\ar[r]&\Z_2\{j_{p_3}\nu'\eta_6\}\!\oplus\!\Z_2\{j_{p_3}^6\eta_6\}  \ar[r]& \pi_{7}(M_{2^r}^3) \ar[r]&\Z_4\{\Sigma \nu'\}\ar[r]& 0\\
			0\ar[r]&\Z_2\{j_{p_3}^6\eta_6\}\ar[u]\ar[r]& \pi_{7}(M_{2}^3)\ar[u]_{c_{4\ast}}\ar[r] &\Z_2\{2\Sigma \nu'\} \ar@^{(->}[u]_{\times 2}\ar[r]& 0
	} }. \label{Diagram pi6(M2^1)to pi6(M2^r)}
\end{align}

By \cite{WJProjplane}, $\pi_{7}(M_{2}^3)\cong \Z_2\oplus \Z_2$, hence from  Lemma \ref{Lem:Spliting}, the top short exact sequence splits. Thus
$\pi_{7}(M_{2^r}^3)\cong \Z_2\oplus \Z_2\oplus (1-\epsilon_r)\Z_4$ for $\infty\geq r\geq 1$.

\subsection{Calculating $\pi_{7}(M_{2^r}^{4})$}
For $M_{2^r}^{4}$,  by (iii) of Lemma \ref{lemma Gray}, the 8-skeleton  $Sk_{8}(F_{p_{4}})\simeq S^4\bigcup_{\gamma=[\iota_4, 2^r\iota_4]}C(S^4\wedge S^3)=S^4\bigcup_{2^r[\iota_4, \iota_4]}C S^7$. Then the cofibration sequence $S^7\xrightarrow{\gamma}S^4\stackrel{j_{p_4}}\hookrightarrow F_{p_4}\rightarrow S^8$ induces the following exact sequence by Lemma \ref{stable exact seq}:
\begin{align}
	\Z_{(2)}\{\iota_7\}=\pi_{7}(S^7)\xrightarrow{\gamma_{\ast}}\pi_{7}(S^4)=\Z_4\{\Sigma \nu'\}\oplus\Z_{(2)}\{\nu_{4}\}\xrightarrow{j_{p_4 \ast}}\pi_{7}(F_{p_4})\rightarrow 0
	\nonumber
\end{align}
with $\gamma_\ast(\iota_7)=(2^r[\iota_4,\iota_4])\iota_7=2^{r+1}\nu_4-2^r\Sigma\nu'$.

Consider the following exact sequence with commutative squares
\begin{align} \small{\xymatrix{
			\pi_{8}( S^5)\ar[r]^-{ \partial_{7\ast}}&\pi_{7}(F_{p_4})\ar[r]&\pi_{7}(M_{2^r}^{3})\ar[r]&\pi_{7}(S^5)\ar[r]^-{ \partial_{6\ast}}& \pi_{6}(F_{p_4})\\
			\pi_{7}(S^4)\ar[r]^-{(2^r\iota_{4})_{\ast}}\ar[u]_{\Sigma } &\pi_{7}(S^4)\ar@{->>}[u]_{ j_{p_4\ast}}&&\pi_{6}( S^4)\ar[r]^-{(2^r\iota_{4})_{\ast}=0}\ar[u]_{\cong\Sigma } &\pi_{6}(S^4)\ar@{^{(}->}[u]_{j_{p_4\ast}} }}   \label{exact pi7(M^4)}
\end{align}

The right commutative square in (\ref{exact pi7(M^4)}) implies $Ker\partial_6\ast\cong \Z_2$.

By the left commutative square in (\ref{exact pi7(M^4)}), we get
\begin{align}
	\partial_{7\ast}(\nu_5)&=\partial_{7\ast}(\Sigma\nu_4)=j_{p_{4}\ast}[(2^r\iota_4)\nu_4]=2^{2r}j_{p_4}\nu_4-2^{r-1}(2^r-1)j_{p_4}\Sigma\nu'.
	\label{partial7 on  nu5}
\end{align}
\begin{align}
	&Coker\partial_{7\ast}=\frac{\Z_4\{j_{p_4}\Sigma \nu'\}\oplus \Z_{(2)}\{j_{p_4}\nu_4\}}{ \left \langle  2^{r+1}j_{p_4}\nu_4-2^rj_{p_4}\Sigma\nu',  2^{2r}j_{p_4}\nu_4-2^{r-1}(2^r-1)j_{p_4}\Sigma\nu'  \right \rangle}\nonumber\\
	&\cong \frac{\Z_{(2)}\{a, b\}}{\left \langle  2^{r+1}b-2^ra,2^{2r}b-2^{r-1}(2^r-1)a, 4a  \right \rangle }=\frac{\Z_{(2)}\{a, b\}}{\left \langle  2^{r+1}b,2^{min\{r-1,2\}}a  \right \rangle }. \nonumber\\
	&= \Z_{2^{min\{2,r-1\}}}\{a\}\oplus\Z_{2^{r+1}}\{b\}.\nonumber
\end{align}
Since $\pi_{7}(M_{2}^4)\cong \Z_2\oplus\Z_4$ from Theorem 5.10.of \cite{WJProjplane},  the same argument in dealing with diagram (\ref{Diagram pi6(M2^1)to pi6(M2^r)}) implies that
\begin{align}
	\pi_{7}(M_{2^r}^4) \cong  Coker \partial_{7\ast}\oplus Ker\partial_{6\ast}\cong
	\Z_{2^{min\{2,r-1\}}}\oplus\Z_{2^{r+1}}\oplus\Z_2, r\geq 1. \label{pi7(Mr4)}
\end{align}

\section{Elementary Chang-complexes}

In this section, we calculate the 6,7-dimensional unstable homotopy groups of elementary  Chang-complexes in $\mathbf{A}_3^2$, i.e.  $\pi_{n}(C_{r}^5)$,  $\pi_{n}(C^{5,s})$,  $\pi_{n}(C_{r}^{5,s})$ for $n=6,7$. Note that  $\pi_{6}(C_{\eta}^5)=\Z_6$ and  $\pi_{7}(C_{\eta}^5)=\Z$ are given by Proposition 8.2 of \cite{Mukai:pi(CPn)}.

In the first Section we denote $j^{n+1}_1$, resp. $j^n_2$ as the canonical inclusion of $S^{n+1}$, resp. $S^n$, into $S^{n+1}\vee S^n$. In the following, for the  special case $n=3$, we simplify the notion $j_1=j^{4}_1$ and $j_2=j^{3}_2$.

\subsection{Fibration sequence and cofibration sequence}
\label{subsec. fibre cofibre}

In order to calculate homotopy groups in a unified and efficient way, we denote the space $\Cse$ which is a mapping cone of $f_{r,\epsilon}^s$, i.e., there is a  cofibration sequence
\begin{align}
	S^4\vee S^3\xrightarrow{f_{r,\epsilon}^s} S^4\vee S^3\xrightarrow{\lambda_{r,\epsilon}^s} \Cse \xrightarrow{q_{r,\epsilon}^s}  S^5\vee S^4
	\label{cofibre Cr^se}
\end{align}
where  $f_{r,\epsilon}^s=(j_1(2^s\iota_4)+\epsilon j_2\eta_3, j_2(2^r\iota_3))$,  $\epsilon=1$ or $0$; $\infty\geq r,s>0$.

Then $C_{r,1}^{5,\infty}=C_{r}^5\vee S^4$; $C_{\infty,1}^{5,s}=C^{5,s}\vee S^4$; $C_{r,1}^{5,s}=\Cs$;  $C_{\infty,0}^{5,s}=M_{2^s}^{4}\vee S^3\vee S^4$.

Note that $f_{r,\epsilon}^sj_1=j_1(2^s\iota_4)+\epsilon j_2\eta_3$; $f_{r,\epsilon}^sj_2=j_2(2^r\iota_3)$.

Let $\Omega(S^5\vee S^4)\xrightarrow{\partial_{r,\epsilon}^{s}} \F\rightarrow \Cse \xrightarrow{q_{r,\epsilon}^s} S^5\vee S^4$   be the homotpy fibration sequence, where $\F\simeq J( M_{f_{r,\epsilon}^s},  S^4\vee S^3)$. From Lemma \ref{lemma Gray}, $\Sigma \F \simeq S^4\vee S^5
\vee S^8\vee S^8\vee S^9\vee A^s_{r,\epsilon}$ where $A^s_{r,\epsilon}$ is a wedge of spheres with dimension $\geq 10$.
\begin{align}
	Sk_{8}(\F)\simeq J_{2}( M_{f_{r,\epsilon}^s},  S^4\vee S^3)=(S^4\vee S^3)\cup_{\gamma_{r,\epsilon}^s}C((S^4\vee S^3)\wedge (S^3\vee S^2))\nonumber
\end{align}
where $\gamma_{r,\epsilon}^s=[id_{S^4\vee S^3}, f_{r,\epsilon}^s]$.
Let
\begin{align}
	&\gamma^s_{r,\epsilon} |_{S^3\wedge S^3}: S^3\wedge S^3\xrightarrow{j_2\wedge j^3_1}(S^4\vee S^3)\wedge (S^3\vee S^2)\xrightarrow{\gamma^s_{r,\epsilon}}S^4\vee S^3;\nonumber\\
	\gamma^s_{r,\epsilon} |_{S^3\wedge S^3}&=\gamma^s_{r,\epsilon} (j_2\wedge j^3_1)=[id_{S^4\vee S^3}, f^s_{r,\epsilon}]( j_2\wedge j^3_1)=[id_{S^4\vee S^3}j_2, f^s_{r,\epsilon} j_1]\nonumber\\
	&=[j_2, j_1(2^s\iota_4)+\epsilon j_2\eta_3]=[j_2, j_1(2^s\iota_4)]+\epsilon[j_2, j_2\eta_3]=2^s[j_1,j_2].\nonumber\\
	\text{Similary,} &\nonumber\\
	\gamma^s_{r,\epsilon}  |_{S^4\wedge S^2}&=\gamma^s_{r,\epsilon}  (j_1\wedge j^2_2)=[id_{S^4\vee S^3}, f^s_{r,\epsilon}]( j_1\wedge j^2_2);\nonumber\\
	&=[id_{S^4\vee S^3} j_1, f^s_{r,\epsilon}j_2]=[j_1, j_2(2^r\iota_3)]=2^r[j_1,j_2];\nonumber\\
	\gamma^s_{r,\epsilon}  |_{S^3\wedge S^2}&=\gamma^s_{r,\epsilon}  (j_2\wedge j^2_2)=[id_{S^4\vee S^3} j_2, f^s_{r,\epsilon} j_2]=[j_2, j_2(2^r\iota_3)]=2^r[j_2,j_2]=0\nonumber\\
	\gamma^s_{r,\epsilon} |_{S^4\wedge S^3}&=\gamma^s_{r,\epsilon} (j_1\wedge j^3_1)=[id_{S^4\vee S^3}, f^s_{r,\epsilon}]( j_1\wedge j^3_1)=[id_{S^4\vee S^3}j_1, f^s_{r,\epsilon} j_1]\nonumber\\
	&=[j_1, j_1(2^s\iota_4)+\epsilon j_2\eta_3]=[j_1, j_1(2^s\iota_4)]+[j_1, \epsilon j_2\eta_3]\nonumber\\
	&=2^{s+1}j_1\nu_4-2^sj_1\Sigma \nu'+\epsilon[j_1,j_2]\eta_6.\nonumber
\end{align}
where $[j_1, j_1(2^s\iota_4)]=2^s[j_1,j_1]=2^sj_1[\iota_4,\iota_4]=2^sj_1(2\nu_4-\Sigma \nu')=2^{s+1}j_1\nu_4-2^sj_1\Sigma \nu'$; $[j_1, j_2\eta_3]=[j_1\Sigma\iota_3, j_2\Sigma\eta_2]=[j_1,j_2]\Sigma \iota_3\wedge \eta_2=[j_1,j_2]\eta_6$; $[j_2,j_1]=(-1)^{(3+1)(4+1)}[j_1,j_2]$ and $[j_2,j_2]=0$ by the injection $ \pi_{5}(S^4\vee S^3)\xrightarrow{\Sigma} \pi_{6}(S^5\vee S^4)$. Hence there is a cofibration sequence
\begin{align}
	S^5\vee S^6\vee S^6\vee S^7 \xrightarrow{\gamma_{r,\epsilon}^s} S^4\vee S^3   \xrightarrow{j^s_{r,\epsilon}} Sk_{8}\F   \xrightarrow{p_{r,\epsilon}^s} S^6\vee S^7\vee S^7\vee S^8 \label{Cof of Sk8Frse}
\end{align}
\begin{align}
	&Sk_{8}(\F)\simeq (S^4\vee S^3)\cup_{\gamma_{r,\epsilon}^s=(0,2^s[j_1,j_2], 2^r[j_1,j_2],\gamma^s_{r,\epsilon} |_{S^4\wedge S^3})}C(S^5\vee S^6\vee S^6\vee S^7)\nonumber\\
	\simeq & (S^4\vee S^3)\cup_{(\gamma^s_{r,\epsilon} |_{S^4\wedge S^3},2^s[j_1,j_2], 2^r[j_1,j_2])}C(S^6\vee S^6\vee S^7)\bigvee S^6 \nonumber.
\end{align}
Let $j_{S^6}:S^6\rightarrow Sk_{8}(\F)$ be the canonical inclusion  of the wedge summand $S^6$ of  $Sk_{8}(\F)$. Simplify the notation $j^{s}_{r,\epsilon}:=j_{q_{r,\epsilon}^s}$: $S^4\vee S^3\hookrightarrow Sk_{8}(\F)$ or  $S^4\vee S^3\hookrightarrow \F$.

\subsection{Calculating $\pi_{6}(C_{r}^{5}) $, $\pi_{6}(C^{5,s})$ and $\pi_{6}(\Cs)$}
\label{subsec:pi6 C}
In the following, $r$ and $s$ cannot be equal to $\infty$  at the same time, unless otherwise stated..

From Lemma \ref{partial calculate1}, we get the exact sequence with two commutative squares
\begin{align} \footnotesize{\xymatrix{
			\!\!\!\!\pi_{7}( S^5\vee S^4)\ar[r]^-{ (\partial_{r,\epsilon}^{s})_{6\ast}}&\pi_{6}(\F)\ar[r]&\pi_{6}(\Cse)\ar[r]^-{q_{r,\epsilon\ast}^s}&\pi_{6}( S^5\!\vee\! S^4)\ar[r]^-{ (\partial_{r,\epsilon}^{s})_{5\ast}}& \pi_{5}(\F)\\
			\!\!\!\!\pi_{6}( S^4\!\vee\! S^3)\ar[r]^-{f^s_{r,\epsilon\ast}}\ar[u]_{\Sigma } &\pi_{6}(S^4\!\vee\! S^3)\ar[u]_{j^{s}_{r,\epsilon\ast} }&&\pi_{5}( S^4\!\vee\! S^3)\ar[r]^-{f^s_{r,\epsilon\ast}}\ar[u]_{\cong\Sigma } &\pi_{5}(S^4\!\vee\! S^3)\ar[u]_{j^{s}_{r,\epsilon\ast} }^{\cong}} } \label{exact seq for pi6(Crse)}
\end{align}
\begin{align}
	&\pi_{5}( S^4\vee S^3)=\Z_2\{j_1\eta_4\}\oplus\Z_2\{j_2\eta_3\eta_4\};\nonumber\\
	&\pi_{6}( S^5\vee S^4)=\Z_2\{\Sigma j_1\eta_5\}\oplus\Z_2\{\Sigma j_2\eta_4\eta_5\}; \nonumber\\
	&\pi_{6}( S^4\vee S^3)=\Z_2\{j_1\eta_4\eta_5\}\!\oplus \!\Z_4\{j_2\nu'\}  \oplus \Z_{(2)}\{[j_1,j_2]\};\nonumber\\
	&\pi_{7}( S^5\vee S^4)=\Z_2\{  j^5_1\eta_5\eta_6\}\oplus \Z_4\{ j^4_2\Sigma \nu'\}  \oplus \Z_{(2)}\{ j^4_2\nu_4\}.\nonumber
\end{align}

By the right commutative square in (\ref{exact seq for pi6(Crse)})\\
$(\partial_{r,\epsilon}^{s})_{5\ast}(j^5_1\eta_5)=j^{s}_{r,\epsilon} f^s_{r,\epsilon} (j_1\eta_4)=j^{s}_{r,\epsilon} (j_1(2^s\iota_4)+\epsilon j_2\eta_3)\eta_4=\epsilon j^{s}_{r,\epsilon} j_2\eta_3\eta_4$.\\
$(\partial_{r,\epsilon}^{s})_{5\ast}(j^4_2\eta_4\eta_5)=j^{s}_{r,\epsilon} f^s_{r,\epsilon}(j_2\eta_3\eta_4)=j^{s}_{r,\epsilon} j_2(2^r\iota_3)\eta_3\eta_4=0$.

Thus $Ker (\partial_{r,\epsilon}^{s})_{5\ast} =\Z_2\{j^4_2\eta_4\eta_5\}$ for $\epsilon=1$.

\begin{lemma}\label{lem pi6Frse}
	$ \pi_6(\F)=\Z_2\{j_{r,\epsilon}^{s}j_1\eta_4\eta_5\}\oplus \Z_4\{j_{r,\epsilon}^{s}j_2\nu'\}  \oplus \Z_{2^{min\{r,s\}}}\{j_{r,\epsilon}^{s}[j_1,j_2]\} \oplus \Z_{(2)}\{j_{S^6}\iota_6\}$
\end{lemma}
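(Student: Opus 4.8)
The goal is to compute $\pi_6(\mathcal{F}^s_{r,\epsilon})$ where $\mathcal{F}^s_{r,\epsilon} \simeq J(M_{f^s_{r,\epsilon}}, S^4\vee S^3)$. We've established that the 8-skeleton is
$$Sk_8(\mathcal{F}^s_{r,\epsilon}) \simeq (S^4\vee S^3)\cup_{(\gamma|_{S^4\wedge S^3}, 2^s[j_1,j_2], 2^r[j_1,j_2])} C(S^6\vee S^6\vee S^7) \bigvee S^6.$$

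Since we only want $\pi_6$, and $\pi_6$ only depends on $Sk_8$ (actually $Sk_7$ suffices for $\pi_6$ by cellular approximation, but let me be careful), I should work with this skeleton.

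**The plan**

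The plan is to compute $\pi_6(Sk_8(\mathcal{F}^s_{r,\epsilon}))$ using the cofibration that builds this skeleton, and then argue that $\pi_6(\mathcal{F}^s_{r,\epsilon}) = \pi_6(Sk_8(\mathcal{F}^s_{r,\epsilon}))$ since cells of dimension $\geq 10$ (from $A^s_{r,\epsilon}$) don't affect $\pi_6$.

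The $S^6$ wedge summand contributes $\Z_{(2)}\{j_{S^6}\iota_6\}$ directly. So the main work is computing $\pi_6$ of the remaining piece
$$P := (S^4\vee S^3)\cup_{(\gamma|_{S^4\wedge S^3}, 2^s[j_1,j_2], 2^r[j_1,j_2])} C(S^6\vee S^6\vee S^7).$$

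First I would use the cofibration
$$S^6\vee S^6\vee S^7 \xrightarrow{\phi} S^4\vee S^3 \xrightarrow{} P$$
where $\phi = (\gamma|_{S^4\wedge S^3}, 2^s[j_1,j_2], 2^r[j_1,j_2])$, together with the exact sequence from Lemma \ref{stable exact seq} (the excerpt's stable exactness result) to get
$$\pi_6(S^6\vee S^6\vee S^7) \xrightarrow{\phi_\ast} \pi_6(S^4\vee S^3) \to \pi_6(P) \to 0.$$

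The key inputs are already computed: $\pi_6(S^4\vee S^3) = \Z_2\{j_1\eta_4\eta_5\}\oplus \Z_4\{j_2\nu'\}\oplus \Z_{(2)}\{[j_1,j_2]\}$, and the attaching maps. I would compute $\phi_\ast$ on the three generators of $\pi_6(S^6\vee S^6\vee S^7)$. The two $S^6$-cells attach via $2^s[j_1,j_2]$ and $2^r[j_1,j_2]$ respectively, so they contribute the relations $2^s[j_1,j_2] = 0$ and $2^r[j_1,j_2]=0$ in $\pi_6(P)$; hence the $[j_1,j_2]$ summand becomes $\Z_{2^{\min\{r,s\}}}$. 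The $S^7$-cell attaches via $\gamma|_{S^4\wedge S^3} = 2^{s+1}j_1\nu_4 - 2^s j_1\Sigma\nu' + \epsilon[j_1,j_2]\eta_6 \in \pi_7(S^4\vee S^3)$, but since I need $\phi_\ast$ on $\pi_6(S^7) = \pi_7(S^7)\cdot(\text{degree shift})$ — more precisely, this cell's contribution to $\pi_6$ comes from precomposing with $\eta_7$ or looking at the induced map on the relevant homotopy — I expect it to land in the already-killed or lower-order part and not destroy the $\Z_2\{j_1\eta_4\eta_5\}$ or $\Z_4\{j_2\nu'\}$ summands.

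**The main obstacle**

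The hard part will be handling the $S^7$-cell attached via $\gamma|_{S^4\wedge S^3}$ and verifying it imposes no further relations on $\pi_6$ beyond what the two $S^6$-cells already give. Since $\gamma|_{S^4\wedge S^3}$ lives in $\pi_7$, its effect on $\pi_6(P)$ operates through the boundary/Whitehead-product interaction, and I must check carefully that the $\Z_4\{j_2\nu'\}$ summand survives intact (it should, since the attaching data involves $j_1$, not $j_2$, in its leading terms) and that $\epsilon[j_1,j_2]\eta_6$ does not further reduce the torsion order of the $[j_1,j_2]$ summand. I expect a short argument—comparing with the already-computed cases $C_r^5$, $C^{5,s}$, or using naturality under the inclusions $j_1, j_2$—to pin down that the answer is exactly
$$\pi_6(\mathcal{F}^s_{r,\epsilon}) = \Z_2\{j^s_{r,\epsilon}j_1\eta_4\eta_5\}\oplus \Z_4\{j^s_{r,\epsilon}j_2\nu'\}\oplus \Z_{2^{\min\{r,s\}}}\{j^s_{r,\epsilon}[j_1,j_2]\}\oplus \Z_{(2)}\{j_{S^6}\iota_6\},$$
independent of $\epsilon$, as the statement asserts.
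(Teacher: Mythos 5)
Your proposal is correct and follows essentially the same route as the paper: apply the stable exact sequence of Lemma \ref{stable exact seq} to the cofibration (\ref{Cof of Sk8Frse}) defining $Sk_8(\F)$, note that the $S^6$ wedge summand splits off a $\Z_{(2)}\{j_{S^6}\iota_6\}$, and compute the cokernel of $\gamma^s_{r,\epsilon\ast}$ on $\pi_6(S^4\vee S^3)$, where only the two cells attached by $2^s[j_1,j_2]$ and $2^r[j_1,j_2]$ contribute. Your worry about the $S^7$-cell is moot for a cleaner reason than you give: its contribution to the image factors through $\pi_6(S^7)=0$, so it imposes no relation at all in this degree.
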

\begin{proof}
	From Lemma \ref{stable exact seq} and  the section $j_{S^6}:S^6\rightarrow Sk_{8}(\F)$, the cofibration sequence (\ref{Cof of Sk8Frse}) induces the following exact sequence
	$$ \xymatrix{
		\pi_6(S^5\vee S^6\vee S^6\vee S^7) \xrightarrow{\gamma_{r,\epsilon\ast}^s} \pi_{6}(S^4\vee S^3)\ar[r]^-{j_{r,\epsilon}^{s} } & \pi_6(\F)\ar@{->>}[r]& \pi_{6}(S^6)=\Z_{(2)}\{\iota_6\}\ar@/^1pc/[l]^{j_{S^6\ast}} }$$
	where $\pi_{6}(S^5\vee S^6\vee S^6\vee S^7)=\Z_2\{j'_1\eta_5\}\oplus \Z_{(2)}\{j'_2\iota_6\}\oplus \Z_{(2)}\{j'_3\iota_6\} $, $j'_k$ is the canonical inclusion of the $k$-th wedge summand of $S^5\vee S^6\vee S^6\vee S^7$;
	It is easy to get
	$\gamma_{r,\epsilon\ast}^s(j'_3\eta_5)=0$;$\gamma_{r,\epsilon\ast}^s(j'_2\iota_6)=2^s[j_1,j_2]$; $\gamma_{r,\epsilon\ast}^s(j'_3\iota_6)=2^r[j_1,j_2]$.\\
	Hence one gets $\pi_6(\F)$ by calculating $Coker \gamma_{r,\epsilon\ast}^s$.
\end{proof}
\begin{lemma}\label{lem Cokerpartial6rse}
	\begin{align}
		&Coker (\partial^s_{r,1})_{6\ast}\cong
		\Z_2\oplus (1-\epsilon_r)\Z_2\oplus\Z_{2^{min\{r,s\}}}\oplus\Z_{2^{r+\epsilon_r}}, \infty\geq r\geq 1.\nonumber
	\end{align}
\end{lemma}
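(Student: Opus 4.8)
The plan is to compute the image of $(\partial^s_{r,1})_{6\ast}\colon \pi_7(S^5\vee S^4)\to\pi_6(F^{s}_{r,1})$ on the three generators $j^5_1\eta_5\eta_6$, $j^4_2\Sigma\nu'$, $j^4_2\nu_4$ listed before (\ref{exact seq for pi6(Crse)}), and then to quotient the group $\pi_6(F^{s}_{r,1})$ of Lemma \ref{lem pi6Frse} by the subgroup so obtained.

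I would first dispose of the two generators that are suspensions. Since $\Sigma(j_1\eta_4\eta_5)=j^5_1\eta_5\eta_6$ and $\Sigma(j_2\nu')=j^4_2\Sigma\nu'$, the left commutative square of (\ref{exact seq for pi6(Crse)}) gives $(\partial^s_{r,1})_{6\ast}(j^5_1\eta_5\eta_6)=j^s_{r,1}f^s_{r,1}(j_1\eta_4\eta_5)$ and $(\partial^s_{r,1})_{6\ast}(j^4_2\Sigma\nu')=j^s_{r,1}f^s_{r,1}(j_2\nu')$. Using $f^s_{r,1}j_1=j_1(2^s\iota_4)+j_2\eta_3$ and $f^s_{r,1}j_2=j_2(2^r\iota_3)$, together with $2^s\eta_4\eta_5=0$, the relation $\eta_3\eta_4\eta_5=2\nu'$ in $\pi_6(S^3)$ (cf. \cite{Toda}), and $(2^r\iota_3)\nu'=2^r\nu'$ (the Whitehead square $[\iota_3,\iota_3]$ vanishing since $S^3$ is an $H$-space), these reduce to $2\,j^s_{r,1}j_2\nu'$ and $2^r\,j^s_{r,1}j_2\nu'$ respectively.

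The remaining generator $j^4_2\nu_4$ has nonzero Hopf invariant and is not a suspension, so here I would repeat the argument used for $M_{2^r}^3$ in (\ref{partial6 on nu4})--(\ref{equation y}). Write $(\partial^s_{r,1})_{6\ast}(j^4_2\nu_4)=\alpha\,j^s_{r,1}j_1\eta_4\eta_5+\beta\,j^s_{r,1}j_2\nu'+\gamma\,j^s_{r,1}[j_1,j_2]+\delta\,j_{S^6}\iota_6$. The relative James--Hopf invariant $H'_2$ of Lemmas \ref{lem for H2} and \ref{relate partial to H2} annihilates the three classes pulled in from the base $S^4\vee S^3$ and sends $j_{S^6}\iota_6$ to a generator of the $S^3\wedge S^3$ summand; since $\nu_4$ has Hopf invariant one over the suspended factor $S^4=\Sigma S^3$, one has $H_2(j^4_2\nu_4)=\iota_7$ in that summand, and because $f^s_{r,1}$ restricts to $2^r\iota_3$ on the relevant $S^3$ factor this forces $\delta=2^r$. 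The coefficient $\beta$ is then isolated by feeding the class through the $\phi$-map square of Lemma \ref{Diagram of phi} and projecting onto $\Z_4\{\Sigma\nu'\}$, exactly as the integer $y$ was solved for in (\ref{equation y}): on the $S^4$-summand the composite $(-\Sigma f^s_{r,1})_\ast$ equals $-(2^r\iota_4)\nu_4=-2^{2r}\nu_4+2^{r-1}(2^r+1)\Sigma\nu'$, whose $\Sigma\nu'$-component carries the factor $2^{r-1}(2^r+1)$, which is odd precisely when $r=1$. The components $\alpha,\gamma$ turn out to be immaterial for the quotient. Hence $(\partial^s_{r,1})_{6\ast}(j^4_2\nu_4)\equiv\beta\,j^s_{r,1}j_2\nu'+2^r j_{S^6}\iota_6$ with $\beta$ odd iff $r=1$.

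Finally I would assemble the cokernel as the quotient of $\pi_6(F^{s}_{r,1})=\Z_2\{j^s_{r,1}j_1\eta_4\eta_5\}\oplus\Z_4\{j^s_{r,1}j_2\nu'\}\oplus\Z_{2^{\min\{r,s\}}}\{j^s_{r,1}[j_1,j_2]\}\oplus\Z_{(2)}\{j_{S^6}\iota_6\}$ by the subgroup generated by $2\,j^s_{r,1}j_2\nu'$, $2^r\,j^s_{r,1}j_2\nu'$ and $\beta\,j^s_{r,1}j_2\nu'+2^r j_{S^6}\iota_6$. As $j^s_{r,1}j_2\nu'$ has order $4$ and $r\geq1$, the first two relations collapse to $\langle 2\,j^s_{r,1}j_2\nu'\rangle$. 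For $r\geq2$ the coefficient $\beta$ is even, the last relation becomes $2^r j_{S^6}\iota_6$, and the quotient is $\Z_2\oplus\Z_2\oplus\Z_{2^{\min\{r,s\}}}\oplus\Z_{2^r}$; for $r=1$ the odd $\beta$ fuses the surviving $\Z_2$ of $j^s_{r,1}j_2\nu'$ with $\Z_{(2)}\{j_{S^6}\iota_6\}$ into a single $\Z_4$, giving $\Z_2\oplus\Z_2\oplus\Z_4$. Both cases are written uniformly as $\Z_2\oplus(1-\epsilon_r)\Z_2\oplus\Z_{2^{\min\{r,s\}}}\oplus\Z_{2^{r+\epsilon_r}}$. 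The main obstacle is the coefficient computation for $j^4_2\nu_4$, and in particular pinning down the parity of $\beta$ that isolates the $r=1$ case; as in the Moore-space argument this rests on a careful analysis of the $\phi$-map and the relative James--Hopf diagrams, carried out here over the wedge $S^5\vee S^4$ rather than a single sphere, which makes the bookkeeping of the cross-terms more delicate.
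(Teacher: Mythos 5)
Your treatment of the two suspension generators and your use of $H_2'$ to pin the $j_{S^6}\iota_6$-coefficient of $(\partial^s_{r,1})_{6\ast}(j_2^4\nu_4)$ at $\pm 2^r$ both match the paper, and the parity analysis of $\beta$ via the $\phi$-map is workable. The gap is your assertion that the coefficients $\alpha$ (of $j^s_{r,1}j_1\eta_4\eta_5$) and $\gamma$ (of $j^s_{r,1}[j_1,j_2]$) are immaterial for the quotient. They are not. Writing $m=\min\{r,s\}$ and taking $r\geq 2$, $\beta$ even, $\alpha=0$ and $\gamma$ odd, the relevant piece of the cokernel becomes $\bigl(\Z_{2^{m}}\{j^s_{r,1}[j_1,j_2]\}\oplus\Z_{(2)}\{j_{S^6}\iota_6\}\bigr)/\langle \gamma\, j^s_{r,1}[j_1,j_2]\pm 2^r j_{S^6}\iota_6\rangle\cong\Z_{2^{m+r}}$ instead of $\Z_{2^{m}}\oplus\Z_{2^r}$; similarly $\alpha=1$ would fuse $\Z_2\{j^s_{r,1}j_1\eta_4\eta_5\}$ with $\Z_{(2)}\{j_{S^6}\iota_6\}$ into a $\Z_{2^{r+1}}$. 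So these coefficients must actually be determined, and neither of your two tools can see $\gamma$: $H_2'$ annihilates the whole bottom filtration $J_1=S^4\vee S^3$ and hence all of $j^s_{r,1}j_1\eta_4\eta_5$, $j^s_{r,1}j_2\nu'$, $j^s_{r,1}[j_1,j_2]$, while $\phi_\ast$ kills $j^s_{r,1}[j_1,j_2]$ outright because Whitehead products vanish in the H-space $\Omega(S^5\vee S^4)$ (and it only detects $\alpha$ after projecting onto the $\Z_2\{j_1^5\eta_5\eta_6\}$ summand, a step you do not take).

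The paper closes exactly this hole with a device you are missing: the inclusion $j_2\colon S^3\to S^4\vee S^3$ induces a map of cofibration sequences from (\ref{Cofiberation for Mr^k}) (with $k=3$) to (\ref{cofibre Cr^se}), hence a map $\theta\colon F_{p_3}\to F^{s}_{r,1}$ with $\theta j_{p_3}\simeq j^s_{r,1}j_2$ (diagram (\ref{diagramMr to Crse})). Naturality of $\partial$ then gives $(\partial^s_{r,1})_{6\ast}(j_2^4\nu_4)=\theta_\ast\partial_{6\ast}(\nu_4)=\theta_\ast(yj_{p_3}\nu'+2^rj^6_{p_3})$ with $y$ already known from (\ref{equation y}); whatever $j_1\eta_4\eta_5$- and $[j_1,j_2]$-components $\theta_\ast(j^6_{p_3})$ may have, they occur multiplied by $2^r$ and therefore vanish, since those classes have orders $2$ and $2^{\min\{r,s\}}\leq 2^r$. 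This one factorization settles $\alpha=\gamma=0$, the parity of $\beta$, and $\delta=\pm 2^r$ simultaneously, replacing both of your diagram arguments; you should either adopt it or supply an independent computation of $\gamma$.
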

\begin{proof}
	By the right commutative square in (\ref{exact seq for pi6(Crse)})
	\begin{align}
		&(\partial_{r,\epsilon}^{s})_{6\ast}( j^5_1\eta_5\eta_6)=(\partial_{r,\epsilon}^{s})_{6\ast}\Sigma  (j_1\eta_4\eta_5)=j_{r,\epsilon}^{s}f^s_{r,\epsilon\ast}(j_1\eta_4\eta_5)=j_{r,\epsilon}^s(f^s_{r,\epsilon}j_1)\eta_4\eta_5\nonumber \\
		&=j_{r,\epsilon}^s(j_1(2^s\iota_4)+\epsilon j_2\eta_3)\eta_4\eta_5=2\epsilon j_{r,\epsilon}^sj_2\nu' \label{partial6 Z2}\\
		&(\partial_{r,\epsilon}^{s})_{6\ast}( j^4_2\Sigma\nu')=(\partial_{r,\epsilon}^{s})_{6\ast}\Sigma (j_2\nu')=j_{r,\epsilon\ast}^sf^s_{r,\epsilon\ast}(j_2\nu')=j_{r,\epsilon}^s(f_{r,\epsilon}^sj_2)\nu'\nonumber \\
		&=j_{r,\epsilon}^s(j_2(2^r\iota_3))\nu'=2^rj_{r,\epsilon}^sj_2\nu'.\label{partial6 Z12}
	\end{align}
	There is a map $\m{r}{3}\xrightarrow{\bar{\theta}}\Cse$ making the following left ladder homotopy commutative and it induces the following right homotopy commutative ladder\\
	\begin{align} \footnotesize{\xymatrix{
				S^3\ar[r]^-{2^r\iota_3}\ar[d]_{j_2} & S^3\ar[r]^{i_3}\ar[d]_{j_2}&  \m{r}{3}\ar[r]^-{p_3}\ar[d]^{\bar{\theta} }& S^4\ar[d]_{j^4_2}\\
				S^4\vee S^3 \ar[r]^{f^s_{r,\epsilon}} &  S^4\vee S^3\ar[r]&\Cse \ar[r]^{q^s_{r,\epsilon}}&S^5\vee S^4,}}
		\footnotesize{\xymatrix{
				\Omega S^4\ar[r]^-{\partial}\ar[d]_{\Omega j^4_2} & F_{p_3}\ar[r]^{i_3}\ar[d]^{\theta}&  \m{r}{3}\ar[d]^{\bar{\theta} }\\
				\Omega (S^5\vee S^4) \ar[r]^{\partial^s_{r,\epsilon}} &  \F\ar[r]&\Cse, }} \label{diagramMr to Crse}
	\end{align}
	where $\theta j_{p_3}\simeq j^s_{r,\epsilon}j_2$, i.e., $ \footnotesize{\xymatrix{S^3\ar@/_0.5pc/[rr]_{j^s_{r,\epsilon}j_2}\ar@{^{(}->}[r]^{j_{p_3}}   &  F_{p_3} \ar[r]^{\theta} & \F }}$ by Lemma \ref{J(X,A)toJ(X',A')}.
	So we get the following  commutative ladder
	\begin{align}
		\small{\xymatrix{
				\pi_{7}(S^4)\ar[d]_{j^4_{2\ast}} \ar[r]^-{\partial_{6\ast}} & \!\pi_{6}(F_{p_3})\ar[r]^-{Proj.}\ar[d]_{\theta_\ast }& \pi_{6}( S^3\wedge S^3)\cong \pi_{6}(S^6) \ar[d]_{(j_2\wedge j_2)_\ast=id }\\
				\!\!\!\pi_{7}(S^5\!\vee \!S^4) \ar[r]^-{(\partial^s_{r,\epsilon})_{6\ast}} & \pi_{6}(\F)\ar[r]^-{p_{r,\epsilon\ast}^s}&\pi_{6}((S^4\vee S^3)\wedge (S^4\vee S^3))\cong \pi_{6}(S^6) } }
		\label{ladder pi Mr to Cr^s}
	\end{align}
	\begin{align}
		\!\! \!\!\!\!(\partial^s_{r,\epsilon})_{6\ast}(j^4_{2}\nu_4) \!=\!\theta_{\ast}\partial_{6\ast}(\nu_4)\!=\!\theta_{\ast}(yj_{p_3}\nu'+2^r\!j^6_{p_3})=(y\!+\!2^r\!m)j^s_{r,\epsilon}j_2\nu'\!\pm\!2^r\!j_{S^6}\iota_6
		\label{partial6 j2^4nu4 mv}
	\end{align}
	where $y$ comes from (\ref{equation y}) and $m$  comes from the assumption
	\begin{align}
		\theta_{\ast} (j^6_{p_3})=\widehat{l}j_{r,\epsilon}^{s}j_1\eta_4\eta_5+ mj_{r,\epsilon}^{s}j_2\nu'+\widehat{u}j_{r,\epsilon}^{s} [j_1,j_2]\pm j_{S^6}\iota_6,
		\label{Asumption m,v }
	\end{align}
	for some $\widehat{l}\in \Z_2$, $m\in \Z_4$, $\widehat{u}\in \Z_{2^{min\{r,s\}}}$.
	
	From (\ref{partial6 Z2}),(\ref{partial6 Z12}) and (\ref{partial6 j2^4nu4 mv}) , for $\epsilon=1$, we get
	\begin{align}
		&Coker (\partial^s_{r,1})_{6\ast}\nonumber\\
		=&\frac{\Z_2\{j_{r,\epsilon}^{s}j_1\eta_4\eta_5\}\oplus \Z_4\{j_{r,\epsilon}^{s}j_2\nu'\}  \oplus \Z_{2^{min\{r,s\}}}\{j_{r,\epsilon}^{s}[j_1,j_2]\} \oplus \Z_{(2)}\{j_{S^6}\iota_6\}}{\left \langle  2 j_{r,\epsilon}^{s}j_2\nu', 2^rj_{r,\epsilon}^{s}j_2\nu',  (y+2^rm)j_{r,\epsilon}^{s}j_2\nu'\pm 2^rj_{S^6}\iota_6 \right \rangle}  \nonumber\\
		\cong&\frac{\Z\{a,b,c,d\}}{\left \langle 2a, 2 b, 2^{min\{r,s\}}c, yb\!\pm\!2^rd \right \rangle} \!\cong
		\Z_2\!\oplus\!(1\!-\!\epsilon_r)\Z_2\!\oplus\!\Z_{2^{min\{r,s\}}}\!\oplus\!\Z_{2^{r+\epsilon_r}}, \infty\geq r\geq1 .\nonumber
	\end{align}
\end{proof}

As the proof of the split of (\ref{exact pi6(Mr3)}),  there is also an element  $\bar{\theta}\varsigma_r\in \pi_{6}(C_{r,1}^{5,s})$ with order 2 such that $q_{r,1\ast}^s(\varsigma_r)=j_{2}^4\eta_4\eta_5$. Hence the short exact sequence $\xymatrix{
	Coker(\partial^s_{r,1})_{6\ast}\ar@{^{(}->}[r]  &\pi_{6}(C_{r,1}^{5,s}) \ar@{->>}[r]^-{q_{r,1\ast}^s}& Ker(\partial^s_{r,1})_{5\ast} =\Z_2\{j_{2}^4\eta_4\eta_5\}}$ splits.

So,
$\pi_{6}(C_{r,1}^{5,s})\cong\Z_2\oplus \Z_2\oplus(1-\epsilon_r)\Z_2\oplus\Z_{2^{min\{r,s\}}}\oplus\Z_{2^{r+\epsilon_r}}, \infty\geq r\geq 1$ .

$\pi_6(C_{r,1}^{5,\infty})=\pi_6(C_{r}^5\vee S^4)\cong  \pi_6(C_{r}^5)\oplus \pi_6(S^4)\oplus \pi_6(C_{r}^8)\cong \pi_6(C_{r}^5)\oplus \Z_2 \oplus \Z_{2^{r+\epsilon_r}}$, which implies
$\pi_{6}(C_{r}^{5})\cong
\Z_2\oplus (1-\epsilon_r)\Z_2\oplus\Z_{2^{r+\epsilon_r}}, r\geq 1$.

$\pi_6(C_{\infty,1}^{5,s})=\pi_6(C^{5,s}\vee S^4)\cong  \pi_6(C^{5,s})\oplus \pi_6(S^4)\oplus \pi_6(C^{8,s})\cong \pi_6(C^{5,s})\oplus \Z_2 \oplus \Z_{(2)}$, which implies
$\pi_{6}(C^{5,s})\cong\Z_2\oplus \Z_2\oplus\Z_{2^{s}}$, $s\geq 1$.

\subsection{Calculating $\pi_{7}(C_{r}^{5}) $, $\pi_{7}(C^{5,s})$ and $\pi_{7}(\Cs)$}
\label{subsec:pi7 C}
From Lemma \ref{partial calculate1}, we get the exact sequence with two commutative squares
\begin{align} \small{\xymatrix{
			\pi_{8}( S^5\vee S^4)\ar[r]^-{ (\partial_{r,\epsilon}^{s})_{7\ast}}&\pi_{7}(\F)\ar[r]&\pi_{7}(\Cse)\ar[r]^-{q_{r,\epsilon\ast}^s}&\pi_{7}( S^5\vee S^4)\ar[r]^-{ (\partial_{r,\epsilon}^{s})_{6\ast}}& \pi_{6}(\F)\\
			\pi_{7}( S^4\vee S^3)\ar[r]^-{f^s_{r,\epsilon\ast}}\ar[u]_{\Sigma } &\pi_{7}(S^4\vee S^3)\ar[u]_{j^{s}_{r,\epsilon\ast} }&& &} } \label{exact seq for pi7(Crse)}
\end{align}
\begin{align}
	\pi_{7}(S^4\vee S^3)&=\Z_{(2)}\{j_1\nu_4\}\oplus\Z_4\{j_1\Sigma\nu'\}\oplus\Z_2\{j_2\nu'\eta_6\}\oplus\Z_2\{[j_1,j_2]\eta_6\}\nonumber,\\
	\pi_{8}(S^5\vee S^4)&=\Z_8\{j^5_1\nu_5\}\oplus\Z_2\{j_2^4\Sigma\nu'\eta_7\}\oplus\Z_2\{j_2^4\nu_4\eta_7\}\oplus\Z_{(2)}\{[j_1^5,j_2^4]\}\nonumber.
\end{align}
From (\ref{equation y}), (\ref{partial6 Z2}), (\ref{partial6 Z12}), (\ref{partial6 j2^4nu4 mv}), we get\\
\begin{align}
	Ker (\partial_{r,\epsilon}^{s})_{6\ast}=\left\{
	\begin{array}{ll}
		\Z_4\{\epsilon_rj_1^5\eta_5\eta_6+j_2^4\Sigma\nu'\}, & \hbox{$r\geq 1,\epsilon=1$;} \\
		\Z_{(2)}\{j_2^4\nu_4\}\oplus\Z_4\{j_2^4\Sigma\nu'\}, & \hbox{$r=\infty,\epsilon=1$;}\\
		\pi_{7}( S^5\vee S^4),& \hbox{$r=\infty, \epsilon=0$.}\\
	\end{array}
	\right.  \label{Kerpartial rse 6}
\end{align}

In the following we also allow $(r,s,\epsilon)=(0,0,0)$. Then  $C_{0,0}^{5,0}\simeq (S^4\vee S^3)\bigcup_{id}C(S^4\vee S^3)\simeq \ast$ and all the results in Section \ref{subsec. fibre cofibre} hold.

\begin{lemma}\label{lem pi7Frse}
	$ \pi_7(\F)=\Z_2\{j_{S^6}\eta_6\}\oplus \Z_{(2)}\{\widetilde{\rho}_{r,\epsilon}^s\}  \oplus Coker(\gamma_{r,\epsilon }^{s})_{7\ast}$,  where $\widetilde{\rho}_{r,\epsilon}^s\in  \pi_{7}(\F)$ is a lift of $\rho_{r,\epsilon}^s$ in $(\ref{rhorse})$ and $Coker(\gamma_{r,\epsilon }^{s})_{7\ast}$ is given by $(\ref{Cokergammarse})$.
	
\end{lemma}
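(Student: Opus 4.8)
The plan is to run the same strategy as in Lemma \ref{lem pi6Frse} one dimension higher, the new feature being an extra free summand coming from the two top $7$-cells. Since, by Lemma \ref{lemma Gray}, $\Sigma\F$ has no further cells below dimension $9$, attaching them does not change $\pi_7$, so $\pi_7(\F)\cong\pi_7(Sk_8(\F))$ and I may work entirely inside $Sk_8(\F)\simeq Y\vee S^6$, where $Y=(S^4\vee S^3)\cup_{\psi}C(S^6\vee S^6\vee S^7)$ is the complement of the split wedge summand $S^6$ and $\psi=(\gamma^s_{r,\epsilon}|_{S^4\wedge S^3},\,2^s[j_1,j_2],\,2^r[j_1,j_2])$ is the restriction of $\gamma_{r,\epsilon}^s$ to the three higher spheres (the two $7$-cells attached by $2^s[j_1,j_2]$ and $2^r[j_1,j_2]$, the $8$-cell by $\gamma^s_{r,\epsilon}|_{S^4\wedge S^3}$). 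The section $j_{S^6}$ immediately splits off $\pi_7(S^6)=\Z_2\{\eta_6\}$, producing $\Z_2\{j_{S^6}\eta_6\}$, so it remains to compute $\pi_7(Y)$.

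First I would extract the ``expected'' part. Applying Lemma \ref{stable exact seq} to $\psi\colon S^6\vee S^6\vee S^7\to S^4\vee S^3$ (the source is $5$-connected, $Y$ is $2$-connected, so $\dim S^7=7\le m+n-2=7$) gives exactness of $\pi_7(S^6\vee S^6\vee S^7)\xrightarrow{\psi_\ast}\pi_7(S^4\vee S^3)\xrightarrow{j_\ast}\pi_7(Y)$; since the $S^5$-component of $\gamma^s_{r,\epsilon}$ is null, $\ker j_\ast=\operatorname{Im}\psi_\ast=\operatorname{Im}(\gamma_{r,\epsilon}^s)_{7\ast}$, so the image of $j_\ast$ is $Coker(\gamma_{r,\epsilon}^s)_{7\ast}$ (computed in (\ref{Cokergammarse})). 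Applying Lemma \ref{stable exact seq} again to $j\colon S^4\vee S^3\to Y$, whose cofibre is $\Sigma(S^6\vee S^6\vee S^7)=S^7\vee S^7\vee S^8$ (and $m+n-2=8\ge7$), gives exactness of $\pi_7(S^4\vee S^3)\xrightarrow{j_\ast}\pi_7(Y)\xrightarrow{p_\ast}\pi_7(S^7\vee S^7\vee S^8)$. Thus $\pi_7(Y)$ sits in a short exact sequence with sub $Coker(\gamma_{r,\epsilon}^s)_{7\ast}$ and quotient $\operatorname{Im}(p_\ast)\subseteq\pi_7(S^7\vee S^7\vee S^8)=\Z_{(2)}\{\iota_7^{(1)}\}\oplus\Z_{(2)}\{\iota_7^{(2)}\}$, the two generators being the relative classes of the two $7$-cells.

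The hard part is pinning down $\operatorname{Im}(p_\ast)$, i.e.\ deciding which top-cell classes actually lift. A naive argument via the next Puppe map fails: both $7$-cells are attached by Whitehead products $2^s[j_1,j_2]$, $2^r[j_1,j_2]$, whose suspensions vanish, so stably it looks as though both generators lift, wrongly predicting a rank-two free part. The genuine obstruction is unstable and is carried by the boundary map $\delta\colon\pi_7(Y,S^4\vee S^3)\to\pi_6(S^4\vee S^3)$ of the pair, which in the metastable range is identified by Blakers--Massey with $p_\ast$ (via $\pi_7(Y,S^4\vee S^3)\cong\pi_7(Y/(S^4\vee S^3))$ for $7$ below the excision bound) and which sends each relative cell class to its attaching map: $\delta(\iota_7^{(1)})=2^s[j_1,j_2]$, $\delta(\iota_7^{(2)})=2^r[j_1,j_2]$. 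Since $[j_1,j_2]$ generates the torsion-free summand $\Z_{(2)}\{[j_1,j_2]\}$ of $\pi_6(S^4\vee S^3)$, the kernel $\{(a,b):2^sa+2^rb=0\}\cong\Z_{(2)}$ is generated by the single class $\rho_{r,\epsilon}^s$ of (\ref{rhorse}) (essentially $\iota_7^{(1)}-2^{s-r}\iota_7^{(2)}$ when $r\le s$, and symmetrically otherwise), and by exactness of the pair sequence $\operatorname{Im}(p_\ast)=\ker\delta=\Z_{(2)}\{\rho_{r,\epsilon}^s\}$. I expect this computation of $\delta$ — recovering the unstable attaching data so that the naive rank-two contribution collapses to rank one — to be the main obstacle.

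Finally I would assemble. As $\operatorname{Im}(p_\ast)\cong\Z_{(2)}$ is free, the sequence $0\to Coker(\gamma_{r,\epsilon}^s)_{7\ast}\to\pi_7(Y)\to\Z_{(2)}\{\rho_{r,\epsilon}^s\}\to0$ splits; choosing a lift $\widetilde{\rho}_{r,\epsilon}^s\in\pi_7(Y)\subseteq\pi_7(\F)$ yields $\pi_7(Y)\cong\Z_{(2)}\{\widetilde{\rho}_{r,\epsilon}^s\}\oplus Coker(\gamma_{r,\epsilon}^s)_{7\ast}$. Reinstating the wedge summand $S^6$ costs nothing in degree $7$: the only possible extra terms are Whitehead products $[\alpha,\beta]$ with $\alpha\in\pi_i(Y)$, $\beta\in\pi_j(S^6)$ and $i+j-1=7$, which would force $i\le2$, impossible since $Y$ is $2$-connected; hence $\pi_7(Y\vee S^6)=\pi_7(Y)\oplus\pi_7(S^6)$. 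Combining the three summands gives exactly $\pi_7(\F)=\Z_2\{j_{S^6}\eta_6\}\oplus\Z_{(2)}\{\widetilde{\rho}_{r,\epsilon}^s\}\oplus Coker(\gamma_{r,\epsilon}^s)_{7\ast}$, with all the remaining steps routine once the two exact sequences and the unstable boundary computation are in place.
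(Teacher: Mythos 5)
Your argument is correct and follows essentially the same route as the paper: both identify $\pi_7(Sk_8\F)$ as an extension of the kernel of the homomorphism on $\pi_6$ induced by the attaching map $\gamma_{r,\epsilon}^s$ (namely $\Z_{(2)}\{\rho_{r,\epsilon}^s\}$ together with the $\Z_2$ carried by the split wedge summand $S^6$) by $Coker(\gamma_{r,\epsilon}^s)_{7\ast}$, and then split the extension using freeness of $\Z_{(2)}$ and the section $j_{S^6}$. The only cosmetic difference is that you obtain this exact sequence from the homotopy sequence of the pair $(Y,S^4\vee S^3)$ plus Blakers--Massey, whereas the paper reads off the same sequence from the fibration of the pinch map $Sk_8\F\rightarrow S^6\vee S^7\vee S^7\vee S^8$ via Lemma \ref{partial calculate1} and Lemma \ref{stable exact seq}.
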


\begin{proof}
	There is a  fibration sequence for the map $p_{r,\epsilon}^s$ in (\ref{Cof of Sk8Frse})
	\begin{align}
		\Omega(S^6\vee S^7\vee S^7\vee S^8) \xrightarrow{\widehat{\partial_{r,\epsilon}^{s}}}F_{p_{r,\epsilon}^s}  \rightarrow  Sk_{8}\F   \xrightarrow{p_{r,\epsilon}^s} S^6\vee S^7\vee S^7\vee S^8 \nonumber.
	\end{align}
	where $F_{p_{r,\epsilon}^s}\simeq J(M_{\gamma_{r,\epsilon}^s}, S^5\vee S^6\vee S^6\vee S^7)$, with $Sk_{8}F_{p_{r,\epsilon}^s}\simeq S^4\vee S^3\vee S^8$.
	From  Lemma \ref{partial calculate1} and Lemma \ref{stable exact seq},  there is an exact sequence
	\begin{align}
		\pi_{7}(X) \!\xrightarrow{(\gamma_{r,\epsilon }^{s})_{7\ast}} \pi_{7}( S^4\vee S^3 ) \rightarrow  \pi_{7}( \F )  \xrightarrow{p^s_{r,\epsilon\ast}} \pi_{7}( \Sigma X)\cong  \pi_{6}( X) \xrightarrow{(\gamma_{r,\epsilon }^{s})_{6\ast}} \pi_{6}(S^4\vee S^3) \nonumber.
	\end{align}
	where  $X=S^5\vee S^6\vee S^6\vee S^7$ and $\pi_6(X)=\Z_2\{j'_1\eta_5\}\oplus \Z_{(2)}\{j'_2\iota_6\}\oplus \Z_{(2)}\{j'_3\iota_6\}$. Let $\iota^{34}_7 :=\Sigma j'_2\iota_7 $ and $\iota^{43}_7:=\Sigma j'_3\iota_7$.
	$$ Ker(\gamma_{r,\epsilon }^{s})_{6\ast}\cong \Z_{(2)}\{\rho_{r,\epsilon}^s\}\oplus\Z_2\{\Sigma j'_1 \eta_6\},$$
	\begin{align}
		\!\!\!\!\!\!\footnotesize{\begin{tabular}{r|c|c|c|}
				\cline{2-4}
				& $\infty\geq s>  r>0$&$\infty\geq r>s>0$&$r=s$ \\
				\cline{2-4}
				$\rho_{r,\epsilon}^s=$  &  $ 2^{s-r}\iota^{43}_7-\iota^{34}_7$ & $ \iota^{43}_7-2^{r-s}\iota^{34}_7$ &$\iota^{43}_7-\iota^{34}_7$\\
				\cline{2-4}
		\end{tabular}}~  \label{rhorse}
	\end{align}
	\begin{align}
		&Coker(\gamma_{r,\epsilon }^{s})_{7\ast}=\frac{\Z_{(2)}\{j_1\nu_4\}\oplus\Z_4\{j_1\Sigma\nu'\}\oplus\Z_2\{j_2\nu'\eta_6\}\oplus\Z_2\{[j_1,j_2]\eta_6\}}{\left \langle 2^{s+1}j_1\nu_4-2^sj_1\Sigma \nu'+\epsilon[j_1,j_2]\eta_6\right\rangle} .\label{Cokergammarse}
	\end{align}
	
	$0\rightarrow Coker(\gamma_{r,\epsilon }^{s})_{7\ast}\xrightarrow{j_{r,\epsilon,\ast}^s} \pi_7(\F)\xrightarrow{p^s_{r,\epsilon\ast}} \Z_{(2)}\{\rho_{r,\epsilon}^s\}\oplus\Z_2\{\Sigma j'_1 \eta_6\}\rightarrow 0$.
	
	Above exact sequence splits since in (\ref{Cof of Sk8Frse}), the wedge summand $S^6$ of $ S^6\vee S^7\vee S^7\vee S^8$ has a section $j_{S^6}:S^6\rightarrow Sk_{8}(\F)$. Thus we complete the proof of this lemma.
\end{proof}

From the commutative square in (\ref{exact seq for pi7(Crse)})
\begin{align}
	&(\partial_{r,\epsilon}^{s})_{7\ast}( j^5_1\nu_5)=(\partial_{r,\epsilon}^{s})_{7\ast}\Sigma  (j_1\nu_4)=j_{r,\epsilon,\ast}^sf^s_{r,\epsilon\ast}(j_1\nu_4)=j_{r,\epsilon}^s(f^s_{r,\epsilon}j_1)\nu_4\nonumber \\
	&=j_{r,\epsilon}^s(j_1(2^s\iota_4)+\epsilon j_2\eta_3)\nu_4=j_{r,\epsilon}^s(j_1(2^s\iota_4)\nu_4+\epsilon j_2\eta_3\nu_4+[j_1(2^s\iota_4),\epsilon j_2\eta_3]H(\nu_4)) \nonumber\\
	&=2^{2s}j_{r,\epsilon}^sj_1\nu_4-2^{s-1}(2^s-1)j_{r,\epsilon}^sj_1\Sigma\nu'+ \epsilon j_{r,\epsilon}^sj_2\nu'\eta_6\label{partial7 j1nu5}\\
	&(\partial_{r,\epsilon}^{s})_{7\ast}( j^4_2\Sigma\nu'\eta_7)=(\partial_{r,\epsilon}^{s})_{7\ast}\Sigma (j_2\nu'\eta_6)=j_{r,\epsilon}^sf^s_{r,\epsilon}(j_2\nu'\eta_6)=j_{r,\epsilon}^s(f_{r,\epsilon}^sj_2)\nu'\eta_6\nonumber \\
	&=j_{r,\epsilon}^sj_2(2^r\iota_3)\nu'\eta_6=0.~~\text{(Note $\eta_3\nu_4=\nu'\eta_6$.)}\label{partial7j^4Sigmanu'eta_7}
\end{align}
From  (\ref{partial7nu4eta7}) and  the commutative  diagram (\ref{diagramMr to Crse})
\begin{align}
	&(\partial_{r,\epsilon}^{s})_{7\ast}( j^4_2\nu_4\eta_7)=\theta_{\ast}\partial_{7\ast}(\nu_4\eta_7)= \theta_{\ast}(\epsilon_{r}j_{p_3}\nu'\eta_6)=\epsilon_{r}\theta j_{p_3}\nu'\eta_6=\epsilon_{r} j_{r,\epsilon}^sj_2\nu'\eta_6. \label{partial7 j2^4nu4eta7}
\end{align}
It remains to compute
$(\partial^s_{r,1})_{7,*}([j^5_1,j^4_2])$  the determination of which requires the computation of $(\partial^s_{\infty,0})_{7,*}([j^5_1,j^4_2])$.

Since $Coker(\gamma_{r,1}^{s})_{7\ast}=\Z_{2^{s+2}}\{j_{r,\epsilon}^{s}j_1\nu_4\}\!\oplus\! \Z_4\{j_{r,\epsilon}^{s}j_1\Sigma\nu'\}\!\oplus\! \Z_2\{j_{r,\epsilon}^{s}j_2\nu'\eta_6\}$ in (\ref{Cokergammarse}), suppose
\begin{align}
	\!\! (\partial_{r,1}^{s})_{7\ast}([j^5_1,j^4_2])=x\widetilde{\rho}_{r,1}^s+yj_{S^6}\eta_6+kj_{r,1}^sj_1\nu_4
	+lj_{r,1}^sj_1\Sigma\nu'+uj_{r,1}^sj_2\nu'\eta_6 \label{partial7r1s [j15j24]}
\end{align}
where $y,u\in \Z_2$; $l\in \Z_4$; $k\in \Z_{2^{s+2}}$; $x\in \Z$ are to be determined.

By simplifying  the   $j^s_{\infty,0}:S^4\vee S^3\hookrightarrow Sk_{8}(F_{\infty,0}^s)$ by $j_{0}^s$,
we also suppose
\begin{align}
	(\partial_{\infty,0}^s)_{7\ast}([j_1^5,j_2^4])\!=\! t'\widetilde{\iota_{7}}^{43}\!+\!y'j_{S^6}\eta_6\!+\!vj^s_{0}j_1\nu_4\!+\!wj^s_{0}j_1\Sigma\nu'\!+\!u'j^s_{0}j_2\nu'\eta_6\!+\!zj^s_{0}[j_1,j_2]\eta_6, \label{partial7infty,0s [j15j24]}
\end{align}
where $t',v\in \Z$, $y',u',z\in \Z_2$, $w\in \Z_4$.

The determination of the first two coefficients in (\ref{partial7r1s [j15j24]}) and (\ref{partial7r1s [j15j24]}) can be done simultaneously in the following Lemma.
\begin{lemma}\label{lem detemine x,y,t'}
	In $(\ref{partial7r1s [j15j24]})$, $(\ref{partial7infty,0s [j15j24]})$, $y=1, y'=0\in \Z_2$; $x=2^{min\{r,s\}}t$;  $t'=2^st$, $t$ is odd.
\end{lemma}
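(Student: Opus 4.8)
The plan is to detect the classes $(\partial_{r,1}^{s})_{7\ast}([j^5_1,j^4_2])$ and $(\partial_{\infty,0}^{s})_{7\ast}([j^5_1,j^4_2])$ by pushing them into the top cells of $Sk_8(\F)$ via the second James--Hopf invariant, feeding in the single geometric input that the James--Hopf invariant of a Whitehead product of wedge inclusions is the corresponding cross smash class with a unit coefficient. By Lemma \ref{lem for H2}, the restriction of $H'_2$ to $Sk_8(\F)=J_2(M_{f_{r,\epsilon}^s},S^4\vee S^3)$ is, up to the inclusion $J_2/J_1\hookrightarrow\Omega\Sigma(J_2/J_1)$, exactly the pinch $p_{r,\epsilon}^s$ of (\ref{Cof of Sk8Frse}); hence on $\pi_7(\F)\cong\pi_7(Sk_8\F)$ one has $H'_2=\Sigma\circ p_{r,\epsilon\ast}^s$, where $\Sigma$ is the (here injective) suspension. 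The summands $j_{r,\epsilon}^sj_1\nu_4,\ j_{r,\epsilon}^sj_1\Sigma\nu',\ j_{r,\epsilon}^sj_2\nu'\eta_6$ of $Coker(\gamma_{r,\epsilon}^s)_{7\ast}$ come from $J_1=S^4\vee S^3$ and so are killed by $p_{r,\epsilon\ast}^s$, while $p^s_\ast(\widetilde{\rho}_{r,1}^s)=\rho_{r,1}^s$, $p^s_\ast(\widetilde{\iota_{7}}^{43})=\iota_7^{43}$ and $p^s_\ast(j_{S^6}\eta_6)=\Sigma j_1'\eta_6$. Thus applying $p_{r,\epsilon\ast}^s$ to (\ref{partial7r1s [j15j24]}) and (\ref{partial7infty,0s [j15j24]}) isolates precisely the first two coefficients.

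To evaluate the left-hand side I invoke Lemma \ref{relate partial to H2}: $H'_2\partial([j^5_1,j^4_2])=\Omega\Sigma(f_{r,\epsilon}^s\wedge id)_\ast\,H_2([j^5_1,j^4_2])$, where $H_2$ is the absolute James--Hopf invariant of $\Omega\Sigma(S^4\vee S^3)$ and hence is independent of $(r,s,\epsilon)$. Because $[j^5_1,j^4_2]$ is the Whitehead product of the two sphere inclusions, its James--Hopf invariant has nonzero component only on the two cross summands of $\Sigma((S^4\vee S^3)^{\wedge2})=S^9\vee S^8\vee S^8\vee S^7$, i.e. $H_2([j^5_1,j^4_2])=t\,\iota_8^{43}+t''\iota_8^{34}$ with $t,t''$ odd, the two $S^8$ being $\Sigma(S^4\wedge S^3)$ and $\Sigma(S^3\wedge S^4)$. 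Applying $\Sigma(f_{r,\epsilon}^s\wedge id)$, whose first factor sends $S^4$ by $2^s$ into $S^4$ and by $\epsilon\eta_3$ into $S^3$ and sends $S^3$ by $2^r$ into $S^3$, and then desuspending, gives
\begin{align}
p_{r,\epsilon\ast}^s\big((\partial_{r,\epsilon}^s)_{7\ast}([j^5_1,j^4_2])\big)=t\,2^s\iota_7^{43}+t''\,2^r\iota_7^{34}+\epsilon\,t\,\Sigma j_1'\eta_6,\nonumber
\end{align}
where the last term arises from the component $\epsilon(\eta_3\wedge id_{S^3})=\epsilon\eta_6$ of $f_{r,\epsilon}^s\wedge id$ on the cell $S^4\wedge S^3$.

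Matching coefficients finishes the argument. By the exact sequence of Lemma \ref{lem pi7Frse} this class lies in $Ker(\gamma_{r,\epsilon}^s)_{6\ast}=\mathbb{Z}_{(2)}\{\rho_{r,\epsilon}^s\}\oplus\mathbb{Z}_2\{\Sigma j_1'\eta_6\}$, so its $\iota_7^{43},\iota_7^{34}$ part is automatically a multiple of $\rho_{r,1}^s$; reading off the $\iota_7^{43}$-coefficient against the three cases of (\ref{rhorse}) yields $x=t\,2^{\min\{r,s\}}$ uniformly, and the $\Sigma j_1'\eta_6$-coefficient gives $y=t\equiv1\pmod2$. For $(\infty,0)$ one has $2^r=0$ and $\epsilon=0$, so the displayed class collapses to $t\,2^s\iota_7^{43}$, whence $t'=t\,2^s$ and $y'=0$; the unit $t$ is the same in both computations precisely because $H_2([j^5_1,j^4_2])$ does not depend on $f_{r,\epsilon}^s$. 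This gives $y=1$, $y'=0$, $x=2^{\min\{r,s\}}t$ and $t'=2^st$ with $t$ odd.

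The main obstacle is justifying the input $H_2([j^5_1,j^4_2])=t\iota_8^{43}+t''\iota_8^{34}$ with $t$ odd, i.e. that the second James--Hopf invariant of a Whitehead product of wedge inclusions is the cross smash class with a unit coefficient; this is to be extracted either from the combinatorial formula $H_2(x_1x_2)=x_1\wedge x_2$ applied to a commutator in the James monoid, or by naturality from the universal Whitehead product. A secondary, purely bookkeeping, difficulty is to track the four smash cells of $(S^4\vee S^3)^{\wedge2}$ through $f_{r,\epsilon}^s\wedge id$ and the suspensions, and to identify the cell $S^3\wedge S^3$ with $j_{S^6}$ and the two cells $S^4\wedge S^3,\ S^3\wedge S^4$ with $\iota_7^{43},\iota_7^{34}$ as normalized in (\ref{rhorse}).
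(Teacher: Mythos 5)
Your argument is essentially correct but follows a genuinely different route from the paper. The paper never touches the James--Hopf invariant here: it introduces the degenerate complex $C^{5,0}_{0,0}\simeq\ast$, for which $\partial^0_{0,0}\colon\Omega(S^5\vee S^4)\to F^0_{0,0}$ is a homotopy equivalence, writes $(\partial^0_{0,0})_{7\ast}([j_1^5,j_2^4])=t\widetilde{\rho^0_{0,0}}+\cdots$ with $t$ \emph{automatically} odd (the image of a generator of the unique free summand under an isomorphism), and then pushes this forward along $\theta_0\colon F^0_{0,0}\to\F$ using the naturality of Lemma \ref{J(X,A)toJ(X',A')}, with $\widetilde{\theta_0}=(f^s_{r,\epsilon}\wedge id)|$. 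You instead compute $p^s_{r,\epsilon\ast}\circ(\partial^s_{r,\epsilon})_{7\ast}$ directly from Lemmas \ref{lem for H2} and \ref{relate partial to H2}, feeding in the value of $H_2$ on the Whitehead product. Both computations land on the same class $t(2^s\iota_7^{43}-2^r\iota_7^{34})+t\epsilon\Sigma j_1'\eta_6$, and your observation that the same odd $t$ serves both $(r,1)$ and $(\infty,0)$ because $H_2([j_1^5,j_2^4])$ is computed once in $\Omega\Sigma(S^4\vee S^3)$ plays exactly the role of the paper's universal case. The bookkeeping through $f^s_{r,\epsilon}\wedge id$ (including the $\epsilon\eta_6$ contribution from the $S^4\wedge S^3$ cell into $S^3\wedge S^3$, and the matching of $\iota_7^{43},\iota_7^{34}$ against the three cases of (\ref{rhorse})) is carried out correctly.

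The one point you flag but do not prove --- that $H_2([j_1^5,j_2^4])$ hits the two cross summands $\Sigma(S^4\wedge S^3)$ and $\Sigma(S^3\wedge S^4)$ with odd (in fact unit) coefficients --- is a genuine missing ingredient, though a true and classical one. It can be closed by a Hurewicz argument: in $H_*(\Omega\Sigma X;\Z)\cong T(\widetilde H_*(X))$ the Hurewicz image of the Samelson product corresponding to $[j_1^5,j_2^4]$ is the graded commutator $a\otimes b\pm b\otimes a$, and $H_2$ induces on homology the projection onto length-two tensors, so the components on the two $S^8$ summands of $\Sigma(X\wedge X)$ are $\pm 1$; any possible extra $\Z_2\{\eta\}$ component on $\Sigma(S^3\wedge S^3)$ is harmless since it is annihilated by $2^r$ under $f^s_{r,\epsilon}\wedge id$. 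Alternatively, you could import the paper's device: in the universal case $f=id$ the boundary map is an equivalence, which yields the unit coefficient with no computation of $H_2$ at all --- this is what the paper's route buys you.
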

\begin{proof}
	
	There is a map $C_{0,0}^{5,0}\xrightarrow{\bar{\theta_0}}C_{r,\epsilon}^{5,s}$ making  the following left ladder homotopy commutative and it induces the following right homotopy commutative ladder\\
	\begin{align} \footnotesize{\xymatrix{
				S^4\vee S^3\ar[r]^-{id}\ar@{=}[d]_{id} &S^4\vee  S^3\ar[r]\ar[d]_{f_{r,\epsilon}^s}& C_{0,0}^{5,0} \ar[r]^-{q^0_{0,0}}\ar[d]^{\bar{\theta_0} }& S^5\vee S^4\ar@{=}[d]_{id}\\
				S^4\vee S^3 \ar[r]^{f^s_{r,\epsilon}} &  S^4\vee S^3\ar[r]&C_{r,\epsilon}^{5,s} \ar[r]^{q^s_{r,\epsilon}}&S^5\vee S^4,}}
		\footnotesize{\xymatrix{
				\Omega (S^5\vee S^4)\ar[r]^-{\partial^0_{0,0}}\ar@{=}[d]_{id} & F^0_{0,0}\ar[r]\ar[d]^{\theta_0}&  C_{0,0}^{5,0}\ar[d]^{\bar{\theta_0} }\\
				\Omega (S^5\vee S^4) \ar[r]^{\partial^s_{r,\epsilon}} &  \F\ar[r]&C_{r,\epsilon}^{5,s}. }} \nonumber
	\end{align}
	We get the following  two commutative diagrams
	\begin{align}
		\small{\xymatrix{
				\pi_{8}(S^5\vee S^4)\ar@{=}[d]_{id} \ar[r]^-{(\partial^0_{0,0})_{7\ast}} & \!\pi_{7}(F^0_{0,0})\ar[d]_{\theta_{0\ast} }\\
				\!\!\!\pi_{8}(S^5\!\vee \!S^4) \ar[r]^-{(\partial^s_{r,\epsilon})_{7\ast}} & \pi_{7}(F_{r,\epsilon}^s),} }~~\footnotesize{\xymatrix{
				\pi_7 (S^4\vee S^3)\ar[r]^-{j_{0,0\ast}^{0}}\ar@{=}[d]_{id} &\pi_7 ( F_{0,0}^{0})\ar[r]^-{p_{0,0\ast}^{0}}\ar[d]^{\theta_{0\ast}}& \pi_7(S^6\vee S^7\vee S^7)\ar[d]^{\widetilde{\theta_0}_{\ast}}\\
				\pi_7 (S^4\vee S^3) \ar[r]^-{j^s_{r,\epsilon \ast}} & \pi_7(\F)\ar[r]^-{p^s_{r,\epsilon \ast}}&\pi_7(S^6\vee S^7\vee S^7), }} \nonumber
	\end{align}
	where $\widetilde{\theta_0}=(f_{r,\epsilon}^s\wedge id)|_{S^6\vee S^7\vee S^7}$ by Lemma \ref{J(X,A)toJ(X',A')}.
	and  $\Omega(S^5\vee S^4)  \xrightarrow{\partial^0_{0,0} } F^0_{0,0}$ is a homotopy equivalence.
	Thus from $Coker(\gamma_{0,0}^{0})_{7\ast}=\Z_8\{j_1\nu_4\}\oplus \Z_2\{j_2\nu'\eta_6\}\oplus\Z_2\{[j_1,j_2]\eta_6\}$ in (\ref{Cokergammarse}) we get\\ $(\partial^0_{0,0})_{7\ast}([j_1^5,j_2^4])=t\widetilde{\rho^0_{0,0}}+y_0j_{S^6} \eta_6+k_0j^0_{0,0}j_1\nu_4+ u_0j^0_{0,0}j_2\nu'\eta_6+w_0j^0_{0,0}[j_1,j_2]\eta_6$\\ where $t$ is odd integer, $y_0, u_0, w_0\in \Z_2$,$k_0\in \Z_8$.
	\begin{align}
		&p_{r,\epsilon\ast}^s(\partial_{r,\epsilon}^s)_{7\ast}([j_1^5,j_2^4])=p_{r,\epsilon\ast}^s\theta_{0\ast}(\partial^0_{0,0})_{7\ast}([j_1^5,j_2^4])\nonumber \\
		=&\widetilde{\theta_0}_{\ast}p^0_{0,0\ast}(t\widetilde{\rho}^0_{0,0}+y_0j_{S^6} \eta_6+k_0j^0_{0,0}j_1\nu_4+ u_0j^0_{0,0}j_2\nu'\eta_6+w_0j^0_{0,0}[j_1,j_2]\eta_6)\nonumber \\
		=&\widetilde{\theta_0}_{\ast}(t(\iota^{43}_7-\iota^{34}_7)+y_0\Sigma j'_1 \eta_6 )=t(2^s\iota^{43}_7-2^r\iota^{34}_7)+t\epsilon\Sigma j'_1 \eta_6.\nonumber\\
		=& 2^{min\{r,s\}}t\rho_{r,\epsilon}^s+t\epsilon\Sigma j'_1 \eta_6\nonumber
	\end{align}
	On the other hand, from (\ref{partial7r1s [j15j24]}) and (\ref{partial7infty,0s [j15j24]})
	\begin{align}
		&p_{r,1\ast}^s(\partial_{r,1}^s)_{7\ast}([j_1^5,j_2^4])=p_{r,1\ast}^s(x\widetilde{\rho}_{r,1}^s+yj_{S^6}\eta_6+kj_{r,1}^sj_1\nu_4
		+\dots)=x\rho_{r,1}^s+y\Sigma j'_1 \eta_6\nonumber\\
		&p_{\infty,0\ast}^s(\partial_{\infty,0}^s)_{7\ast}([j_1^5,j_2^4])=p_{\infty,0\ast}^s(  t'\widetilde{\iota_{7}}^{43}+y'j_{S^6}\eta_6+vj^s_{0}j_1\nu_4+\dots)=t'\iota^{43}_7+ y'\Sigma j'_1 \eta_6\nonumber
	\end{align}
	So $y= t=1 \in \Z_2$, $y'=0\in \Z_2$; $x=2^{min\{r,s\}}t$; $t'=2^{min\{\infty,s\}}t=2^st$, $t$ is odd.
\end{proof}
The determination of the remaining coefficients in (\ref{partial7r1s [j15j24]}) depends on the remaining coefficients in (\ref{partial7infty,0s [j15j24]}).

The following short exact sequence is split since $\pi_{7}(S^5)$  splits out of  $\pi_{7}(\m{s}{4})$.
\begin{align}
	Coker (\partial_{\infty,0}^s)_{7\ast}   \hookrightarrow\!  \pi_{7}(\m{s}{4}\!\vee\! S^3\!\vee\! S^4)\!\twoheadrightarrow\! \pi_{7}(S^5\! \vee\! S^4)\cong  \pi_{7}(S^5)\!\oplus \! \pi_{7}(S^4)\nonumber
\end{align}
Note that $(\partial_{\infty,0}^s)_{7\ast}$ is given by (\ref{partial7 j1nu5})  (\ref{partial7j^4Sigmanu'eta_7})  (\ref{partial7 j2^4nu4eta7})  (\ref{partial7infty,0s [j15j24]}),i.e.,

$ Coker (\partial_{\infty,0}^s)_{7\ast}=\Z_2\{j_{S^6}\eta_6\}\oplus H^s$, where $H^s$ is given by the following
\begin{align}
	\begin{array}{l}
		\frac { \Z_{(2)}\{\widetilde{\iota_{7}}^{43}\}\oplus \Z_{2^{s+1}}\{j^s_{0}j_1\nu_4\}\oplus\Z_4\{j^s_{0}j_1\Sigma\nu'\}\oplus \Z_2\{j^s_{0}j_2\nu'\eta_6\}\oplus\Z_2\{j^s_{0}[j_1,\!j_2]\eta_6\}}
		{ \left \langle 2^{s+1}j^s_{0}\!j_{1}\!\nu_4\!-\!2^{s}j^s_{0}\!j_{1}\!\Sigma\nu'\!,~  2^{2s}j^s_{0}\!j_{1}\!\nu_4\!-\!2^{s\!-\!1}(2^s\!-\!1)j^s_{0}\!j_{1}\!\Sigma\nu'\!, ~2^st\widetilde{\iota_{7}}^{43}\!+\!vj^s_{0}\!j_1\!\nu_4\!+\!wj^s_{0}\!j_1\!\Sigma\nu'
			+u'j^s_{0}\!j_2\!\nu'\eta_6\!+\!zj^s_{0}\![j_1,j_2]\eta_6 \right \rangle } \\
	\end{array}.  \label{Coker (partial0s)7ast vw}
\end{align}
On the other hand, by (\ref{pi7(Mr4)}),  we have

$ \pi_{7}(\m{s}{4}\vee S^3\vee S^4)\cong \pi_{7}(\m{s}{4})\oplus \pi_{7}(S^3)\oplus \pi_{7}(S^4) \oplus \pi_{7}(\m{s}{4}\wedge S^2)\oplus \pi_{7}(\m{s}{4}\wedge S^3)\oplus \pi_{7}(S^{3}\wedge S^3)\cong \Z_{2^{s+1}}\oplus \Z_{2^{\alpha_s}}\oplus \Z_2\oplus \Z_2\oplus \Z_{(2)}\oplus \Z_4\oplus \Z_2\oplus\Z_{2^s}\oplus \Z_2$, where $\alpha_{s}=min\{2,s-1\}$.
\begin{align}
	\text{Thus}&~~~~~H^s\cong \Z_{2^{s+1}}\oplus \Z_{2^{\alpha_s}}\oplus\Z_{2^s}\oplus \Z_2\oplus \Z_2. \label{H^s}
\end{align}
\begin{lemma}\label{lem detemine v,w}
	In $(\ref{partial7infty,0s [j15j24]})$, for $s\geq 1$ , $2^{\alpha_s}\mid w$, $2^{s}\mid v$.
\end{lemma}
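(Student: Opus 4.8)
The plan is to compare the explicit presentation (\ref{Coker (partial0s)7ast vw}) of $\mathrm{Coker}\,(\partial_{\infty,0}^s)_{7\ast}=\Z_2\{j_{S^6}\eta_6\}\oplus H^s$ against the already-known isomorphism type (\ref{H^s}), and to read off the divisibilities of $v$ and $w$ from a comparison of elementary divisors of the two descriptions of $H^s$.

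First I would simplify the presentation of $H^s$ by absorbing the two relations in (\ref{Coker (partial0s)7ast vw}) that involve only $j^s_0 j_1\nu_4$ and $j^s_0 j_1\Sigma\nu'$. Since $j^s_0 j_1\nu_4$ has order $2^{s+1}$, the relation $2^{2s}j^s_0 j_1\nu_4-2^{s-1}(2^s-1)j^s_0 j_1\Sigma\nu'=0$ becomes $2^{s-1}(2^s-1)j^s_0 j_1\Sigma\nu'=0$, and as $2^s-1$ is odd this forces $2^{s-1}j^s_0 j_1\Sigma\nu'=0$, so $j^s_0 j_1\Sigma\nu'$ acquires order exactly $2^{\alpha_s}$ with $\alpha_s=\min\{2,s-1\}$. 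Writing $B,C,D,E$ for the classes of $j^s_0 j_1\nu_4,\,j^s_0 j_1\Sigma\nu',\,j^s_0 j_2\nu'\eta_6,\,j^s_0[j_1,j_2]\eta_6$, the subgroup they generate is $T=\Z_{2^{s+1}}\{B\}\oplus\Z_{2^{\alpha_s}}\{C\}\oplus\Z_2\{D\}\oplus\Z_2\{E\}$, and $H^s$ is the quotient of $\Z_{(2)}\{\widetilde{\iota_7}^{43}\}\oplus T$ by the single remaining relation $R=2^s t\,\widetilde{\iota_7}^{43}+vB+wC+u'D+zE$ coming from (\ref{partial7infty,0s [j15j24]}), where $t$ is odd by Lemma \ref{lem detemine x,y,t'}.

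Next, because the $\widetilde{\iota_7}^{43}$-coefficient $2^s t$ of $R$ is a nonzero element of $\Z_{(2)}$, the cyclic group $\langle R\rangle$ meets $T$ trivially; hence $T$ embeds in $H^s$ with $H^s/T\cong\Z_{(2)}/2^s\Z_{(2)}\cong\Z_{2^s}$, giving a short exact sequence $0\to T\to H^s\to\Z_{2^s}\to 0$ whose class in $\mathrm{Ext}(\Z_{2^s},T)\cong T/2^s T$ is represented by $\sigma=vB+wC+u'D+zE$. Since $\alpha_s\le s$ and $s\ge 1$ one has $2^s T=\langle 2^s B\rangle\cong\Z_2$, so $T/2^s T\cong\Z_{2^s}\{B\}\oplus\Z_{2^{\alpha_s}}\{C\}\oplus\Z_2\{D\}\oplus\Z_2\{E\}$ and $\sigma$ records the residues $v\bmod 2^s$, $w\bmod 2^{\alpha_s}$, $u'\bmod 2$, $z\bmod 2$. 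I would then match the extension group with the prescribed answer $H^s\cong\Z_{2^{s+1}}\oplus\Z_{2^{\alpha_s}}\oplus\Z_{2^s}\oplus\Z_2\oplus\Z_2$ of (\ref{H^s}): equivalently, I would put the $5\times 5$ presentation matrix of the quotient of $\Z_{(2)}\{\widetilde{\iota_7}^{43}\}\oplus T$ by $R$ into Smith normal form and compare its elementary divisors with those of the target. Isolating the effect of $v$ in the $B$–$\widetilde{\iota_7}^{43}$ block shows that the target pair $\{2^{s+1},2^s\}$ is reproduced only when $2^s\mid v$: a smaller valuation $\nu_2(v)<s$ turns this pair into $\{2^{\min\{s,\nu_2(v)\}},2^{2s+1-\min\{s,\nu_2(v)\}}\}$, which cannot equal $\{2^{s+1},2^s\}$. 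The analogous analysis of the $C$–$\widetilde{\iota_7}^{43}$ block reproduces $\{2^{\alpha_s},2^s\}$ only when $2^{\alpha_s}\mid w$. This forces $2^{\alpha_s}\mid w$ and $2^s\mid v$.

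The main obstacle is precisely this last matching, because reducing $R$ modulo $2$ only yields the weaker conditions $2\mid v$ and $2\mid w$: a coefficient of valuation $0<\beta<\alpha_s$ (resp.\ $0<\beta<s$) leaves the number of cyclic summands $\dim_{\mathbb{F}_2}(H^s/2H^s)$ unchanged while silently replacing an invariant-factor pair $(2^{\alpha_s},2^s)$ by $(2^{\beta},2^{s+\alpha_s-\beta})$. Detecting this demands tracking the full $2$-adic filtration $\dim_{\mathbb{F}_2}(2^kH^s/2^{k+1}H^s)$ (equivalently, all elementary divisors) rather than a single mod-$2$ reduction, and care is needed to verify that the residual coefficients $u',z$ in the $\Z_2$-directions do not perturb the $B$- and $C$-blocks when the Smith normal form is assembled.
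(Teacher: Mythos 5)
Your proposal is correct and takes essentially the same route as the paper: both arguments pin down $v$ and $w$ by matching the explicit presentation of $H^s$ (the quotient of $\Z_{(2)}\{\widetilde{\iota_{7}}^{43}\}\oplus T$ by the single relation $R$) against the known decomposition (\ref{H^s}) and ruling out smaller $2$-adic valuations by an elementary-divisor (Smith normal form) contradiction. The one step you flag as delicate is handled in the paper by first showing $u'=z=0$ --- if either were odd, the relation $R$ would eliminate a generator and leave $H^s$ with too few cyclic summands to agree with (\ref{H^s}) --- after which the valuation analysis of $w$ and then of $v$ proceeds exactly as you describe.
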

The proof of the Lemma is elementary and will be postponed to the Appendix. Assuming the Lemma one gets the remaining coefficients in (\ref{partial7r1s [j15j24]}) as follows.

\begin{lemma}\label{lem k l}
	$k=2^{min\{r,s\}}k'$ and  $l=2^{min\{s-1,1\}}l'$,  for some $k',l'\in \Z$.
\end{lemma}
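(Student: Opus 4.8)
The plan is to deduce the divisibility of $k$ and $l$ from that of $v$ and $w$ (Lemma \ref{lem detemine v,w}) by a naturality argument comparing $(\partial_{r,1}^{s})_{7\ast}$ with $(\partial_{\infty,0}^{s})_{7\ast}$, exactly in the spirit of Lemma \ref{lem detemine x,y,t'}. First I would set up the comparison map. Let $\mathrm{pr}\colon S^4\vee S^3\to S^4\vee S^3$ collapse the summand $S^3$, so that $\mathrm{pr}\,j_1=j_1$ and $\mathrm{pr}\,j_2=0$. Since $\mathrm{pr}(f_{r,1}^sj_1)=\mathrm{pr}(j_1(2^s\iota_4)+j_2\eta_3)=j_1(2^s\iota_4)=f_{\infty,0}^sj_1$ and $\mathrm{pr}(f_{r,1}^sj_2)=\mathrm{pr}(j_2(2^r\iota_3))=0=f_{\infty,0}^sj_2$, the square with source map $\mu'=\mathrm{id}$ and target map $\mu=\mathrm{pr}$ commutes. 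By Lemma \ref{J(X,A)toJ(X',A')} it induces a map of fibration sequences $J(\widehat{\mu},\mu')\colon F^s_{r,1}\to F^s_{\infty,0}$ whose induced self-map of the base $S^5\vee S^4$ is $\Sigma\mu'=\mathrm{id}$; in particular $[j^5_1,j^4_2]$ is preserved. Combined with the naturality of $\partial$ (Lemma \ref{partial calculate1}) this gives the identity
$$J(\widehat{\mu},\mu')_\ast\,(\partial_{r,1}^{s})_{7\ast}([j^5_1,j^4_2])=(\partial_{\infty,0}^{s})_{7\ast}([j^5_1,j^4_2]).$$

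Next I would compute $J(\widehat{\mu},\mu')_\ast$ on the generators occurring in $(\ref{partial7r1s [j15j24]})$. On the first James filtration $J_1=S^4\vee S^3$ the map is $\mathrm{pr}$, hence $J(\widehat{\mu},\mu')_\ast(j_{r,1}^sj_1\nu_4)=j^s_{0}j_1\nu_4$ and $J(\widehat{\mu},\mu')_\ast(j_{r,1}^sj_1\Sigma\nu')=j^s_{0}j_1\Sigma\nu'$, while $j_{r,1}^sj_2\nu'\eta_6$ maps to $0$ since $\mathrm{pr}\,j_2=0$, and the section $j_{S^6}$ maps to $0$ since its defining cell $S^3\wedge S^2$ (with trivial attaching map) is annihilated by $\mu\wedge\mu'=\mathrm{pr}\wedge\mathrm{id}$, using the last square of Lemma \ref{J(X,A)toJ(X',A')}. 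For the remaining term I would invoke naturality of the pinch $p_{r,\epsilon}^s$: since $\mathrm{pr}\wedge\mathrm{id}$ fixes the summand $S^4\wedge S^3$ (i.e. $\iota^{43}_7$) and kills the summand $S^3\wedge S^4$ (i.e. $\iota^{34}_7$), formula $(\ref{rhorse})$ gives $(\mu\wedge\mu')_\ast\rho_{r,1}^s=2^{\max\{s-r,0\}}\iota^{43}_7$, whence
$$J(\widehat{\mu},\mu')_\ast\widetilde{\rho}_{r,1}^s=2^{\max\{s-r,0\}}\,\widetilde{\iota_{7}}^{43}+\delta,\qquad \delta\in Coker(\gamma_{\infty,0}^{s})_{7\ast}.$$

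Finally I would compare coefficients in the direct-sum decomposition of $\pi_7(F^s_{\infty,0})$ from Lemma \ref{lem pi7Frse}, using $x=2^{\min\{r,s\}}t$ and $y'=0$ from Lemma \ref{lem detemine x,y,t'}. The $\widetilde{\iota_{7}}^{43}$-coordinate matches automatically, as $x\cdot2^{\max\{s-r,0\}}=2^st=t'$, and the $j^s_0j_1\nu_4$- and $j^s_0j_1\Sigma\nu'$-coordinates yield $k=v-x\delta_{\nu_4}$ and $l=w-x\delta_{\Sigma\nu'}$, where $\delta_{\nu_4},\delta_{\Sigma\nu'}$ are the corresponding coordinates of $\delta$. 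Since $2^s\mid v$ and $\min\{r,s\}\le s$ force $2^{\min\{r,s\}}\mid v$, while $2^{\min\{r,s\}}\mid x$, I obtain $2^{\min\{r,s\}}\mid k$; and since $2^{\min\{2,s-1\}}\mid w$ together with $\min\{s-1,1\}\le\min\{2,s-1\}$ and $\min\{s-1,1\}\le1\le\min\{r,s\}$, I obtain $2^{\min\{s-1,1\}}\mid l$. One only has to note that the identity lives in $\Z_{2^{s+1}}$ along the $j^s_0j_1\nu_4$-axis whereas $k$ is recorded in $\Z_{2^{s+2}}$, but the divisibility lifts because $2^{\min\{r,s\}}\mid 2^{s+1}$.

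The step I expect to be the main obstacle is the precise evaluation $J(\widehat{\mu},\mu')_\ast\widetilde{\rho}_{r,1}^s=2^{\max\{s-r,0\}}\widetilde{\iota_{7}}^{43}+\delta$: one must pin down the leading coefficient through the filtration-two map $\mathrm{pr}\wedge\mathrm{id}$ and, simultaneously, be certain that the undetermined correction $\delta$ enters the final equations only through the scalar $x=2^{\min\{r,s\}}t$, so that its unknown coordinates cannot destroy the required divisibility. Everything else is routine bookkeeping in finite cyclic $2$-groups.
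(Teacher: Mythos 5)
Your route is the paper's route: you compare $(\partial_{r,1}^{s})_{7\ast}$ with $(\partial_{\infty,0}^{s})_{7\ast}$ through the map of cofibration sequences whose middle arrow is your $\mathrm{pr}$ (the paper's $\bar{j}_1=(j_1,0)$) and whose source arrow is the identity, use that the induced map $\chi\colon F^{s}_{r,1}\to F^{s}_{\infty,0}$ satisfies $\chi_{\ast}(\partial^{s}_{r,1})_{7\ast}([j_1^5,j_2^4])=(\partial^{s}_{\infty,0})_{7\ast}([j_1^5,j_2^4])$, and then read off the $j_1\nu_4$- and $j_1\Sigma\nu'$-coordinates using Lemma \ref{lem detemine v,w} together with $x=2^{\min\{r,s\}}t$. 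Your identification $\chi_{\ast}\widetilde{\rho}_{r,1}^{s}=2^{\max\{s-r,0\}}\widetilde{\iota_{7}}^{43}+\delta$ with $\delta$ in the image of $j^{s}_{0\ast}$ is exactly the paper's ansatz, and your closing remark about $\Z_{2^{s+1}}$ versus $\Z_{2^{s+2}}$ is handled correctly.

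The one step that is not justified as written is the assertion that $j_{S^6}$ (hence $j_{S^6}\eta_6$) is sent to $0$. Your filtration argument only shows that the composite of $\chi j_{S^6}$ with the pinch to $J_2/J_1$ is null, i.e. that $\chi_{\ast}(j_{S^6}\eta_6)$ has no component along $\widetilde{\iota_{7}}^{43}$ or the target's $j_{S^6}\eta_6$ and therefore lies in $j^{s}_{0\ast}\pi_7(S^4\vee S^3)$; it does not show the element vanishes, because the wedge splitting of the $S^6$ summand depends on a choice of null-homotopy of $\gamma^{s}_{r,1}|_{S^3\wedge S^2}$ and need not be preserved by $\chi$. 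If $\chi_{\ast}(j_{S^6}\eta_6)\neq 0$, your two coordinate equations acquire extra terms and the stated divisibilities no longer follow from what you wrote. The repair is the one the paper uses: since $j_{S^6}\eta_6$ has order $2$, its image is $2$-torsion, so its $j^{s}_{0}j_1\nu_4$-coordinate is divisible by $2^{s}$ and its $j^{s}_{0}j_1\Sigma\nu'$-coordinate by $2^{\min\{s-1,1\}}$ (the paper's coefficients $2^{s}t_1'$ and $2^{\min\{s-1,1\}}t_2'$), which is exactly enough to preserve the conclusions $2^{\min\{r,s\}}\mid k$ and $2^{\min\{s-1,1\}}\mid l$. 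With that adjustment your proof coincides with the paper's.
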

\begin{proof}
	We have following commutative diagrams  by Lemma \ref{J(X,A)toJ(X',A')}, where $F_{r,1}^{s}\xrightarrow{\chi}F_{\infty,0}^{s}$ is induced by the map $C_{r,1}^{5,s}\xrightarrow{\bar{\chi}}  C_{\infty,0}^{5,s}$ in the right commutative diagrams
	\begin{align}
		\footnotesize{\xymatrix{
				\pi_8(S^5\vee S^4)\ar[r]^-{(\partial_{r,1}^{s})_{7\ast}}\ar@{=}[d]_{id} &\pi_7 ( F_{r,1}^{s})\ar[r]^-{p_{r,1\ast }^{s}}\ar[d]^{\chi_{\ast}}& \pi_7((S^4\vee S^3)^{\wedge 2})\ar[d]^{(\bar{j}_1\wedge id)_{\ast}}\\
				\pi_8 (S^5\vee S^4) \ar[r]^-{(\partial_{\infty,0}^s)_{7\ast}} & \pi_7(F_{\infty,0}^s)\ar[r]^-{p_{\infty,0\ast}^s}&\pi_7((S^4\vee S^3)^{\wedge 2}), }}
		\footnotesize{\xymatrix{
				S^4\vee S^3\ar[r]^-{f_{r,1}^{s}}\ar@{=}[d]_{id} &S^4\vee  S^3\ar[r]\ar[d]_{\bar{j}_1=(j_1, 0)}& C_{r,1}^{5,s} \ar[d]^{\bar{\chi} }\\
				S^4\vee S^3 \ar[r]^{f^s_{\infty,0}} &  S^4\vee S^3\ar[r]& C_{\infty,0}^{5,s},}}  \label{diag Crs1 to C infty s0}
	\end{align}
	where $\chi j_{r,1}^{s}=j_{0}^s\bar{j}_1$ , $\footnotesize{\xymatrix{S^4\vee S^3\ar@/_1pc/[rr]_{j_{0}^s\bar{j}_1}\ar@{^{(}->}[r]^{j_{r,1}^s}   &  F_{r,1}^{s}\ar[r]^{\chi} &F_{\infty,0}^s }}$.
	
	By noting that $j_{S^6}\eta_6$ is 2-torsion, for $s\geq 1$, we suppose that
	\begin{align}
		&\chi_\ast(j_{S^6}\eta_6)\!=\!2^st'_1j_{0}^sj_1\nu_4+2^{min\{s-1,1\}}t'_2j_{0}^sj_1\Sigma\nu'
		+t'_3j_{0}^sj_2\nu'\eta_6+t'_4j_{0}^s[j_1,j_2]\eta_6, \nonumber\\
		&\chi_\ast(\widetilde{\rho}_{r,1}^s)\!=
		x_{r}^s\widetilde{\iota_{7}}^{43}+t_1j_{0}^sj_1\nu_4+t_2j_{0}^sj_1\Sigma\nu'
		+t_3j_{0}^sj_2\nu'\eta_6+t_4j_{0}^s[j_1,j_2]\eta_6.\nonumber
	\end{align}
	where $x_r^s=\left\{\!\!\!
	\begin{array}{ll}
		1, & \hbox{$s\leq r$;} \\
		2^{s-r}, & \hbox{$s\geq r$.}
	\end{array}
	\right.$
	$t_1,t'_1\in \Z, t_2,t_2'\in \Z_4, t_3, t_3',t_4, t_4'\in \Z_2$.

	From (\ref{partial7r1s [j15j24]}) and Lemma \ref{lem detemine x,y,t'}
	\begin{align}
		&\chi_{\ast}(\partial_{r,1}^s)_{7\ast}([j_1^5,j_2^4])=\chi_{\ast}(2^{min\{r,s\}}t\widetilde{\rho}_{r,1}^s+j_{S^6}\eta_6+kj_{r,1}^sj_1\nu_4
		+lj_{r,1}^sj_1\Sigma\nu'+uj_{r,1}^sj_2\nu'\eta_6) \nonumber\\
		&=2^st \widetilde{\iota_{7}}^{43}+(2^{min\{r,s\}}tt_1+2^st_1'+k)j_{0}^sj_1\nu_4+(2^{min\{r,s\}}tt_2+2^{min\{s-1,1\}}t_2'+l)j_{0}^sj_1\Sigma\nu'\nonumber.
	\end{align}
	By the left commutative diagram in (\ref{diag Crs1 to C infty s0}), $\chi_{\ast}(\partial_{r,1}^s)_{7\ast}([j_1^5,j_2^4])$ also equals to
	\begin{align}
		&(\partial_{\infty,0}^s)_{7\ast}([j_1^5,j_2^4])
		=2^st\widetilde{\iota_{7}}^{43}+vj^s_{0}j_1\nu_4+wj^s_{0}j_1\Sigma\nu'
		+u'j^s_{0}j_2\nu'\eta_6\!+\!zj^s_{0}[j_1,j_2]\eta_6.\nonumber
	\end{align}
	Thus~ $2^{min\{r,s\}}tt_1+2^st_1'+k=v$ and $2^{min\{r,s\}}tt_2+2^{min\{s-1,1\}}t_2'+l=w$.
	
	By Lemma \ref{lem detemine v,w}, $k=2^{min\{r,s\}}k'$ and  $l=2^{min\{s-1,1\}}l'$,  $k',l'\in \Z$.
	
\end{proof}

\begin{remark}
	The case $s=\infty$ is not allowed in the above proof of Lemma \ref{lem k l}, since we get the maps $F_{r,1}^{\infty}\xrightarrow{\chi}F_{\infty,0}^{\infty}$ and $C_{r,1}^{5,\infty}\xrightarrow{\bar{\chi}}  C_{\infty,0}^{5,\infty}\simeq (S^4\vee S^3)\bigcup_{0}C(S^4\vee S^3)$ where the targets of the maps are not covered by Lemma \ref{lem detemine v,w}. However the fibration  $F_{\infty,0}^{\infty}\rightarrow C_{\infty,0}^{5,\infty} \rightarrow S^5\vee S^4$  splits, which implies that $(\partial_{\infty,0}^\infty)_{7\ast}=0$ in the left commutative diagram of  (\ref{diag Crs1 to C infty s0}). Hence it is easy to see that Lemma \ref{lem k l} is also true for $s=\infty$.
\end{remark}
From Lemma \ref{lem detemine x,y,t'} and Lemma \ref{lem k l}, one gets $ Coker(\partial_{r,1}^s)_{7\ast} $ in the following Lemma  whose proof is also postponed to the Appendix.
\begin{lemma}\label{lem Cokerpartial rs1}
	$Coker(\partial_{r,1}^{s})_{7\ast}\cong\Z_{2^{min\{s-\epsilon_r,2\}}}\oplus\Z_{2^{min\{s+1,r+1\}}}\oplus\Z_{2^{s+2}}, \infty\geq r\geq 1.$
\end{lemma}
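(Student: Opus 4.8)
The plan is to present $Coker(\partial_{r,1}^{s})_{7\ast}=\pi_{7}(\F)/\mathrm{im}\,(\partial_{r,1}^{s})_{7\ast}$ as an explicit finitely generated $\Z_{(2)}$-module and to reduce it by Tietze transformations and Smith normal form. By Lemma \ref{lem pi7Frse} with $\epsilon=1$ together with (\ref{Cokergammarse}), I would fix the five generators $e_1=j_{r,1}^sj_1\nu_4$, $e_2=j_{r,1}^sj_1\Sigma\nu'$, $e_3=j_{r,1}^sj_2\nu'\eta_6$, $e_4=j_{S^6}\eta_6$, $e_5=\widetilde{\rho}_{r,1}^s$, of orders $2^{s+2},4,2,2,\infty$ respectively, so that $\pi_{7}(\F)=\Z_{2^{s+2}}\{e_1\}\oplus\Z_4\{e_2\}\oplus\Z_2\{e_3\}\oplus\Z_2\{e_4\}\oplus\Z_{(2)}\{e_5\}$. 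The four relations defining the cokernel are the images of the generators of $\pi_8(S^5\vee S^4)$, which are already recorded: $j_1^5\nu_5\mapsto 2^{2s}e_1-2^{s-1}(2^s-1)e_2+e_3$ by (\ref{partial7 j1nu5}); $j_2^4\Sigma\nu'\eta_7\mapsto 0$ by (\ref{partial7j^4Sigmanu'eta_7}); $j_2^4\nu_4\eta_7\mapsto \epsilon_r e_3$ by (\ref{partial7 j2^4nu4eta7}); and, invoking Lemmas \ref{lem detemine x,y,t'} and \ref{lem k l}, $[j_1^5,j_2^4]\mapsto 2^{min\{r,s\}}t\,e_5+e_4+2^{min\{r,s\}}k'\,e_1+2^{min\{s-1,1\}}l'\,e_2+u\,e_3$ with $t$ odd and $k',l'\in\Z$, $u\in\Z_2$.

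Next I would eliminate the two generators that occur with unit coefficient. Since $e_4$ appears with coefficient $1$ in the image of $[j_1^5,j_2^4]$, I solve for $e_4$ and feed the result into the internal relation $2e_4=0$; using $2e_3=0$ kills the $u$-term and, since $min\{s-1,1\}+1=min\{s,2\}$, leaves the single relation $2^{min\{r,s\}+1}k'e_1+2^{min\{s,2\}}l'e_2+2^{min\{r,s\}+1}t\,e_5=0$. Likewise $e_3$ appears with coefficient $1$ in the image of $j_1^5\nu_5$, so I eliminate it; its internal relation $2e_3=0$ becomes $2^{2s+1}e_1=2^s(2^s-1)e_2$, and the image of $j_2^4\nu_4\eta_7$ becomes $\epsilon_r\bigl(2^{2s}e_1-2^{s-1}(2^s-1)e_2\bigr)=0$. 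Finally the substitution $\widehat{e}_5=k'e_1+t\,e_5$, legitimate because $t$ is a unit in $\Z_{(2)}$, converts the surviving relation into $2^{min\{r,s\}+1}\widehat{e}_5+2^{min\{s,2\}}l'e_2=0$. I would stress at this point that all three nuisance parameters disappear: $u$ vanishes because $2e_3=0$, $k'$ is absorbed into $\widehat{e}_5$, and $l'$ will be killed in the last step.

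The problem now reduces to the subgroup $P=\langle e_1,e_2\rangle$ cut out by $2^{s+2}e_1=0$, $4e_2=0$, $2^{2s+1}e_1=2^s(2^s-1)e_2$ and, when $r=1$, $2^{2s}e_1=2^{s-1}(2^s-1)e_2$, together with the decoupled $\widehat{e}_5$-relation. The core computation is to prove $P\cong\Z_{2^{s+2}}\{e_1\}\oplus\Z_{2^{min\{s-\epsilon_r,2\}}}$, which I would settle by a short case split on $s$ versus $2$ and on $r=1$ versus $r\geq 2$, repeatedly using that $2^s-1$ is a unit in $\Z_{(2)}$ to read off the order of $e_2$; in the small cases ($s=1$, and $s=2$ with $r=1$) the generator $e_2$ collapses into $\langle e_1\rangle$, which is exactly why the first summand degenerates to $\Z_{2^{0}}=0$. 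Since the order of $e_2$ in $P$ divides $2^{min\{s,2\}}$, the term $2^{min\{s,2\}}l'e_2$ vanishes in $P$, so the $\widehat{e}_5$-relation decouples to $2^{min\{r,s\}+1}\widehat{e}_5=0$ and contributes $\Z_{2^{min\{r,s\}+1}}=\Z_{2^{min\{s+1,r+1\}}}$. Assembling $P$ with this summand yields $\Z_{2^{min\{s-\epsilon_r,2\}}}\oplus\Z_{2^{min\{s+1,r+1\}}}\oplus\Z_{2^{s+2}}$.

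The main obstacle is not a single deep step but the careful $2$-local bookkeeping: verifying that $u,k',l'$ genuinely cancel, and controlling the low-$s$ boundary cases where $e_2$ is absorbed into $\langle e_1\rangle$ and the first cyclic factor degenerates. One must also thread the conventions $2^{\infty}=0$ and $\epsilon_\infty=0$ through the case analysis so that the values $r=\infty$ (and $s=\infty$) allowed in the statement are covered uniformly.
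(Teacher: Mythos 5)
Your proposal is correct and follows essentially the same route as the paper's Appendix proof: both present $\pi_7(\F)$ via Lemma \ref{lem pi7Frse} and (\ref{Cokergammarse}), impose the four boundary relations (\ref{partial7 j1nu5})--(\ref{partial7 j2^4nu4eta7}) and (\ref{partial7r1s [j15j24]}) with the coefficients pinned down by Lemmas \ref{lem detemine x,y,t'} and \ref{lem k l}, eliminate the unit-coefficient generators $j_{S^6}\eta_6$ and $j_{r,1}^sj_2\nu'\eta_6$, absorb $k'$ by the change of basis using that $t$ is a $2$-local unit, and observe that the $l'$-term already lies in the span of the remaining relations so the $\widetilde{\rho}_{r,1}^s$-summand decouples. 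The only slip is in your parenthetical about the boundary cases: the first summand $\Z_{2^{\min\{s-\epsilon_r,2\}}}$ degenerates to $0$ (i.e.\ $e_2$ is absorbed into $\langle e_1\rangle$) only when $r=s=1$, whereas for $s=2,\,r=1$ and for $s=1,\,r\geq 2$ it merely drops to $\Z_2$; your stated formula for $P$ is nevertheless correct.
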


\begin{lemma}\label{split for pi7(Crs)}
	The following short exact sequence is split for $\infty\geq r\geq 1$.
	\begin{align}
		&0\!\rightarrow\! Coker(\partial_{r,1}^s)_{7\ast} \!\rightarrow\! \pi_{7}(C_{r,1}^{5,s})\rightarrow Ker(\partial_{r,1}^s)_{6\ast}\rightarrow \!0 \label{exact for pi7(Crs1)}
	\end{align}
	where $Ker(\partial_{r,1}^s)_{6\ast}$ and $Coker(\partial_{r,1}^s)_{7\ast}$ are given by (\ref{Kerpartial rse 6}) and Lemma \ref{lem Cokerpartial rs1}.
\end{lemma}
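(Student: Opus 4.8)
The sequence (\ref{exact for pi7(Crs1)}) is the relevant segment of the homotopy exact sequence of the fibration $F_{r,1}^{s}\to C_{r,1}^{5,s}\xrightarrow{q_{r,1}^s} S^5\vee S^4$, so $q_{r,1\ast}^s$ carries $\pi_7(C_{r,1}^{5,s})$ onto $Ker(\partial_{r,1}^s)_{6\ast}$ with kernel the subgroup $Coker(\partial_{r,1}^s)_{7\ast}$. A short exact sequence of abelian groups whose quotient is $\Z_4$ (or, when $r=\infty$, $\Z_{(2)}\oplus\Z_4$) splits as soon as the cyclic torsion generator is the image of an element of order $4$, the free summand splitting off automatically. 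Writing $g_r=\epsilon_r j_1^5\eta_5\eta_6+j_2^4\Sigma\nu'$ for that generator, as recorded in (\ref{Kerpartial rse 6}), the Lemma reduces to producing $\xi\in\pi_7(C_{r,1}^{5,s})$ with $q_{r,1\ast}^s(\xi)=g_r$ and $4\xi=0$.

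The engine is the map $\bar\theta:\m{r}{3}\to C_{r,1}^{5,s}$ of the ladder (\ref{diagramMr to Crse}), for which $q_{r,1}^s\bar\theta\simeq j_2^4 p_3$; it pushes classes of $\pi_7(\m{r}{3})$ into $\pi_7(C_{r,1}^{5,s})$ over the $S^4$-summand of the quotient. When $r\ge 2$, and also for $r=\infty$, we have $\epsilon_r=0$, so $g_r=j_2^4\Sigma\nu'$, and the splitting of $\pi_7(\m{r}{3})$ established in Section 3 supplies a class $\alpha$ of order $4$ with $p_{3\ast}\alpha=\Sigma\nu'$. Then $\xi:=\bar\theta_\ast\alpha$ satisfies $q_{r,1\ast}^s\xi=j_2^4\Sigma\nu'=g_r$ and $4\xi=\bar\theta_\ast(4\alpha)=0$, the required order-$4$ lift. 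This disposes of every case except $r=1$.

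For $r=1$ the generator $g_1=j_1^5\eta_5\eta_6+j_2^4\Sigma\nu'$ carries an $S^5$-component lying outside the image of $\bar\theta_\ast$, since $\m{1}{3}$ has no cell above dimension $4$. What $\bar\theta$ still reaches is $2g_1=2j_2^4\Sigma\nu'$: because $\pi_7(\m{1}{3})\cong\Z_2\oplus\Z_2$ contains a class $\alpha'$ of order $2$ with $p_{3\ast}\alpha'=2\Sigma\nu'$, the class $c:=\bar\theta_\ast\alpha'$ has $q_{r,1\ast}^s c=2g_1$ and $2c=0$. Choosing any lift $\xi_1$ of $g_1$ and writing $2\xi_1=c+a$ with $a\in Coker(\partial_{1,1}^s)_{7\ast}$ gives $4\xi_1=2a$, so the extension splits precisely when $2a\in 4\,Coker(\partial_{1,1}^s)_{7\ast}$; equivalently, $\xi_1$ can be corrected to an order-$4$ lift iff $a\in 2\,Coker(\partial_{1,1}^s)_{7\ast}+Coker(\partial_{1,1}^s)_{7\ast}[2]$.

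This divisibility is the crux, and it is not formal: by Lemma \ref{lem Cokerpartial rs1} the group $Coker(\partial_{1,1}^s)_{7\ast}\cong\Z_{2^{\min\{s-1,2\}}}\oplus\Z_4\oplus\Z_{2^{s+2}}$ has a summand of exponent $2^{s+2}$, so the inclusion $4\,Coker(\partial_{1,1}^s)_{7\ast}\subseteq 2\,Coker(\partial_{1,1}^s)_{7\ast}$ is proper and the condition $2a\in 4\,Coker(\partial_{1,1}^s)_{7\ast}$ is a genuine restriction. The plan is to settle it by a direct computation of $4\xi_1$: resolve $\xi_1$ through the generators of $\pi_7(F_{r,1}^s)$ fixed in Lemma \ref{lem pi7Frse}, feed in the boundary formulas (\ref{partial7 j1nu5})--(\ref{partial7 j2^4nu4eta7}) and (\ref{partial7r1s [j15j24]}) to read off $a$ modulo $2\,Coker(\partial_{1,1}^s)_{7\ast}$, and verify that its component in each cyclic summand is even. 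A more conceptual alternative would realize the $j_1^5\eta_5\eta_6$-part from the inclusion $C_\eta^5\hookrightarrow C_1^5$, which is available at $s=\infty$ where the top cell of $C_{1,1}^{5,s}$ is attached by $\eta_3$ alone, and then descend to finite $s$ through the degree maps on the $4$-cell; but the dependable route is the explicit order computation, and that is exactly where the real work of the $r=1$ case lies.
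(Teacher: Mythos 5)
Your treatment of the cases $r\geq 2$ and $r=\infty$ is correct and is the same as the paper's: the order-$4$ class $\alpha\in\pi_{7}(\m{r}{3})$ lifting $\Sigma\nu'$ is pushed forward by $\bar{\theta}$ to an order-$4$ lift of $j_2^4\Sigma\nu'$, and the free summand (for $r=\infty$) splits off for free. The problem is the case $r=1$, which you correctly isolate as the crux but do not actually prove. You reduce the splitting to the divisibility condition $2a\in 4\,Coker(\partial_{1,1}^s)_{7\ast}$, where $2\xi_1=c+a$ for a chosen lift $\xi_1$ of $g_1=j_1^5\eta_5\eta_6+j_2^4\Sigma\nu'$, and then announce a ``plan'' to verify it by resolving $\xi_1$ through the generators of $\pi_7(F_{r,1}^s)$ and the boundary formulas. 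That plan cannot succeed as stated: the boundary formulas (\ref{partial7 j1nu5})--(\ref{partial7 j2^4nu4eta7}) and (\ref{partial7r1s [j15j24]}) determine the two end groups of the extension (\ref{exact for pi7(Crs1)}) but carry no information about the extension class itself; the element $a$ is exactly the datum (a Toda-bracket/coextension indeterminacy) that the long exact sequence of the fibration does not see. So the step ``verify that its component in each cyclic summand is even'' is not a computation you have set up the means to perform, and the argument has a genuine gap precisely at the point you identify as ``where the real work lies.''

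The missing input is external: the paper invokes Lemma 1.6 of Morisugi--Mukai (\emph{Lifting to mod $2$ Moore spaces}), which produces an element $\widetilde{\alpha_2}\in\pi_7(C_{1}^{5,1})$ of order $4$ with $2\widetilde{\alpha_2}=\bar{\theta}\widetilde{\eta}_3\eta_5\eta_6$, where $\widetilde{\eta}_3\in\pi_5(M_2^3)$ lifts $\eta_4$. Since $q^1_{1,1\ast}(\bar{\theta}\widetilde{\eta}_3\eta_5\eta_6)=j_2^4\eta_4\eta_5\eta_6=2(j_1^5\eta_5\eta_6+j_2^4\Sigma\nu')$, one gets $q^1_{1,1\ast}(\widetilde{\alpha_2})=\pm g_1$, so $\widetilde{\alpha_2}$ is the required order-$4$ lift for $r=s=1$; for $\infty\geq s\geq 2$ the paper then pushes $\widetilde{\alpha_2}$ forward along the map $C_1^{5,1}\to C_1^{5,s}$ induced by $(j_1 2^{s-1}\iota_4,\,j_2\iota_3)$ on the $3$-skeleton, which preserves $q_{\ast}$-images on the quotient spheres. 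Note also that the paper's direction of travel is from $s=1$ up to general $s$, not from $s=\infty$ down as in your sketched ``conceptual alternative.'' To complete your proof you would need either to cite this lifting result or to supply an equivalent order-$4$ construction; without it the $r=1$ case is unproven.
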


\begin{proof}
	For $r\geq 2$, there is $\alpha\in \pi_{7}(\m{r}{3})$ with order 4, which is a lift of $\Sigma\nu'\in Ker\partial_{6\ast}$. By the commutative diagram (\ref{diagramMr to Crse}), $\bar{\theta}\alpha\in \pi_{7}(C_{r,1}^{5,s}) $ is a lift of $j_2^4\Sigma\nu'\in Ker(\partial_{r,1}^s)_{6\ast}$. So the short exact sequences (\ref{exact for pi7(Crs1)}) splits for $r\geq 2$, so is for $r=\infty$.
	
	For $r=1$, There is an induced map  $M_{2}^{3} \stackrel{\bar{\theta}}\hookrightarrow C_{1,1}^{5,1}=C_{1}^{5,1}$ from the left commutative diagram (\ref{diagramMr to Crse}). By Lemma 1.6. of \cite{Mukailifting}, there is an  element $\widetilde{\alpha_2}\in \pi_7(C_{1}^{5,1})$ with order 4 such that $2\widetilde{\alpha_2}=\bar{\theta}\widetilde{\eta}_3\eta_5\eta_6$ where $\widetilde{\eta}_3 \in \pi_5(M_{2}^{3})$ is a lift of $\eta_4$, i.e., $p_3\widetilde{\eta}_3=\eta_4$. We have the following commutative diagram
	\begin{align} \footnotesize{\xymatrix{
				\pi_{7}(S^5)\ar@/^1.5pc/[rr]^{\eta_{4\ast}\cong}\ar[r]^{~~~\widetilde{\eta}_{3\ast}}&  \pi_{7}(M_{2}^{3})\ar[r]^-{p_{3\ast}}\ar[d]^{\bar{\theta}_{\ast} }&Ker \partial_{6\ast}=\Z_2\{\Sigma\nu'\} \ar@{^{(}->}[d]_{j^4_{2\ast}}\\
				&\pi_{7}(C_{1}^{5,1}) \ar[r]^-{q^1_{1,1\ast}}&Ker (\partial_{1,1}^1)_{6\ast}=\Z_4\{j_1^5\eta_5\eta_6+j_2^4\Sigma\nu'\},}}\nonumber
	\end{align}
	$q^1_{1,1\ast}\bar{\theta}_{\ast}\widetilde{\eta}_{3\ast}(\eta_5\eta_6)
	=q^1_{1,1\ast}(\bar{\theta}\widetilde{\eta}_{3}\eta_5\eta_6)=2q^1_{1,1\ast}(\widetilde{\alpha_2});$
	
	On the other hand
	
	$q^1_{1,1\ast}\bar{\theta}_{\ast}\widetilde{\eta}_{3\ast}(\eta_5\eta_6)=j^4_{2\ast}\eta_{4\ast}(\eta_5\eta_6)=2(j_1^5\eta_5\eta_6+j_2^4\Sigma\nu')$
	
	Hence $q^1_{1,1\ast}(\widetilde{\alpha_2})=\pm (j_1^5\eta_5\eta_6+j_2^4\Sigma\nu')$. Thus the short exact sequences (\ref{exact for pi7(Crs1)}) splits for $r=s=1$.
	
	For $\infty\geq s\geq 2$, there is a  commutative ladder
	\begin{align} \footnotesize{\xymatrix{
				S^4\vee S^3\ar[r]^-{f_{1,1}^1}\ar@{=}[d]_{id} &S^4\vee  S^3\ar[r]\ar[d]_{d^{s-1}_{0}}& C_{1}^{5,1} \ar[r]^{q^1_{1,1}}\ar[d]^{\bar{\mu} }& S^5\vee S^4\ar@{=}[d]_{id}\\
				S^4\vee S^3 \ar[r]^{f^s_{1,1}} &  S^4\vee S^3\ar[r]&C_{1}^{5,s} \ar[r]^{q_{1,1}^s}&S^5\vee S^4,}}
		\nonumber
	\end{align}
	where $d^{s-1}_{0}=(j_12^{s-1}\iota_4,j_2\iota_3)$.
	
	Then  $q^s_{1,1\ast}(\bar{\mu}\widetilde{\alpha_2})=q^1_{1,1\ast}(\widetilde{\alpha_2})=\pm (j_1^5\eta_5\eta_6+j_2^4\Sigma\nu')$. It implies the short exact sequences (\ref{exact for pi7(Crs1)}) also splits for $r=1,  \infty\geq s\geq 2$.
\end{proof}

So\\
$\pi_{7}(C_{r,1}^{5,s})\cong \left\{
\begin{array}{ll}
	\Z_{2^{min\{s-\epsilon_r,2\}}}\oplus\Z_{2^{min\{s+1,r+1\}}}\oplus\Z_{2^{s+2}}\oplus \Z_4, & \hbox{$r\geq1$;}\\
	\Z_{2^{min\{s,2\}}}\oplus\Z_{2^{s+1}}\oplus\Z_{2^{s+2}}\oplus \Z_4\oplus\Z_{(2)}, & \hbox{$r=\infty$.}
\end{array}
\right.$

From $\pi_{7}(C_{r,1}^{5,\infty})=\pi_7(S^4\vee C_r^5)\cong \pi_7(C_r^5)\oplus \pi_7(S^4)\oplus \pi_7(C_r^8)$,
$\pi_{7}(C_{\infty,1}^{5,s})=\pi_7(S^4\vee C^{5,s})\cong \pi_7(C^{5,s})\oplus \pi_7(S^4)\oplus \pi_7(C^{8,s})$ and  $\pi_7(C_r^8)=0$, $\pi_7(C^{8,s})\cong \Z_{2^{s+1}}$ (stable) in \cite{ZP},
we get
\begin{align}
	&\pi_{7}(C_{r}^{5})\cong \Z_4\oplus \Z_{2^{r+1}}, r\geq 1\nonumber\\
	&\pi_{7}(C^{5,s})=\pi_{7}(C_{r,1}^{5,s})\cong  \Z_{2^{min\{s,2\}}}\oplus\Z_{2^{s+2}}, s\geq 1. \nonumber\\
	&\pi_{7}(C_{r}^{5,s})=\pi_{7}(C_{r,1}^{5,s})\cong
	\Z_{2^{min\{s-\epsilon_r,2\}}}\oplus\Z_{2^{min\{s+1,r+1\}}}\oplus\Z_{2^{s+2}}\oplus \Z_4, r\geq1,s\geq 1. \nonumber
\end{align}

First author was partially supported by National Natural Science Foundation of China (Grant No. 11701430); second author  were partially supported by National Natural Science Foundation of China (Grant No. 11971461).




\begin{appendix}
	
	\section{}
	\label{sec:Appendix}
	\begin{lemma}\label{lem[lota4,iota4]H(nu4)}
		\begin{align}
			&(2^r\iota_4)\nu_4=2^{2r}\nu_4-2^{r-1}(2^r-1)\Sigma \nu';\nonumber\\
			&(-2^r\iota_4)\nu_4=2^{2r}\nu_4-2^{r-1}(2^r+1)\Sigma \nu'.\nonumber
		\end{align}
	\end{lemma}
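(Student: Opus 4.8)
My plan is to derive both identities from a single application of the Hopf--Hilton distributive law and then to specialize the degree. First I would recall the distributive formula for the action of a degree map on a homotopy group of a sphere: for $\alpha\in\pi_m(S^n)$ and an integer $d\ge 0$, writing $d\iota_n=\iota_n+\cdots+\iota_n$ and applying Hilton's decomposition of $(\iota_n+\cdots+\iota_n)\circ\alpha$ gives
\begin{align}
(d\iota_n)\circ\alpha=d\alpha+\binom{d}{2}[\iota_n,\iota_n]\circ H_2(\alpha)+\sum_{i\ge 3}w_i\circ H_i(\alpha),\nonumber
\end{align}
where $H_i$ is the $i$-th James--Hopf invariant and $w_i$ is a weight-$i$ iterated Whitehead product in $\iota_n$; this is the same circle of results as Lemma~4.5 and~5.4 of \cite{Toda} already quoted above (see also \cite{WhiteEHT}). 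Specializing to $n=4$ and $\alpha=\nu_4$, I would discard every term with $i\ge 3$ for dimension reasons: the $i$-th James--Hopf invariant carries $\pi_7(S^4)\cong\pi_6(\Omega S^4)$ into $\pi_6(\Omega S^{3i+1})\cong\pi_7(S^{3i+1})$, which vanishes for $i\ge 3$ since $3i+1>7$. Hence only the weight one and weight two terms survive.

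Next I would plug in the two facts already in force in the paper, namely $H_2(\nu_4)=\iota_7$ (Lemma~5.4 of \cite{Toda}) and $[\iota_4,\iota_4]=2\nu_4-\Sigma\nu'$, turning the formula into
\begin{align}
(d\iota_4)\circ\nu_4=d\nu_4+\binom{d}{2}(2\nu_4-\Sigma\nu')=d^2\nu_4-\binom{d}{2}\Sigma\nu'.\nonumber
\end{align}
Taking $d=2^r$ and using $\binom{2^r}{2}=2^{r-1}(2^r-1)$ yields the first identity $2^{2r}\nu_4-2^{r-1}(2^r-1)\Sigma\nu'$. For the second identity I would avoid extending the formula to negative $d$ directly, and instead exploit that post-composition $(-\iota_4)_\ast$ is an additive endomorphism of $\pi_7(S^4)$ together with the factorization $-2^r\iota_4=(-\iota_4)\circ(2^r\iota_4)$, so that $(-2^r\iota_4)_\ast=(-\iota_4)_\ast(2^r\iota_4)_\ast$. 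Applying $(-\iota_4)_\ast$ to the first identity and using the classical change-of-sign relation $(-\iota_4)_\ast\nu_4=-\nu_4+[\iota_4,\iota_4]=\nu_4-\Sigma\nu'$ (the $d=-1$ instance) together with $(-\iota_4)_\ast\Sigma\nu'=-\Sigma\nu'$, the coefficient of $\Sigma\nu'$ collapses to $-2^{2r}+2^{r-1}(2^r-1)=-2^{r-1}(2^r+1)$, giving $2^{2r}\nu_4-2^{r-1}(2^r+1)\Sigma\nu'$, as asserted.

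Since the appendix already flags this as an elementary computation, I do not expect a genuine obstruction; the only points requiring care are bookkeeping ones. The chief one is the treatment of the negative degree: one must ensure the quadratic dependence on $d$ is handled correctly, which is precisely why I prefer the multiplicative route $(-2^r\iota_4)_\ast=(-\iota_4)_\ast(2^r\iota_4)_\ast$ over a direct appeal to $\binom{-2^r}{2}$. A secondary point is that $\Sigma\nu'$ has order $4$, so the integer coefficients $2^{r-1}(2^r\mp 1)$ are only meaningful modulo $4$; this causes no trouble, since the statement records them as literal integers and it is exactly these reduced values that later feed the computations of $\pi_6(M_{2^r}^3)$ and $\pi_7(M_{2^r}^4)$.
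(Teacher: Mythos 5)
Your overall route is the same as the paper's --- the Hopf--Hilton distributive law, the values $H_2(\nu_4)=\iota_7$ and $[\iota_4,\iota_4]=2\nu_4-\Sigma\nu'$, and the factorization $-2^r\iota_4=(-\iota_4)\circ(2^r\iota_4)$ --- and your handling of the negative degree is in fact cleaner: you apply the homomorphism $(-\iota_4)_{\ast}$ to the first identity and use linearity, whereas the paper recomputes $(-2^r\iota_4)\nu_4$ twice, each time carrying an undetermined sign, and intersects the two lists of candidates. The one place where you do strictly less than the paper is the sign of the quadratic term. The paper writes $(2\iota_4)\nu_4=2\nu_4\pm\binom{2}{2}[\iota_4,\iota_4]H(\nu_4)$, i.e.\ it treats the relative sign among the conventions behind Toda's generators $\nu_4,\nu'$, the Hopf invariant, and the Hilton expansion as genuinely unknown; this leaves two candidates, $4\nu_4-\Sigma\nu'$ and $\Sigma\nu'$, and the paper eliminates the second by computing $(4\iota_4)\nu_4$ in two ways: on the wrong branch $(2\iota_4)(2\iota_4)\nu_4=(2\iota_4)\Sigma\nu'=2\Sigma\nu'$, while the distributive law only permits $16\nu_4-6\Sigma\nu'$ or $-8\nu_4+6\Sigma\nu'$, neither of which has trivial $\nu_4$-component. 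You instead assert the $+$ sign outright by appealing to the stated form of the distributive law. If your conventions are simultaneously those of \cite{Toda} and \cite{WhiteEHT} this is fine, but that compatibility is exactly what the paper declines to assume (note that it matters: the two branches give answers differing by more than a unit, and the wrong one would propagate into the computation of $y$ in the main text); to make your argument airtight you should either pin the conventions down explicitly or insert the same two-line consistency check on $(4\iota_4)\nu_4$, after which everything you wrote goes through. Your dimension count killing the weight-$\geq 3$ Hilton terms and your remark that only the residues mod $4$ of the $\Sigma\nu'$-coefficients are ultimately used are both correct.
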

	\begin{proof}
		By Proposition 2.10 of \cite{N.Oda} and (5,8) of \cite{Toda}, $(2\iota_4)\nu_4=2\nu_4\pm \binom{2}{2}[\iota_4,\iota_4]H(\nu_4))=4\nu_4-\Sigma\nu'$ or $\Sigma\nu'$, where $H$ is the second Hilton-Hopf invariant. If $(2\iota_4)\nu_4=\Sigma \nu'$, then $(4\iota_4)\nu_4=(2\iota_4)(2\iota_4)\nu_4=(2\iota_4)(\Sigma\nu')=2\Sigma \nu'$. On the other hand, $(4\iota_4)\nu_4=4\nu_4\pm \binom{4}{2}[\iota_4,\iota_4]H(\nu_4)=16\nu_4-6\Sigma\nu'$ or $-8\nu_4+6\Sigma\nu'$, which is not equal to $2\Sigma \nu'$. Thus $(2\iota_4)\nu_4\neq \Sigma \nu'$, i.e., $(2\iota_4)\nu_4= 4\nu_4-\Sigma\nu'$.
		\begin{align}
			&(2^r\iota_4)\nu_4=(2^{r-1}\iota_4)(2\iota_4)\nu_4=(2^{r-1}\iota_4)(4\nu_4-\Sigma\nu')=4(2^{r-1}\iota_4)\nu_4-2^{r-1}\Sigma\nu' \nonumber\\
			&=2^{2r}\nu_4-2^{r-1}(2^r-1)\Sigma \nu' ~~(\text{by induction}).\nonumber
		\end{align}
		\begin{align}
			&(-2^r\iota_4)\nu_4=(-\iota_4)(2^{r}\iota_4)\nu_4=(-\iota_4)(2^{2r}\nu_4-2^{r-1}(2^r-1)\Sigma \nu' ) \nonumber\\
			&=(-\iota_4)(2^{2r}\nu_4)+2^{r-1}(2^r-1)\Sigma \nu' =-2^{2r}\nu_4\pm\binom{2}{2}[\iota_4,\iota_4]H(2^{2r}\nu_4)+2^{r-1}(2^r-1)\Sigma \nu'\nonumber\\
			&=2^{2r}\nu_4-2^{r-1}(2^r+1)\Sigma \nu'~or~-3\cdot 2^{2r}\nu_4+(3\cdot2^{2r-1}-2^{r-1})\Sigma \nu' \label{-2rnu_41}\\
			&\text{On the other hand}\nonumber\\
			&(-2^r\iota_4)\nu_4=-2^r\nu_4\pm\binom{-2^r}{2}[\iota_4,\iota_4]H(\nu_4)=-2^r\nu_4\pm\binom{2^r+1}{2}[\iota_4,\iota_4]H(\nu_4)\nonumber\\
			&=2^{2r}\nu_4-2^{r-1}(2^r+1)\Sigma \nu'~or~-(2^{2r}+2^{r+1})\nu_4+2^{r-1}(2^r+1)\Sigma\nu'.\label{-2rnu_42}
		\end{align}
		Compare (\ref{-2rnu_41}) with (\ref{-2rnu_42}), we get $(-2^r\iota_4)\nu_4=2^{2r}\nu_4-2^{r-1}(2^r+1)\Sigma \nu'$.

	\end{proof}

	\begin{proof}[\textbf{Proof of Lemma \ref{lem detemine v,w}}]
		Let $a=\widetilde{\iota_{7}}^{43}$, $b=j^s_{0}j_1\nu_4$, $c=j^s_{0}j_1\Sigma\nu'$, $d=j^s_{0}j_2\nu'\eta_6$, $e=j^s_{0}[j_1,j_2]\eta_6$.
		Let $w=2^{\alpha}w'$, where $2\nmid w'$; $v=2^{\beta}v'$, where $2\nmid w', 2\nmid v'$. By (\ref{Coker (partial0s)7ast vw}),
		\begin{align}
			&H^s\cong \frac{\Z_{(2)}\{a,b,c,d,e\}}{L'_s}\nonumber\\
			&L'_{s}=\left \langle  2^{s+1}b-2^{s}c, 2^{2s}b-2^{s\!-\!1}(2^s-1)c, 2^sa+2^{\beta}b+2^{\alpha}c+u_2d+z_2e, 2^{s+1}b, 4c,2d,2e\right \rangle \nonumber\\
			&=\left \langle  2^{s+1}b, 2^{\alpha_s}c, 2^sa+2^{\beta}b+2^{\alpha}c+u_2d+z_2e,2d,2e\right \rangle \nonumber
		\end{align}
		Note that a $\Z_{(2)}$-linear isomorphism of  $ \Z_{(2)}\{a,b,c\}$ dose not change  the group structure of $H^s$.
		
		Assume $u'=1\in \Z_2$, then
		\begin{align}
			&L'_{s}=\left \langle  2^{s+1}b, 2^{\alpha_s}c, 2^sa+2^{\beta}b+2^{\alpha}c+d+z_2e,2d,2e\right \rangle \nonumber\\
			&=\left \langle  2^{s+1}b, 2^{\alpha_s}c, 2^sa+2^{\beta}b+2^{\alpha}c+d+z_2e,2^{s+1}a+2^{\beta+1}b+2^{\alpha+1}c,2e\right \rangle \nonumber\\
			&H^s\cong \frac{\Z_{(2)}\{a,b,c,e\}}{ \left \langle  2^{s+1}b, 2^{\alpha_s}c,2^{s+1}a+2^{\beta+1}b+2^{\alpha+1}c,2e\right \rangle }\nonumber
		\end{align}
		which  has at most four (resp. three) cyclic direct summands for $s\geq 2$ (resp. $s=1$) and contradicts to (\ref{H^s}). Thus $u'=0$. By the same argument, we get $z=0$ and $\beta\geq 1$, $\alpha\geq1$ when $s\geq 2$. So we get
		
		$H^s\cong \Z_2\{d\}\oplus \Z_2\{e\}\oplus \frac{\Z_{(2)}\{a,b,c\}}{L_s}$ with
		$L_{s}=\left \langle 2^{\alpha_s}c, 2^sa+2^{\beta}b+2^{\alpha}c, 2^{s+1}b\right \rangle $.
		
		If $\alpha< \alpha_s$, then $\alpha_s=2$, $s\geq 2$,  $\alpha= 1$.
		
		$L_{s}=\left \langle 2^{s+1}a+2^{\beta+1}b, 2^sa+2^{\beta}b+2c, 2^{s+1}b\right \rangle$.
		
		$\Z_{2^{s+1}}\oplus \Z_{4}\oplus\Z_{2^s}\oplus \Z_2\oplus \Z_2\cong H^s\cong \Z_2\oplus\Z_2\oplus \Z_2 \oplus A $ for some group $A$, which is impossible.
		
		Hence $\alpha\geq \alpha_s$, i.e., $2^{\alpha_s}\mid w$. $L_{s}=\left \langle 2^{\alpha_s}c, 2^sa+2^{\beta}b, 2^{s+1}b\right \rangle $.
		
		If $\beta<s$, $L_{s}=\left \langle 2^{\alpha_s}c, 2^sa+2^{\beta}b, 2^{2s+1-\beta}a\right \rangle $.
		$H^s\cong \Z_2\oplus\Z_2\oplus \Z_{\alpha_s}\oplus A$ with exact sequence $0\rightarrow \Z_{2^{2s+1-\beta}}\rightarrow A\rightarrow \Z_{2^{\beta}}\rightarrow 0$. This is a contradiction since $A\cong \Z_{2^{s}}\oplus \Z_{2^{s+1}}$ is not a solution of above exact sequence.
		So $\beta\geq s$.  We complete the proof of Lemma \ref{lem detemine v,w}.
	\end{proof}
	
	\begin{proof}[\textbf{Proof of Lemma \ref{lem Cokerpartial rs1}}]
		$Coker(\gamma_{r,1}^{s})_{7\ast}= \Z_{2^{s+2}}\{j_1\nu_4\}\oplus \Z_4\{j_1\Sigma\nu'\}\oplus \Z_2\{j_2\nu'\eta_6\}$ in (\ref{Cokergammarse}).
		
		Simplify $a:=\widetilde{\rho_{r,1}^s}$, $b:=j_{S^6} \eta_6$, $c:=j_{r,1}^sj_1\nu_4$, $d:=j_{r,1}^sj_1\Sigma\nu'$, $e:=j_{r,1}^sj_2\nu'\eta_6$. From (\ref{partial7 j1nu5}),(\ref{partial7j^4Sigmanu'eta_7}),(\ref{partial7 j2^4nu4eta7}),
		(\ref{partial7r1s [j15j24]}), Lemma \ref{lem detemine x,y,t'} and Lemma \ref{lem k l}, we get
		\begin{align}
			&Coker(\partial_{r,1}^s)_{7\ast}=\frac{\Z_{(2)}\{a,b,c,d,e\}}{I_r^s}\nonumber.
		\end{align}
		\begin{align}
			&I_r^s\!\!=\!\!\left \langle 2b,2^{s\!+\!2}c,4d, 2e, 2^{2s}c\!-\!2^{s\!-\!1}(2^s\!\!-\!\!1)d\!+\!e, 2^{min\{r,\!s\}}ta\!+\!b\!+\!2^{min\{r,\!s\}}k'c\!+\!2^{min\{s\!-\!1,1\}}l'd\!+\!ue, \epsilon_r e\right \rangle  \nonumber\\
			&=\!\!\langle 2^{min\{r+1,\!s+1\}}(ta\!+\!k'c)\!+2^{min\{s,2\}}l'd,2^{s\!+\!2}c,4d, 2e, 2^{2s}c\!-\!2^{s\!-\!1}(2^s\!\!-\!\!1)d\!+\!e,   \nonumber\\ &~~~~~~~~2^{min\{r,\!s\}}ta\!+\!b\!+\!2^{min\{r,\!s\}}k'c\!+\!2^{min\{s\!-\!1,1\}}l'd\!+\!ue, \epsilon_r e \rangle  \nonumber\\
			&Coker(\partial_{r,1}^s)_{7\ast}=\frac{\Z_{(2)}\{a,c,d,e\}}{I^{'s}_r} \nonumber\\
			&I^{'s}_r=\!\!\left \langle  2^{min\{r+1,\!s+1\}}(ta\!+k'c)\!+\!2^{min\{s,2\}}l'd,2^{s\!+\!2}c,4d, 2e, 2^{2s}c\!-\!2^{s\!-\!1}(2^s\!\!-\!\!1)d\!+\!e, \epsilon_r e\right \rangle  \nonumber\\
			&\text{since}~2^{min\{s,2\}}l'd\in \left \langle 2^{s\!+\!2}c,4d, 2e, 2^{2s}c\!-\!2^{s\!-\!1}(2^s\!\!-\!\!1)d\!+\!e\right \rangle, \nonumber\\
			&I^{'s}_r=\!\!\left \langle  2^{min\{r+1,\!s+1\}}(ta\!+\!k'c),2^{s\!+\!2}c,4d, 2e, 2^{2s}c\!-\!2^{s\!-\!1}(2^s\!\!-\!\!1)d\!+\!e, \epsilon_r e\right \rangle  \nonumber\\
			&Coker(\partial_{r,1}^s)_{7\ast} =\Z_{2^{min\{r\!+\!1,\!s+\!1\}}}\{a+\frac{k'}{t}c\}\oplus \frac{\Z_{(2)}\{c,d,e\}}{I^{''s}_r} \nonumber\\
			&I^{''s}_r=\left \langle 2^{s\!+\!2}c,4d, 2e, 2^{2s}c\!-\!2^{s\!-\!1}(2^s\!\!-\!\!1)d\!+\!e, \epsilon_r e\right \rangle\nonumber.
		\end{align}
		For $\infty\geq r\geq 2$
		\begin{align}
			&I^{''s}_r=\left \langle 2^{s\!+\!2}c,4d, 2^{2s\!+\!1}c\!-\!2^{s}(2^s\!\!-\!\!1)d, 2^{2s}c\!-\!2^{s\!-\!1}(2^s\!\!-\!\!1)d\!+\!e\right \rangle\nonumber\\
			&=\left \langle 2^{s\!+\!2}c,2^{min\{s,2\}}d, 2^{2s}c\!-\!2^{s\!-\!1}(2^s\!\!-\!\!1)d\!+\!e\right \rangle\nonumber\\
			&Coker(\partial_{r,1}^s)_{7\ast} \cong\Z_{2^{min\{r\!+\!1,\!s+\!1\}}}\oplus \frac{\Z_{(2)}\{c,d\}}{\left \langle 2^{s\!+\!2}c,2^{min\{s,2\}}d\right \rangle } \nonumber\\
			&\cong \Z_{2^{min\{r\!+\!1,\!s+\!1\}}}\oplus \Z_{2^{min\{s-\epsilon_r,2\}}}\oplus \Z_{2^{s+2}}.\nonumber
		\end{align}
		For $r=1$,
		\begin{align}
			&I^{''s}_1=\!\!\left \langle  2^{s\!+\!2}c,4d, 2^{2s}c\!-\!2^{s\!-\!1}(2^s\!\!-\!\!1)d, e\right \rangle
			=\left\{\!\!\!
			\begin{array}{ll}
				\left \langle  8c, 4c\!-\!d, e\right \rangle , & \hbox{$s=1$;} \\
				\left \langle  2^{s\!+\!2}c,2^{min\{2,s-1\}}d, e\right \rangle , & \hbox{$\infty\geq s\geq 2$.}
			\end{array}
			\right.
			\nonumber\\
			&\Rightarrow  Coker(\partial_{1,1}^s)_{7\ast}\cong
			\Z_4\oplus \Z_{2^{s+2}}\oplus \Z_{2^{min\{2,s-1\}}}, \infty\geq s\geq 1.\nonumber
		\end{align}
		
		We complete the proof of Lemma \ref{lem Cokerpartial rs1}.
	\end{proof}

\end{appendix}

\end{document}